\newtheorem{theorem}{Theorem}[section]
\theoremstyle{definition}
\newtheorem{definition}[theorem]{Definition}
\newtheorem{example}[theorem]{Example}
\newtheorem{proposition}[theorem]{Proposition}
\newtheorem{corollary}[theorem]{Corollary}
\theoremstyle{remark}
\newtheorem{remark}[theorem]{Remark}
\numberwithin{equation}{section}
\begin{document}

\title{Engineering and Business Applications of Sum of Squares Polynomials}

\author{Georgina Hall}
\address{Department of Decision Sciences, INSEAD, Fontainebleau, 77300, France}
\email{georgina.hall@insead.edu}


\subjclass[2000]{49-06,49-02,90C22,90C90}


\keywords{Sum of squares optimization, engineering applications, dynamical systems and control, moment problems, option pricing, shape-constrained regression, optimal design, polynomial games, copositive and robust semidefinite optimization}

\begin{abstract}
Optimizing over the cone of nonnegative polynomials, and its dual counterpart, optimizing over the space of moments that admit a representing measure, are fundamental problems that appear in many different applications from engineering and computational mathematics to business. In this paper, we review a number of these applications. These include, but are not limited to, problems in control (e.g., formal safety verification), finance (e.g., option pricing), statistics and machine learning (e.g., shape-constrained regression and optimal design), and game theory (e.g., Nash equilibria computation in polynomial games). We then show how sum of squares techniques can be used to tackle these problems, which are hard to solve in general. We conclude by highlighting some directions that could be pursued to further disseminate sum of squares techniques within more applied fields. Among other things, we briefly address the current challenge that scalability represents for optimization problems that involve sum of squares polynomials and discuss recent trends in software development.
\end{abstract}

\maketitle

A variety of applications in engineering, computational mathematics, and business can be cast as optimization problems over the cone of nonnegative polynomials or the cone of moments admitting a representing measure. For a long while, these problems were thought to be intractable until the advent, in the 2000s, of techniques based on sum of squares (sos) optimization. The goal of this paper is to provide concrete examples---from a wide variety of fields---of settings where these techniques can be used and detailed explanations as to how to use them. The paper is divided into four parts, each corresponding to a broad area of interest: Part 1 covers control and dynamical systems, Part 2 covers probability and measure theory, Part 3 covers statistics and machine learning, and Part 4 covers optimization and game theory. Each part is further subdivided into sections, which correspond to specific problems within the broader area such as, for Part 1, certifying properties of a polynomial dynamical system. Each section is purposefully written so that limited knowledge of the application field is needed. Consequently, a large part of each section is spent on providing a mathematical framework for the field and couching the question of interest as an optimization problem involving nonnegative polynomials and/or moment constraints. A shorter part explains how to go from these (intractable) problems to (computationally-tractable) sos programs. The conclusion of this paper briefly touches upon some implementation challenges faced by sum of squares optimization and the subsequent research effort developed to counter them.

\part{Dynamical systems and control}\index{dynamical systems} \index{control}

A dynamical system is a system whose state varies over time. Broadly speaking, a \emph{state} is a vector $x(t) \in \mathbb{R}^n$ that provides enough information on the system at time $t$ for one to predict future values of the state if the system is left to its own devices. For example, if we consider a physical system such as a rolling ball, then one could, e.g., consider the position and instantaneous velocity of its center as a 6-dimensional state vector. Or, if the problem at hand is a study of the evolution of the population of wolves and sheep in a certain geographical region, the state vector of a simple model could simply encompass the current number of wolves and sheep in that region.

As the state vector contains enough information that one can predict its evolution if there is no outside interference, it is possible to relate future states back to the current state via so-called \emph{state equations}. Their expression varies depending on whether the system is \emph{discrete time} or \emph{continuous time}. In a discrete-time system, the state $x(k)$ is defined for discrete times $k=0,1,2,\ldots$, and we have
\begin{align}\label{eq:discrete.time}
x(k+1)=f(x(k)),
\end{align}
where $f$ is some function from $\mathbb{R}^n$ to $\mathbb{R}^n$. In a continuous-time system, the state $x(t)$ varies continuously with time $t\geq 0$ and we have
\begin{align}\label{eq:cont.time}
\frac{d x(t)}{dt}=f(x(t)),
\end{align}
where $f$ is again some function from $\mathbb{R}^n$ to $\mathbb{R}^n$. The goal is generally to understand how the \emph{trajectory} $\{x(k)\}_k$, solution to (\ref{eq:discrete.time}), or $t \mapsto x(t)$, solution to (\ref{eq:cont.time}), behaves over time. Sometimes such solutions can be computed explicitly and then it is easy to infer their behavior: this is the case for example when $f$ is linear, that is, when $f(x)=Ax$ for some $n \times n$ matrix $A$; see, e.g., \cite{linear}. However, when $f$ is more complex, computing closed-form solutions to (\ref{eq:discrete.time}) or (\ref{eq:cont.time}) can be hard, even impossible, to do. The goal is then to get insights as to different properties of the trajectories without ever having to explicitly compute them. For example, it may be enough to know that the ball we were considering earlier avoids a certain puddle, or that our wolf population always stays within a certain range. This is where sum of squares polynomials come into play---as algebraic certificates of properties of dynamical systems. In Sections \ref{sec:stab} and \ref{sec:col.avoid}, we will see how we can certify \emph{stability} and \emph{collision avoidance} of polynomial dynamical systems (i.e., dynamical systems as in (\ref{eq:discrete.time}) and (\ref{eq:cont.time}) where $f$ is a polynomial) using sum of squares. Other properties such as ``invariance'' or ``reachibility'' can be certified using sum of squares as well but are not covered here.

We will also review more complex models than what is given in (\ref{eq:discrete.time}) and (\ref{eq:cont.time}). For example, we have assumed here that our dynamical system is \emph{autonomous}. This means that the function $f$ only depends on $x(t)$ or $x(k)$. But this need not be the case. The function $f$ could also depend on, say, an external input $u(t) \in \mathbb{R}^p$. This is a well-studied class of dynamical systems and the vector $u(t)$ is termed a \emph{control}. We will briefly touch upon an example of such a system  in Section \ref{sec:stab}. Another alternative to (\ref{eq:discrete.time}) and (\ref{eq:cont.time}) could be a direct dependency of $f$ on time on top of its dependency on $x(t)$ or $x(k)$. In this case, such a system is called \emph{time-varying}. We will see an example of such a system in Section \ref{sec:switched}. In its most general setting, $f$ can be a function of all three: time, state, and control, but we do not cover problems of this type in their full generality here; see, e.g., \cite{Khalil:3rd.Ed} for more information.

\section{Certifying properties of a polynomial dynamical system} \index{polynomial dynamical system} \index{Lyapunov functions}
In this section, we consider a continuous-time polynomial dynamical system:
\begin{align} \label{eq:ode.stability}
\dot{x}=f(x),
\end{align}
where $\dot{x}$ is the derivative of $x(t)$ with respect to $t$ and $f:\mathbb{R}^n \rightarrow \mathbb{R}^n$ is a vector field, every component of which is a (multivariate) polynomial.

\subsection{Stability}\label{sec:stab} \index{Stability of a polynomial dynamical system}

Let $\bar{x}$ be an equilibrium point of (\ref{eq:ode.stability}), that is a point such that $f(\bar{x})=0$. Note that by virtue of the definition, any system that is initialized at its equilibrium point will remain there indefinitely. For convenience, we will assume that the equilibrium point is the origin. This is without loss of generality as we can always bring ourselves back to this case by performing a change of variables $y=x-\bar{x}$ in (\ref{eq:ode.stability}). Our goal is to study how the system behaves around its equilibrium point.

\begin{definition}
	The equilibrium point $\bar{x}=0$ of (\ref{eq:ode.stability}) is said to be \emph{stable} if, for every $\epsilon>0$, there exists $\delta(\epsilon)=\delta>0$ such that $$||x(0)||<\delta \Rightarrow ||x(t)||<\epsilon, ~\forall t\geq 0.$$
\end{definition}
This notion of stability is sometimes referred to as stability in the sense of Lyapunov, in honor of the Russian mathematician Aleksandr Lyapunov (1857-1918).

Intuitively, this notion of stability corresponds to what we would expect it to be: 
there always exists a ball around the equilibrium point from which trajectories can start with the guarantee that they will remain close to the equilibrium in the future, where the notion of ``close'' can be prescribed.

\begin{definition}
	The equilibrium point $\bar{x}=0$ of (\ref{eq:ode.stability}) is said to be \emph{locally asymptotically stable} if it is stable and if there exists $\delta'$ such that $$||x(0)||<\delta' \Rightarrow \lim_{t \rightarrow \infty} x(t)=0.$$ 
\end{definition}

\begin{definition}
	The equilibrium point $\bar{x}=0$ of (\ref{eq:ode.stability}) is said to be \emph{globally asymptotically stable} (GAS) if it is stable and if, $\forall x(0) \in \mathbb{R}^n$,  $\lim_{t \rightarrow \infty} x(t)=0.$
\end{definition}

We will focus on how one can certify \emph{global} asymptotic stability of an equilibrium point in the rest of this section. Analogous results to the ones discussed here exist for both stability and local asymptotic stability and can be found in \cite[Chapter 4]{Khalil:3rd.Ed}. The key notion that is used here is that of a \emph{Lyapunov function}, developed by Lyapunov in his thesis \cite{PhD:Lyapunov}. The theorem we give below appears in, e.g., \cite{Khalil:3rd.Ed}. It uses the notation $\nabla g(x)$ for the gradient of a function $g:\mathbb{R}^n \rightarrow \mathbb{R}$.
\begin{theorem}\label{th:lyap}
	Let $\bar{x}=0$ be an equilibrium point for (\ref{eq:ode.stability}). If there exists a continuously differentiable function $V:\mathbb{R}^n \rightarrow \mathbb{R}$ such that
	\begin{enumerate}[(i)]
		\item $V$ is radially unbounded, i.e., $||x|| \rightarrow \infty \Rightarrow V(x) \rightarrow \infty$
		\item $V$ is positive definite, i.e., $V(x)>0, \forall x\neq 0$ and $V(0)=0$
		\item $\dot{V}(x)\mathrel{\mathop{:}}=\nabla V(x)^T f(x)<0$ for all $x\neq 0$ and $\dot{V}(0)=0$
	\end{enumerate}
	then $\bar{x}$ is globally asymptotically stable.
\end{theorem}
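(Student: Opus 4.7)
The plan is to split global asymptotic stability into its two components: stability in the sense of Lyapunov, followed by convergence of every trajectory to the origin. Radial unboundedness of $V$ will be the crucial ingredient that upgrades what would otherwise be a purely local argument into a global one.

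First, for stability, I would fix $\epsilon>0$ and consider the sphere $S_\epsilon=\{x:\|x\|=\epsilon\}$. By (ii) and compactness of $S_\epsilon$, the quantity $m:=\min_{x\in S_\epsilon}V(x)$ is strictly positive. Continuity of $V$ at the origin together with $V(0)=0$ then furnishes a $\delta>0$ for which $\|x\|<\delta$ implies $V(x)<m$. Along any trajectory with $\|x(0)\|<\delta$, the chain rule gives $\frac{d}{dt}V(x(t))=\nabla V(x(t))^T f(x(t))=\dot V(x(t))\le 0$ by (iii), so $V(x(t))<m$ for all $t\ge 0$. Since $V\ge m$ on $S_\epsilon$, continuity of $t\mapsto x(t)$ prevents the trajectory from crossing $S_\epsilon$, which yields $\|x(t)\|<\epsilon$ for every $t\ge 0$.

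Next, for global convergence, I would fix an arbitrary initial condition $x(0)\in\mathbb{R}^n$ and set $c_0:=V(x(0))$. The sublevel set $\Omega_{c_0}=\{x:V(x)\le c_0\}$ is closed (by continuity of $V$) and, thanks to (i), bounded: otherwise one could find a sequence $x_k$ with $\|x_k\|\to\infty$ and $V(x_k)\le c_0$, contradicting radial unboundedness. Because $V$ is non-increasing along trajectories, $\Omega_{c_0}$ is positively invariant, so $x(t)$ remains in this compact set for all $t\ge 0$; in particular the trajectory is defined for all forward time. The real-valued function $t\mapsto V(x(t))$ is then monotone non-increasing and bounded below by $0$, hence converges to some limit $c\ge 0$.

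The main obstacle is pinning down that this limit is $c=0$. I would argue by contradiction: if $c>0$, the trajectory lies for all $t\ge 0$ in the compact annular set $A=\{x\in\Omega_{c_0}:V(x)\ge c\}$, which excludes the origin. By (iii), the continuous function $-\dot V$ is strictly positive on $A$, so it attains a positive minimum $\alpha>0$ there. Integrating the inequality $\dot V(x(t))\le -\alpha$ yields $V(x(t))\le c_0-\alpha t$, which becomes negative in finite time, contradicting $V\ge 0$. Hence $c=0$, and combining positive definiteness of $V$ with radial unboundedness (so that the sublevel sets $\{V\le\eta\}$ shrink down to the origin as $\eta\downarrow 0$) forces $x(t)\to 0$, completing the proof.
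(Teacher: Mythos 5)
The paper does not include its own proof of this theorem---it explicitly defers to \cite[Chapter 4]{Khalil:3rd.Ed}---and your argument is precisely the standard one found there: stability via the positive minimum of $V$ on a sphere, compactness and positive invariance of sublevel sets via radial unboundedness, and convergence of $V(x(t))$ to $0$ by integrating a uniform negative bound on $\dot V$ over the compact annulus. The proof is correct; the only cosmetic fix is to replace $\delta$ by $\min(\delta,\epsilon)$ in the stability step so that the initial condition is guaranteed to lie inside the ball of radius $\epsilon$ to begin with.
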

Such a function $V$ is called a Lyapunov function and can be viewed as the generalization of an energy function. The function $\dot{V}$ is the derivative of $V$ with respect to its trajectory as it is equal to $\frac{d}{dt} V(x(t))$ where $x(t)$ is a solution to (\ref{eq:ode.stability}). The proof of the theorem is omitted but can be found in \cite[Chapter 4]{Khalil:3rd.Ed}. 

This theorem states a sufficient condition for the equilibrium point to be GAS. Is it the case that whenever the system is GAS, such a Lyapunov function exists? These type of questions give rise to what is known as \emph{converse Lyapunov theorems}. The one given below comes from \cite{kurzweil1956inversion} but this precise formulation appears in \cite{Bacciotti.Rosier.Liapunov.Book}.

\begin{theorem}\label{th:converse}
	Let $f$ be continuous. If $\bar{x}=0$ is globally asymptotically stable for (\ref{eq:ode.stability}) then there exists an $\infty$-differentiable function $V:\mathbb{R}^{n}\rightarrow \mathbb{R}$ satisfying properties (i)-(iii) of Theorem \ref{th:lyap}.
\end{theorem}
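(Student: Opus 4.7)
The plan is to construct $V$ in two stages: first build a (merely continuous) Lyapunov function directly from the GAS hypothesis, then mollify it into a $C^\infty$ function without destroying properties (i)--(iii). Throughout, since $f$ is only continuous, solutions to (1.1.3) exist by Peano but need not be unique; any quantity involving the trajectory $\phi(t,x)$ should therefore be interpreted robustly (e.g., as a supremum over all solutions), so that the construction is well-defined.

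First, I would convert the GAS property into a quantitative decay estimate. A standard comparison argument (see, e.g., \cite[Lemma 4.5]{Khalil:3rd.Ed}) shows that GAS of the origin implies the existence of a class-$\mathcal{KL}$ function $\beta$ such that every solution $x(t)$ satisfies $\|x(t)\| \leq \beta(\|x(0)\|, t)$ for all $t \geq 0$. This bound is the analytic substitute for the explicit formulas available in the linear case and is what makes the subsequent integrals finite.

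Next, I would define a preliminary Lyapunov candidate in the spirit of Yoshizawa/Massera, for instance
\begin{equation*}
V_0(x) \;=\; \int_0^\infty \alpha\bigl(\|\phi(t,x)\|\bigr)\,dt,
\end{equation*}
where $\alpha$ is a class-$\mathcal{K}$ function (e.g.\ $\alpha(r)=r^2/(1+r^2)$, possibly combined with a time weight) chosen so that the integral converges for every $x$ thanks to the $\mathcal{KL}$ bound from the first step. One then checks that $V_0$ is positive definite, radially unbounded, and strictly decreasing along trajectories; the decrease property, a priori only in the sense of Dini derivatives, already gives a weak form of (iii).

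The main obstacle is the third step: upgrading $V_0$ to a $C^\infty$ function. Under the stated hypotheses $V_0$ inherits only the regularity of the flow, which may be merely continuous. Following Kurzweil \cite{kurzweil1956inversion}, I would convolve $V_0$ against a smooth bump kernel $\rho_{\varepsilon(x)}$ whose support radius $\varepsilon(x)$ is allowed to depend on $x$, shrinking as $\|x\|\to 0$ and as $\|x\|\to\infty$. A partition-of-unity argument then glues these local mollifications into a globally $C^\infty$ function $V$. The delicate point is choosing the profile $\varepsilon(\cdot)$ so that all three target properties survive simultaneously: one must avoid over-smoothing near the origin (which would destroy positive definiteness and the vanishing of $V$ at $0$) and avoid over-smoothing at infinity (which would destroy the strict sign of $\dot V$). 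Once this balance is achieved, verification of (i)--(iii) for the mollified $V$ reduces to straightforward estimates using the class-$\mathcal{KL}$ bound from the first step.
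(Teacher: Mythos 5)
The paper does not actually prove this theorem: it is stated as a cited result, with the proof deferred to \cite{kurzweil1956inversion} and the formulation taken from \cite{Bacciotti.Rosier.Liapunov.Book}. So there is no in-paper argument to compare against; what can be assessed is whether your sketch is a viable outline of the known proof. The overall strategy you describe---a class-$\mathcal{KL}$ estimate, a Massera-type integral $V_0(x)=\int_0^\infty \alpha(\|\phi(t,x)\|)\,dt$, then smoothing---is indeed the standard skeleton, and you correctly flag non-uniqueness of solutions as the central complication. But two of your steps have concrete gaps, and they sit exactly where the difficulty of Kurzweil's theorem lives.

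First, the convergence of the integral. The $\mathcal{KL}$ bound gives no decay \emph{rate}: $\beta(r,\cdot)$ may tend to zero arbitrarily slowly, so for a generic choice such as $\alpha(r)=r^2/(1+r^2)$ the integrand can decay like $1/t$ or slower and the integral diverges. One genuinely needs Massera's lemma to manufacture a class-$\mathcal{K}$ function $\alpha$ \emph{adapted to} $\beta$ so that $\int_0^\infty \alpha(\beta(r,t))\,dt<\infty$; this is a real ingredient, not a tunable example. The ``time weight'' escape hatch does not save you: inserting a decreasing integrable weight $k(t)$ forces convergence but flips the sign of the extra term in $\frac{d}{ds}V_0(\phi(s,x))$, so the strict decrease in (iii) is lost. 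Second, and more seriously, defining $V_0$ as a supremum over all solutions (necessary since $f$ is only continuous) does not yield a continuous function in general: the solution set depends only upper semicontinuously on the initial condition, so $V_0$ is a priori only upper semicontinuous. Without continuity, the mollification step does not converge to something satisfying (ii)--(iii), and the variable-radius convolution cannot be controlled. Handling this---either by Kurzweil's original intricate construction or by the modern route through robustness of GAS to perturbations (Lin--Sontag--Wang, Clarke--Ledyaev--Stern)---is the actual content of the theorem, and your sketch compresses it into ``straightforward estimates.'' The architecture is right, but the proof as proposed does not close.
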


Similar theorems to Theorem \ref{th:converse} exist for stability and local asymptotic stability; see \cite{Bacciotti.Rosier.Liapunov.Book}. Theorems such as these do not help us however to explicitly compute a Lyapunov function $V$, as they do not give a tractable construction of its existence. This is where sum of squares techniques come in useful as we see now. So the techniques can be directly applied, we restrict ourselves to polynomial Lyapunov functions of a certain degree. In practice, this does not seem too restrictive a choice in the context of polynomial dynamical systems and enables a finite parametrization of the Lyapunov functions by their coefficients.  A disadvantage however to such a restriction is that, while a $C^\infty$ Lyapunov function was guaranteed to exist for a polynomial dynamical system, a polynomial Lyapunov function is not. In fact, examples of polynomial dynamical systems that do not have polynomial Lyapunov functions exist as can be seen below.

\begin{theorem}\cite{AAA_MK_PP_CDC11_no_Poly_Lyap}
	Consider the polynomial vector field
	\begin{equation}\label{eq:CE_stab}
	\begin{aligned}
	\dot{x}&=-x+xy\\
	\dot{y}&=-y.
	\end{aligned}
	\end{equation}
	The origin is a globally asymptotically stable equilibrium point, but the system does not admit a polynomial Lyapunov function.
\end{theorem}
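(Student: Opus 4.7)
The plan is to handle the two claims separately. For global asymptotic stability, I would exploit the fact that the second equation $\dot{y} = -y$ decouples and yields $y(t) = y(0)e^{-t}$. Substituting this into the first equation gives a scalar linear ODE $\dot{x} = (y(0)e^{-t}-1)x$, which integrates to $x(t) = x(0)\exp\bigl(y(0)(1-e^{-t}) - t\bigr)$. From these closed-form expressions both coordinates tend to $0$ as $t \to \infty$ for every initial condition, establishing global attractivity; for stability in the sense of Lyapunov, one observes that the exponent satisfies $y(0)(1-e^{-t}) - t \le |y(0)|$ for all $t \ge 0$, so $|x(t)| \le |x(0)|\, e^{|y(0)|}$, which gives a straightforward $\delta$--$\epsilon$ estimate.

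The second and more interesting claim is the non-existence of a polynomial Lyapunov function, and I would prove it by contradiction. Assume $V:\mathbb{R}^2 \to \mathbb{R}$ is a polynomial Lyapunov function of total degree $d$ satisfying conditions (i)--(iii) of Theorem \ref{th:lyap}. Since $\dot{V}<0$ away from the origin, $V$ is strictly decreasing along any non-trivial trajectory, and in particular $V(x(t),y(t)) \le V(x(0),y(0))$ for all $t \ge 0$. The strategy is to exhibit a family of initial conditions indexed by a large parameter $R$ for which the right-hand side grows only polynomially in $R$, while the left-hand side, evaluated at a well-chosen time $t^*(R)$, grows exponentially in $R$.

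Concretely, I would fix $(x(0),y(0)) = (1,R)$ with $R > 1$ and take $t^* = \log R$. The closed-form solution then gives $y(t^*) = 1$ and $x(t^*) = e^{R-1}/R$, which is exponentially large in $R$. The upper bound side is routine: since $V$ has total degree $d$, one has $V(1,R) \le C_1 R^d$ for some constant $C_1$ and all sufficiently large $R$. For the lower bound, I would apply radial unboundedness to the univariate slice $x \mapsto V(x,1)$: this polynomial tends to $+\infty$ as $|x| \to \infty$, so it has even degree $2k \ge 2$ with strictly positive leading coefficient $c$, hence $V(x,1) \ge (c/2)|x|^{2k}$ for $|x|$ large. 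Plugging in $x = e^{R-1}/R$ yields a lower bound that is exponentially large in $R$, contradicting the polynomial upper bound $C_1 R^d$ for $R$ large enough.

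The main obstacle, as I see it, lies in pinning down the lower bound on $V(x,1)$: one must ensure that the leading term in $x$ has genuinely positive degree with a positive leading coefficient. This is precisely where the radial unboundedness hypothesis (i)---as opposed to mere positive definiteness---is essential, since it forces $V(x,1) \to \infty$ along the slice. Once the degree $2k \ge 2$ is secured, the exponential-versus-polynomial growth mismatch delivers the contradiction cleanly.
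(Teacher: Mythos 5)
Your proof is correct, and it is worth noting that the paper itself does not prove this theorem: it only records that the proof in \cite{AAA_MK_PP_CDC11_no_Poly_Lyap} establishes GAS via the non-polynomial Lyapunov function $V(x,y)=\ln(1+x^2)+y^2$ and rules out polynomial Lyapunov functions ``due to the exponential growth rates of the trajectories.'' Your argument realizes exactly that second idea, made quantitative: the trajectory from $(1,R)$ reaches $(e^{R-1}/R,\,1)$ at time $t^*=\log R$, so monotonicity of $V$ along trajectories pits an exponential lower bound (from the slice $x\mapsto V(x,1)$) against a polynomial upper bound $V(1,R)\le C_1R^d$. Where you genuinely diverge is on the stability half: instead of exhibiting the logarithmic Lyapunov function, you integrate the triangular system explicitly and read off both attractivity and the $\delta$--$\epsilon$ estimate from $|x(t)|\le |x(0)|e^{|y(0)|}$. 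That is more elementary and self-contained here (the system happens to be solvable in closed form), whereas the Lyapunov-function route is the one that generalizes to systems without explicit solutions; conveniently, your closed-form solution is also the input to the non-existence argument, so nothing is wasted. One caveat to keep in view: your contradiction leans on radial unboundedness (condition (i)) to guarantee that $x\mapsto V(x,1)$ has even degree $\ge 2$ with positive leading coefficient — as you note, and as the polynomial in (\ref{eq:CE.Jeff}) illustrates, positive definiteness alone does not force this. So what you have proved is that no polynomial satisfies (i)--(iii) of Theorem \ref{th:lyap}, which is precisely what ``polynomial Lyapunov function'' means in this paper; ruling out merely positive definite polynomials with negative definite derivative would require a refinement of the slice argument, which is the business of the cited reference rather than of this text.
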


The proof of this theorem is omitted here but can be found in \cite{AAA_MK_PP_CDC11_no_Poly_Lyap}. The crux of it relies on showing global asymptotic stability of the origin by producing a (non polynomial) Lyapunov function $V(x,y)=\ln(1+x^2)+y^2$, and then showing that no polynomial Lyapunov function could exist due to the exponential growth rates of the trajectories (see Figure \ref{fig:no.poly.Lyap}). 
\begin{figure}[H]
	\centering
	\includegraphics[scale=0.25]{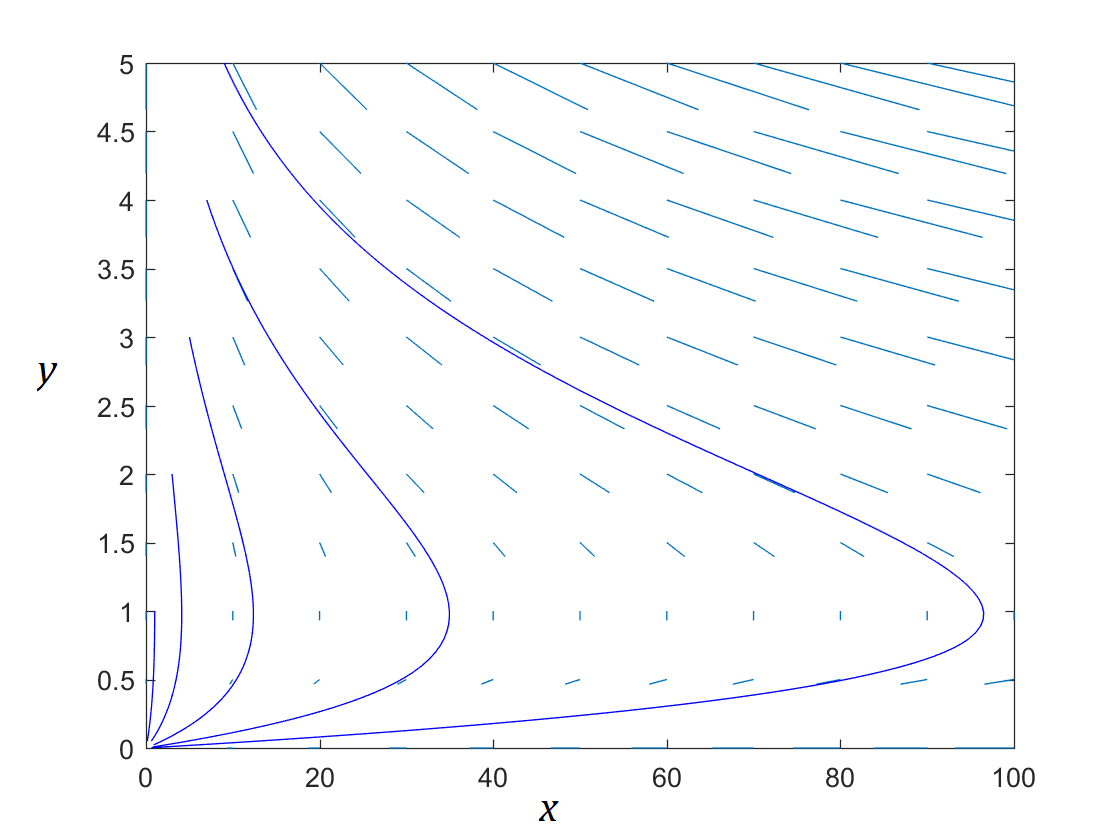}
	\caption{Representation of the polynomial vector field given in (\ref{eq:CE_stab}) with some trajectories}
	\label{fig:no.poly.Lyap}
\end{figure}

Though this result is negative in nature, it is worth noting that some positive results do exist. In particular, in \cite{Peet.exp.stability} it is shown that \emph{exponentially stable} polynomial dynamical systems always have polynomial Lyapunov functions on compact sets (we do not define exponential stability here but, at a high level, it is a stronger notion than asymptotic stability as it requires rates of convergence of trajectories to the equilibrium point rather than simply convergence).

Restricting ourselves to polynomials is a step in the right direction for making the problem of searching for Lyapunov functions computationally tractable. It is not enough however. Indeed, searching for polynomial Lyapunov functions that verify (i)-(iii) is a hard problem as it requires constraining polynomials to be positive on $\mathbb{R}^n$, a problem that we know to be hard for degree-4 polynomials already~\cite{nonnegativity_NP_hard}. As expected, this is where sum of squares polynomials come into play. A few references on the use of sum of squares optimization in showing asymptotic stability of a polynomial system include \cite{PhD:Parrilo,PositivePolyInControlBook,PapP02}. We present a condensed version of these references below.

\begin{definition}\label{def:sos.Lyap.fct}
	A polynomial function $V$ is a \emph{sum of squares Lyapunov function} for the polynomial system in (\ref{eq:ode.stability}) if it vanishes at the origin and satisfies
	\begin{enumerate}[(i')]
		\item $V$ is sos
		\item $-\dot{V}$ is sos.
	\end{enumerate}
\end{definition}
Note that as $V(0)=0$ and $V$ is sos, the constant and linear terms of $V$ must be zero. It is clear that requiring $V$ to be sos and $-\dot{V}$ to be sos implies that they will be nonnegative. This is not however what is required in Theorem~\ref{th:lyap}: there, $V$ and $-\dot{V}$ need to be positive definite. Furthermore, $V$ has to be radially unbounded. How can positive definiteness and radial unboundedness be enforced in practice? One suggestion to enforce positive definiteness of $V$ and $-\dot{V}$ is given in \cite[Proposition 5]{papachristodoulou2005tutorial}, that we repeat here.

\begin{proposition}
	Given a polynomial $V(x)$ of degree $2d$, let $\phi_{\epsilon}(x)=\sum_{i=1}^n \sum_{j=1}^d \epsilon_{ij}x_i^{2j}$ where $\epsilon_{ij}\geq 0$ for all $i$ and $j$ and $$\sum_{j=1}^d \epsilon_{ij}>\gamma, \text{ for all } i=1,\ldots,n,$$ when $\gamma$ is some fixed positive constant. If there exist some $\epsilon=(\epsilon_{ij})_{ij}$ verifying the previous conditions and if $V-\phi_{\epsilon}$ is sos, then it follows that $V$ is positive definite.
\end{proposition}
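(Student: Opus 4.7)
The whole proof rests on a single comparison: since $V-\phi_{\epsilon}$ is sos, it is nonnegative on all of $\mathbb{R}^n$, so $V(x)\geq \phi_{\epsilon}(x)$ pointwise. Consequently the plan is to replace the question about $V$ by the same question about the simpler auxiliary polynomial $\phi_\epsilon$ and to check the latter directly from its definition, using only the constraints on the $\epsilon_{ij}$.

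To show $\phi_{\epsilon}(x)>0$ for every $x\neq 0$, I would fix such an $x$ and pick a coordinate $i_0$ with $x_{i_0}\neq 0$. The hypothesis $\sum_{j=1}^{d}\epsilon_{i_0 j}>\gamma>0$ forces at least one coefficient $\epsilon_{i_0 j_0}$ to be strictly positive. Since $x_{i_0}^{2 j_0}>0$ and every remaining summand $\epsilon_{ij}x_i^{2j}$ in $\phi_{\epsilon}(x)=\sum_{i=1}^{n}\sum_{j=1}^{d}\epsilon_{ij}x_i^{2j}$ is nonnegative (by $\epsilon_{ij}\geq 0$ together with the even exponents), one gets
$$\phi_{\epsilon}(x)\geq \epsilon_{i_0 j_0}\,x_{i_0}^{2 j_0}>0.$$
Combined with $V(x)\geq \phi_{\epsilon}(x)$ this yields $V(x)>0$ for all $x\neq 0$. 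The complementary condition $V(0)=0$ is part of the standing setup for Lyapunov candidates, where one restricts the search to polynomials with zero constant and linear terms; equivalently it can be enforced as an extra linear equality on the coefficients of $V$ in the sos program.

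There is essentially no obstacle. The content of the proposition is just that the row-sum condition on $\epsilon$ is the minimal combinatorial requirement making $\phi_{\epsilon}$ strictly positive in every axis direction $x=t\,e_i$: one needs, along each such direction, at least one even-power monomial with a positive coefficient. The feature that matters for computation, and that I would underline in the write-up, is the choice of a prescribed constant $\gamma>0$ rather than a bare strict inequality: semidefinite programs accept only non-strict constraints, so the formulation $\sum_{j}\epsilon_{ij}\geq \gamma$ together with $\epsilon_{ij}\geq 0$ and $V-\phi_{\epsilon}\in\Sigma$ is exactly what can be appended as linear and semidefinite constraints when encoding the search for a sum of squares Lyapunov function.
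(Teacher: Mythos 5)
Your argument is correct: $V-\phi_{\epsilon}$ being sos gives $V\geq\phi_{\epsilon}$ pointwise, and the row-sum condition $\sum_{j}\epsilon_{ij}>\gamma>0$ together with $\epsilon_{ij}\geq 0$ guarantees that each nonzero $x$ picks up at least one strictly positive term $\epsilon_{i_0 j_0}x_{i_0}^{2j_0}$, so $\phi_{\epsilon}$ (hence $V$) is positive away from the origin, while $V(0)=0$ is part of the standing normalization for Lyapunov candidates. The paper itself states this proposition without proof (deferring to the cited reference), and your argument is the standard one, so there is nothing to reconcile.
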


In the case where $V$ is taken to be a homogeneous\footnote{A function $f:\mathbb{R}^n \rightarrow \mathbb{R}$ is said to be homogeneous of degree $d$ if $f(\lambda x)=\lambda^{d} f(x)$, for any scalar $\lambda$.} polynomial of degree $2d$, then one need only keep the monomials of degree $2d$ in $\phi_{\epsilon}(x)$. In other words, we constrain $V(x)-\sum_{i=1}^n \epsilon_i x_i^{2d}$ to be sos. 

For radial unboundedness, it is well known that a polynomial $V$ is radially unbounded if its top homogeneous component, i.e., the homogeneous polynomial formed by the collection of the highest order monomials of $V$, is positive definite. This can be enforced as described in the paragraph above.
%
%

In practice however, as discussed in \cite[page 41]{AAA_MS_Thesis}, these conditions are unwieldy and can usually be done away with. Indeed, finding a polynomial $V$ that satisfies conditions (i')-(ii') is a sum of squares program with no objective function. When solving programs of this type with interior point methods, the solution returned is in the interior of the feasible set, and hence will not vanish other than at the origin. This implies that in general, one would obtain polynomials $V$ that satisfy conditions (i)-(iii). This should be checked numerically however. For (i)-(ii), this can be done by checking the eigenvalues of the Gram matrices associated to $V$ and to $-\dot{V}$; for (iii), this can be done by checking the eigenvalues of the Gram matrix associated to the top homogeneous component of $V$. Note that it is not enough to check (i)-(ii): (iii) needs to be checked too. Indeed, there exist\footnote{Thank you to Jeffrey Zhang for finding this example!} polynomials that are zero at zero, positive everywhere else, and not radially unbounded, such as:
\begin{align}\label{eq:CE.Jeff}
p(x,y)=x^2y^2+4x^2y+5x^2+xy^2+2xy+0.25y^2.
\end{align}

%
%
%
\begin{remark}
Being a sum of squares polynomial is a sufficient, but not necessary, condition for being nonnegative. This doesn't imply however that conditions (i')-(ii') are much more conservative than (ii)-(iii) for polynomial $V$. Indeed, there may be many polynomials satisfying conditions (ii)-(iii), some of which not having a sum of squares certificate, but as long as one of them does, then (i')-(ii') should not be more conservative than (ii)-(iii) (with the technical details considered above in mind). 
This motivates the study of converse questions around the existence of \emph{sum of squares Lyapunov functions} if a polynomial Lyapunov function is known to exist. It is known that if a polynomial Lyapunov function of degree $2d$ exists, it does not follow that an sos Lyapunov function of degree $2d$ exists; see an example in \cite[Section 3.1]{AAA_PP_CDC11_converseSOS_Lyap}. The related question as to whether an sos Lyapunov function of higher degree exists if a polynomial Lyapunov function exists is open for general polynomial dynamical systems. When we restrict ourselves to homogeneous polynomial dynamical systems (i.e., when $f$ is homogeneous) the latter conclusion can be made, see \cite{AAA_PP_CDC11_converseSOS_Lyap}.
\end{remark}

\begin{example}
	As an illustrative example of what we have seen so far, we consider a model of a jet engine given in \cite{krstic1995nonlinear} and revisited in \cite{PabloGregRekha_BOOK}. The dynamics of the engine are given by
	\begin{equation}\label{eq:jet}
	\begin{aligned}
	\dot{x}&=-y-\frac{3}{2}x^2-\frac12 x^3\\
	\dot{y}&=3x-y.
	\end{aligned}
	\end{equation}
	We wish to show that the origin is globally asymptotically stable. Using MATLAB and the software package YALMIP\cite{yalmip}, we search for a polynomial Lyapunov function $V$ for this system satisfying (i') and (ii'). We start by capping the degree of $V$ at 2, then 4. The solver returns $V=0$ for degree 2 but a nonzero solution for degree 4. It is easy to check numerically that $V$ is positive definite and radially unbounded, and that $-\dot{V}$ is positive definite too. Hence, the origin is GAS for (\ref{eq:jet}). The vector field as well as trajectories of the system and level sets of $V$ are plotted in Figure \ref{fig:example.stab}.
\end{example}

\subsubsection{The specific case of linear systems.} In the particular case where the dynamical system is linear, that is 
\begin{align}\label{eq:lin.syst}
\dot{x}=Ax
\end{align}
where $A$ is an $n \times n$ matrix, the previous results simplify considerably. Indeed, $\bar{x}=0$ is a GAS equilibrium point for (\ref{eq:lin.syst}) if and only if a quadratic Lyapunov function $V$ exists. As $V$ is quadratic, it can be parametrized as $V(x)=x^TPx$ where $P$ is a symmetric $n \times n$ matrix. Enforcing conditions (i)-(iii) then simply amounts to searching for a matrix $P$ such that $$P \succ 0 \text{ and } A^TP+PA \succ 0.$$ The search for $P$ is a semidefinite program. If $\bar{x}$ is GAS then such a system will be feasible (see \cite[Chapter 5]{BoydLmiBook} and \cite[Section 2.2]{PabloGregRekha_BOOK} for the discrete-time case). In the language of sum of squares, one can say that GAS linear systems always admit a quadratic sum of squares Lyapunov function.

\begin{figure}[H]
	\centering
	\includegraphics[scale=0.25]{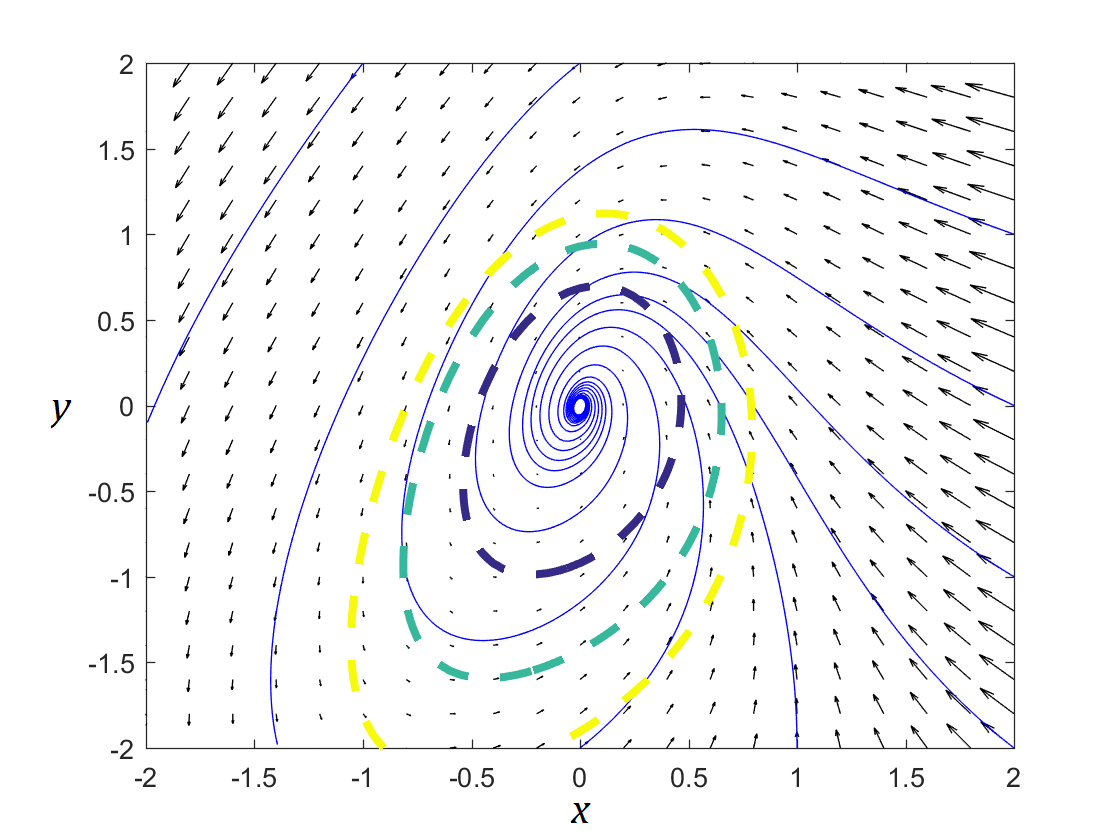}
	\caption{Plot of the vector field given in (\ref{eq:jet}) together with some trajectories (thin lines) and some level sets of an sos Lyapunov function of degree 4 (dashed thick lines) }
	\label{fig:example.stab}
\end{figure}

\subsubsection{Control.} So far, we have seen systems of the type $\dot{x}=f(x)$, i.e., autonomous dynamical systems. As mentioned briefly in the introduction, it can be the case that the dynamics depend on the state $x(t)$ but also on an external output $u(x(t))$, called a \emph{control}, i.e.
$$\dot{x}=f(x(t),u(x(t))).$$ 
One can study many properties of such systems, but if one wants to focus on stability, a natural question to answer is how can one go about designing the controller $u$ in such a way that the size of the region of attraction (i.e., the set of initial states from which a trajectory can start and be asymptotically drawn to its equilibrium) is maximized? We briefly present the results given in  \cite{ControlAppsSOS,ICRA_Acrobot} in this paragraph. We consider a polynomial control affine system
$$\dot{x}=f(x)+g(x)u(x),$$ where $x$ is the state variable, $u(x)$ is the control, and $f,g$ are fixed polynomials that are given to us. If we can find a Lyapunov function $V(x)$ and a sublevel set $B_{\rho}\mathrel{\mathop{:}}=\{x \in \mathbb{R}^n~|~V(x)\leq \rho\}$ of $V$ such that:
\begin{align}\label{eq:roa.control}
x \in B_{\rho}, x\neq 0 \Rightarrow V(x)>0 \text{ and } \dot{V}(x)<0,
\end{align}
then $B_{\rho}$ is a subset of the true region of attraction. To do this, we solve
\begin{align*}
\max_{\rho,L(x),u(x),V(x)} &\rho\\
\text{s.t. } &V(x) \text{ sos}\\
&-\langle \nabla V(x),f(x)+g(x)u(x)\rangle+L(x)(V(x)-\rho) \text{ sos}\\
&L(x) \text{ sos}\\
&V(\sum_{j}e_j)=1,
\end{align*}
where $e_j$ is the $j^{th}$ standard basis vector for the state space $\mathbb{R}^n$, and $\dot{V}(x)=\frac{\partial V(x)}{\partial x}^T (f(x)+g(x)u(x)).$
To see this, note that (\ref{eq:roa.control}) is implied by the first, second, and third constraint. The last constraint is simply a normalization constraint which prevents $\rho$ from getting arbitrarily big by scaling of the coefficients of $V$. Solving this problem is not quite an sos program: indeed, the feasible set is not even convex as we multiply decision variables together (e.g., $L(x)V(x)$). However by alternating optimization over $V$ and $\rho,$ with $u$ and $L(x)$ fixed, and optimization over $\rho,u$ and $L$ with $V$ fixed (using bisection on $\rho$), we are able to solve this problem using sum of squares optimization. Examples of successful implementations of such techniques can be found in the two papers \cite{ControlAppsSOS,ICRA_Acrobot} mentioned above.
\vspace{2mm}

\subsection{Collision avoidance} \label{sec:col.avoid} \index{Collision avoidance}

When Lyapunov theory was first developed, its goal was primarily to certify stability of systems. Thus, Lyapunov functions originally referred to those functions whose properties certified stability of equilibrium points (such as the ones defined in Theorem \ref{th:lyap}). Now, however, the notion of a Lyapunov function has come to englobe any function that is able to certify properties of a system without requiring explicit computation of its trajectories. Following this broader definition, we will present another category of Lyapunov functions in this subsection, sometimes called \emph{barrier certificates}, which prove that systems are \emph{collision-avoidant}.

We consider again a polynomial dynamical system as in (\ref{eq:ode.stability}) and we let $\mathcal{X}_{0}$ and $\mathcal{X}_{u}$ to be two sets in $\mathbb{R}^n$. We assume that the trajectories of our system start in $\mathcal{X}_0$, i.e., $x(0) \in \mathcal{X}_0$. We would like to guarantee that all trajectories of (\ref{eq:ode.stability}) whose initial states $x(0)$ are in $\mathcal{X}_0$ do not enter the ``unsafe region'' $\mathcal{X}_{u}$. Such a system is called collision-avoidant. A sufficient condition for the system to be collision-avoidant is the existence of a barrier certificate, as we describe below.

\begin{theorem}\cite{barrier_certificates_PrajnaJadbabaie} \label{th:barrier}
	Suppose there exists a barrier certificate, namely a continuously differentiable function $B:\mathbb{R}^n \rightarrow \mathbb{R}$ that satisfies the following conditions:
	\begin{enumerate}[(i)]
		\item $B(x)>0$ for all $x \in \mathcal{X}_u$
		\item $B(x)\leq 0$ for all $x \in \mathcal{X}_0$
		\item $\dot{B}(x)=\nabla B(x)^T f(x)\leq 0$ for all $x \in \mathbb{R}^n$.
	\end{enumerate}
	Then there exists no trajectory of (\ref{eq:ode.stability}) that starts from an initial state in $\mathcal{X}_0$ and reaches a state in $\mathcal{X}_u$.
\end{theorem}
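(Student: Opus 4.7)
The plan is to argue by contradiction, using $B$ as a scalar-valued ``energy-like'' quantity that is monotone along trajectories and that cannot cross zero from the side of $\mathcal{X}_0$ to the side of $\mathcal{X}_u$. Suppose a trajectory $x(\cdot)$ of (\ref{eq:ode.stability}) exists with $x(0)\in\mathcal{X}_0$ and $x(T)\in\mathcal{X}_u$ for some $T>0$. The composite function $\varphi(t) \mathrel{\mathop{:}}= B(x(t))$ is continuously differentiable because $B$ is $C^1$ and $x(\cdot)$ is $C^1$ as a solution to a smooth ODE. The natural first step is therefore to compute $\dot{\varphi}(t)$ via the chain rule.

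The second step is to apply the three hypotheses in sequence. By the chain rule and the state equation, $\dot{\varphi}(t) = \nabla B(x(t))^T \dot{x}(t) = \nabla B(x(t))^T f(x(t)) = \dot{B}(x(t))$, which by condition (iii) is nonpositive for every $t\ge 0$. Thus $\varphi$ is nonincreasing on $[0,T]$, yielding $\varphi(T)\le \varphi(0)$. Condition (ii) together with $x(0)\in\mathcal{X}_0$ gives $\varphi(0)=B(x(0))\le 0$, so $B(x(T))=\varphi(T)\le 0$. On the other hand, $x(T)\in\mathcal{X}_u$ combined with condition (i) forces $B(x(T))>0$, which is the desired contradiction.

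The argument is essentially a one-line invariance statement dressed up with a contradiction. I do not expect any real obstacle: the only subtlety is making sure that the trajectory is defined and $C^1$ on the whole interval $[0,T]$ so that the chain rule applies globally, but this is standard for the polynomial (hence locally Lipschitz) vector field $f$ as long as the trajectory exists up to time $T$, which is what ``reaches a state in $\mathcal{X}_u$'' presumes. Condition (iii) is imposed on all of $\mathbb{R}^n$, which is stronger than needed---only the values along the trajectory are used---but this global formulation is exactly what makes the hypothesis checkable by sum of squares techniques later on.
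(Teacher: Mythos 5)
Your proof is correct and follows essentially the same argument as the paper's: both track $B(x(t))$ along a trajectory, use the chain rule and condition (iii) to show it is nonincreasing, and combine conditions (i) and (ii) to conclude the trajectory cannot reach $\mathcal{X}_u$. The only cosmetic difference is that you phrase it as a contradiction while the paper states it directly, and your remark on well-definedness of the trajectory on $[0,T]$ is a welcome (if minor) extra precision.
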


\begin{proof}
	Assume that a barrier certificate $B$ satisfying the conditions above exists. Let $x(t)$ be a trajectory in $\mathbb{R}^n$ starting at a point $x(0)$ in $\mathcal{X}_0$ and consider the evolution of $B(x(t))$ along this trajectory. By (ii), $B(x(0)) \leq 0$. Furthermore, the derivative of $B$ along the trajectory is nonpositive from (iii). This implies that $B(x(t))$ decreases with $t$ and hence $B(x(t))$ can never become positive. As any $x \in \mathcal{X}_u$ satisfies $B(x)>0$, it follows that any such trajectory can never reach $\mathcal{X}_u$.
\end{proof}

Just as was done previously, we can search for a barrier certificate within the set of polynomial functions. Under the assumption that the sets $\mathcal{X}_u$ and $\mathcal{X}_0$ are closed basic semialgebraic sets, i.e., can be written as the intersection of a finite number of polynomial equalities or inequalities, we can rewrite constraints (i)-(iii) in Theorem \ref{th:barrier} using sum of squares polynomials. 

\begin{definition}\label{def:sos.barrier}
	Let $\mathcal{X}_u=\{x \in \mathbb{R}^n~|~ g_1(x)\geq 0,\ldots, g_m(x)\geq 0\}$ and $\mathcal{X}_0=\{x \in \mathbb{R}^n~|~ \tilde{g}_1(x)\geq 0,\ldots, \tilde{g}_p(x)\geq 0\},$ where $g_1,\ldots,g_m,\tilde{g}_1,\ldots,\tilde{g}_p$ are polynomials. A sum of squares (sos) barrier certificate is a multivariate polynomial $B$ such that
	\begin{enumerate}[(i')]
		\item $B(x)=\epsilon+\sigma_0(x)+\sum_{i=1}^m \sigma_i(x) g_i(x)$, where $\epsilon >0$ fixed and $\sigma_i, i=0,\ldots,m$ are sum of squares polynomials
		\item $-B(x)=\tau_0(x)+\sum_{i=1}^p \tau_i(x) \tilde{g}_i(x)$, where $\tau_i, i=0,\ldots,p$ are sum of squares polynomials
		\item $-\dot{B}$ is sos.
	\end{enumerate}
\end{definition}
Note that searching for such a polynomial is a semidefinite program and that if such a polynomial exists, then it follows that it is a barrier certificate, and hence that the system is collision avoidant. 

\begin{example}
	We illustrate the ideas in this paragraph via an example given in \cite{barrier_certificates_PrajnaJadbabaie}. Consider the two dimensional polynomial dynamical system 
	\begin{equation}\label{eq:col.avoid.example}
	\begin{aligned}
	\dot{x}&=y\\
	\dot{y}&=-x+\frac{1}{3}x^3-y
	\end{aligned}
	\end{equation}
	and the sets $\mathcal{X}_0=\{(x,y)~|~ (x-1.5)^2+y^2 \leq 0.25\}$ and $\mathcal{X}_u=\{(x,y)~|~ (x+1)^2+(y+1)^2 \leq 0.16\}.$ We wish to show that this system is collision avoidant. With this goal in mind, we search for an sos barrier certificate as defined in Definition~\ref{def:sos.barrier} using YALMIP \cite{yalmip} and find one of degree 4. In Figure \ref{fig:col.avoid}, we have plotted the boundaries of the two sets $\mathcal{X}_0$ and $\mathcal{X}_u$ as well as some trajectories initialized within $\mathcal{X}_0$ and the 0-level set of our barrier certificate. Note that the 0-level set of our barrier certificate can in fact be viewed as a physical barrier.
\end{example}

\begin{figure}[h!]
	\centering
	\includegraphics[scale=0.25]{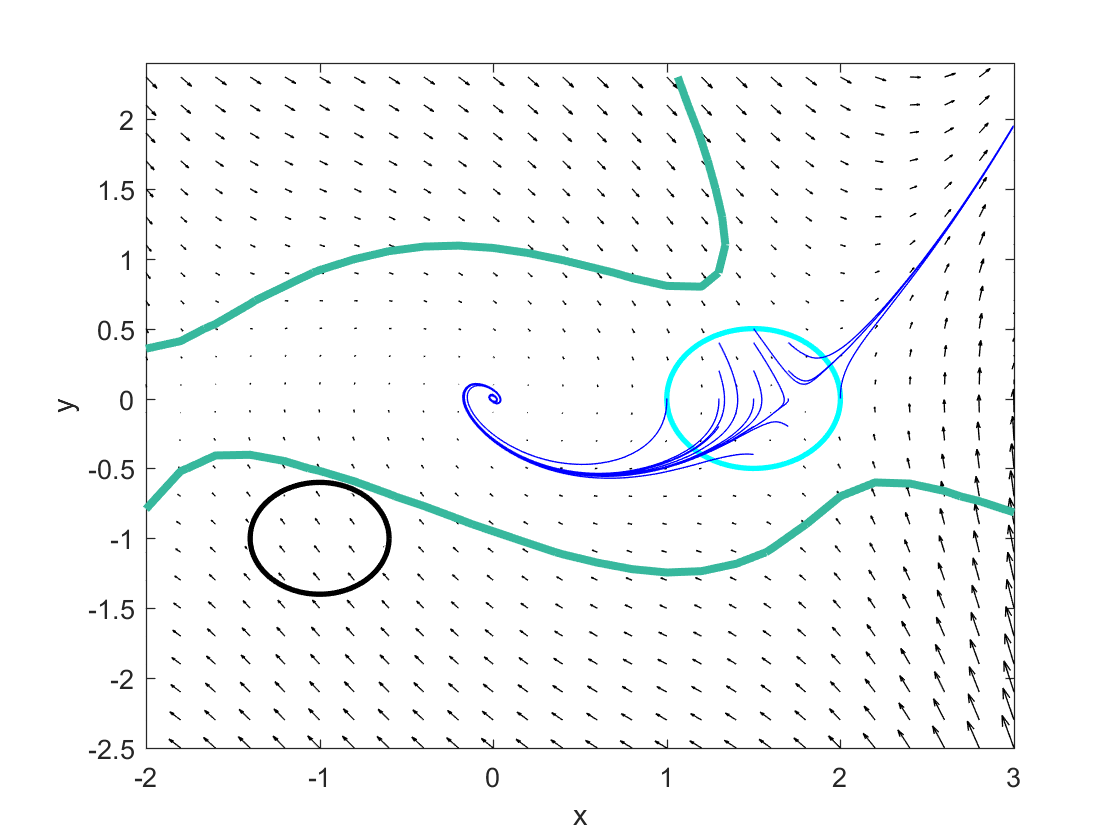}
	\caption{The vector field corresponding to the dynamical system given in (\ref{eq:col.avoid.example}). The initial set is the interior of the light blue circle whereas the unsafe set is the interior of the black circle. Some trajectories are plotted in dark blue. The thick green line represents the 0-level set of $B$. Note that we are guaranteed that no trajectory initialized in the light blue circle will cross the green line, and hence go to the unsafe set.}
	\label{fig:col.avoid}
\end{figure}

\section{Stability of switched linear systems}\label{sec:switched} \index{Switched linear systems} \index{Stability of switched linear systems}

We now transition from a continuous dynamical system to a discrete dynamical system but with a new twist: in this section, we consider discrete linear systems which are both uncertain and time-varying. More specifically, let $$\Sigma\mathrel{\mathop{:}}=\{A_1,\ldots,A_m\}$$ be a set of $m$ real $n\times n$ and let the convex hull of $\Sigma$ be denoted by $$conv(\Sigma)\mathrel{\mathop{:}}=\left\{\sum_{i=1}^m \lambda_i A_i~|~ \lambda_i\geq 0, i=1,\ldots,m, \sum_{i=1}^m \lambda_i=1\right\}.$$ Further define the following discrete-time dynamical system
\begin{align}\label{eq:conv}
x(k+1)=M_k x(k), \text{ where } k=0,1,2\ldots \text{ is the time index and } M_k \in conv(\Sigma).
\end{align}
Note that this dynamical system is linear, but time-varying as the matrix $M_k$ changes with time, and uncertain as, at every time step, we only know that $M_k$ belongs to the convex hull of a set of fixed matrices, without knowing precisely which one it is. We are interested in knowing whether the equilibrium point $\bar{x}=0$ is \emph{absolutely asymptotically stable} (AAS) for (\ref{eq:conv}), i.e., whether $\lim_{k \rightarrow \infty} x(k)=0$ for any $x(0) \in \mathbb{R}^n$ and any sequence of matrices $\{M_k \in conv(\Sigma)\}_k.$ As an example of where such a problem and system may arise, consider, e.g., the task of checking whether a delivery drone is flying in a stable fashion in a windy environment. By linearizing its dynamics around a desired equilibrium point, the behavior of the drone can be modeled locally by a linear dynamical system. However, as this linear dynamical system is unknown due to parameter uncertainty and modeling error, and time-varying due to the effect of the wind, the drone’s behavior is better modeled by a system of the type given in (\ref{eq:conv}).

Define now, for the same family of $m$ matrices $\Sigma$, the following dynamical system, called a \emph{switched linear system}:
\begin{align}\label{eq:switched}
x(k+1)=A_{\sigma(k)}x(k),
\end{align} 
where $k=0,1,2,\ldots$ is the time index and $\sigma:\mathbb{N}\rightarrow \{1,\ldots,m\}$. It so happens that the origin is AAS for (\ref{eq:conv}) if and only if it is \emph{asymptotically stable under arbitrary switching} (ASUAS) for (\ref{eq:switched}). This means that $\lim_{k \rightarrow \infty} x(k)=0$ for any $x(0) \in \mathbb{R}^n$ and any sequence of matrices $A_{\sigma(1)}, A_{\sigma(2)}, \ldots$ In the following, we will study ASUAS for (\ref{eq:switched}) but all our conclusions will naturally hold for AAS of (\ref{eq:conv}).

First, when $m=1$, the set $\Sigma$ is reduced to one matrix $A_1$ and (\ref{eq:switched}) becomes a discrete-time linear system. It is a well-known fact (see, e.g., \cite[Section 2.2]{PabloGregRekha_BOOK}) that a discrete-time linear system is asymptotically stable if and only if the spectral radius of $A_1$ is strictly less than one. This can be checked in polynomial-time. When $m \geq 2$, an analogous characterization holds but with a generalization of the notion of spectral radius from one matrix to a family of matrices called the \emph{joint spectral radius}. \index{Joint Spectral Radius}

\begin{definition}\cite{RoSt60}
	Let $\Sigma=\{A_1,\ldots,A_m\}$ be a family of $m$ matrices of size $n \times n$. The joint spectral radius (JSR) of $\Sigma$ is given by
	\begin{align}\label{def:JSR}
	\rho(\Sigma)=\lim_{k \rightarrow \infty} \max_{\sigma \in \{1,\ldots,m\}^k } ||A_{\sigma(1)}\ldots A_{\sigma(k)}||^{1/k},
	\end{align}
	where $||.||$ is any matrix norm.
\end{definition} 
Note that when $m=1$, this definition collapses into $$\rho(A_1)=\lim_{k \rightarrow \infty} ||A_1^k||^{1/k}.$$ The right hand side is the spectral radius of $A_1$ from Gelfand's formula, hence the joint spectral radius is equal to the spectral radius when $m=1$. As previously mentioned, ASUAS can be characterized using the JSR, which is what we make explicit now.
\begin{theorem}\cite{Raphael_Book}
	The origin is ASUAS for the system given in (\ref{eq:switched}) if and only if $\rho(\Sigma)<1$.
\end{theorem}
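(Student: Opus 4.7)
The plan is to prove the two implications separately. For $(\Leftarrow)$, set $\tau_k := \max_{\sigma \in \{1,\ldots,m\}^k} \|A_{\sigma(1)}\cdots A_{\sigma(k)}\|$ for any fixed submultiplicative matrix norm. Submultiplicativity of the norm, together with the fact that every length-$(k+\ell)$ switching sequence splits as a length-$k$ prefix followed by a length-$\ell$ suffix, shows that $\{\tau_k\}$ is submultiplicative: $\tau_{k+\ell} \leq \tau_k \tau_\ell$. Fekete's lemma then yields $\rho(\Sigma) = \lim_{k \to \infty} \tau_k^{1/k} = \inf_k \tau_k^{1/k}$. So if $\rho(\Sigma) < 1$, fix $\gamma \in (\rho(\Sigma), 1)$ and $K$ with $\tau_k \leq \gamma^k$ for all $k \geq K$. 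For every switching sequence $\sigma$ and every initial state, $\|x(k)\| \leq \tau_k \|x(0)\| \leq \gamma^k \|x(0)\|$ for $k \geq K$, proving ASUAS.

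For $(\Rightarrow)$, I would argue by contrapositive: assuming $\rho(\Sigma) \geq 1$, I construct an infinite switching sequence $\sigma^\star$ and an initial state $x^\star$ whose trajectory does not converge to $0$. Since $\rho(\Sigma) = \inf_k \tau_k^{1/k}$, we get $\tau_k \geq 1$ for every $k$, and therefore, for each $k$, there exist a finite switching sequence $\sigma^{(k)} \in \{1,\ldots,m\}^k$ and a unit vector $v_k \in \mathbb{R}^n$ with $\|A_{\sigma^{(k)}(1)} \cdots A_{\sigma^{(k)}(k)} v_k\| \geq 1$. The remaining task is to patch these finite-horizon ``bad'' data into a single infinite $\sigma^\star$ and a single $x^\star$.

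The main obstacle is precisely this patching, since the $\sigma^{(k)}$ need not share common prefixes and the $v_k$ depend on $k$. The natural fix is a combined compactness/K\"onig's-lemma argument: the tree of all switching sequences is finitely branching, so the subtree consisting of all prefixes of the $\sigma^{(k)}$'s is an infinite finitely branching tree and therefore contains an infinite path $\sigma^\star$ by K\"onig's lemma; simultaneously, compactness of the unit sphere in $\mathbb{R}^n$ lets one extract a subsequential limit $x^\star$ of the relevant $v_k$'s. A careful submultiplicativity bookkeeping then shows that the trajectory under $\sigma^\star$ starting at $x^\star$ has norm bounded away from $0$ along an infinite sequence of times, contradicting ASUAS. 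An alternative, more operator-theoretic route---arguably the one implicit in \cite{Raphael_Book}---is to first establish that ASUAS is equivalent to \emph{uniform} exponential stability, i.e.\ the existence of $C>0$ and $\gamma<1$ with $\tau_k \leq C\gamma^k$ for all $k$: one uses pointwise decay of $A_{\sigma(k)}\cdots A_{\sigma(1)}x$ to $0$ together with Banach--Steinhaus to get a uniform operator-norm bound, and then a compactness argument on the unit sphere to promote this to exponential contraction in finitely many steps. From $\tau_k \leq C\gamma^k$, the conclusion $\rho(\Sigma) \leq \gamma < 1$ is then immediate.
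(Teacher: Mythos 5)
The paper itself states this theorem without proof, citing \cite{Raphael_Book}, so there is no in-paper argument to compare against; I will assess your proposal on its own. Your $(\Leftarrow)$ direction is correct: submultiplicativity of $\tau_k$ and Fekete's lemma give $\rho(\Sigma)=\inf_k \tau_k^{1/k}$, and the decay estimate follows (take an induced norm so that $\|Mx\|\leq \|M\|\,\|x\|$; norm-independence of the JSR makes this a cosmetic point).

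The $(\Rightarrow)$ direction has a genuine gap in both routes you sketch, and it is the same gap: uniformity over branches. In the first route, an infinite branch $\sigma^\star$ of the tree of prefixes of the witness words $\sigma^{(k)}$ gives no control on the partial products along $\sigma^\star$. Factoring the full product as (product over the remaining $k-N$ indices)$\,\cdot\,$(product over the first $N$ indices), the inequality $1\leq \|A_{\sigma^{(k)}(k)}\cdots A_{\sigma^{(k)}(1)}v_k\|$ only yields a lower bound of $1/\tau_{k-N}$ on the norm of the length-$N$ partial product applied to $v_k$, with $k-N$ uncontrolled; since $\tau_j$ can be unbounded when $\rho\geq 1$ (a single Jordan block with unit eigenvalue already has $\tau_j\sim j$), this does not keep the trajectory away from zero --- and the $v_k$ are attached to words that need not lie along $\sigma^\star$ in any case. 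In the second route, Banach--Steinhaus needs $\sup_{w}\|A_w x\|<\infty$ with $w$ ranging over \emph{all} finite words; pointwise decay gives a finite bound along each individual switching sequence, but there are uncountably many of them and their bounds need not be uniform (abstractly, a tree-indexed family can vanish along every branch yet be unbounded over the tree). Establishing that uniformity \emph{is} the hard part. The standard repair uses your two tools, but on a different tree and in a different order: for a fixed unit vector $x$, apply K\"onig's lemma to the prefix-closed tree of finite words all of whose prefixes keep $\|A_{\sigma(j)}\cdots A_{\sigma(1)}x\|\geq \tfrac12$; if this tree were infinite it would contain an infinite branch contradicting ASUAS, so there is a finite horizon $K(x)$ within which every switching sequence contracts $x$ below $\tfrac12$. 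Since $\max_{j\leq K(x)}\tau_j<\infty$ trivially, this property is stable under small perturbations of $x$, and compactness of the unit sphere yields a single horizon $K$ valid for all unit vectors. Iterating the contraction every at most $K$ steps gives $\tau_t\leq C\gamma^t$ for some $\gamma<1$, hence $\rho(\Sigma)\leq\gamma<1$.
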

Unlike the linear-system case, where one can decide whether the spectral radius of a matrix is less than one in polynomial time, it is not known whether the problem of testing if $\rho(\Sigma)<1$ is even decidable.  The related question of testing whether $\rho(\Sigma)\leq 1$ is known to be undecidable, already when $A$ contains only 2 matrices \cite{BlTi2}. We refer the reader to \cite{BlTi1} for more computational complexity results relating to the JSR. With the previous result in mind, it comes as no surprise that stability of a switched linear system is not implied by all individual matrices in $\Sigma$ having spectral radius less than one. Consider, e.g., $\Sigma=\{A_1,A_2\}$ with
\begin{align*}
A_1=\begin{bmatrix}
0 & 2 \\ 0 & 0
\end{bmatrix}
\text{ and }
A_2=\begin{bmatrix}
0 & 0\\
2 & 0
\end{bmatrix}.
\end{align*}
Observe that the spectral radii of $A_1$ and $A_2$ are zero, which is less than one. However
$$A_1A_2=\begin{bmatrix} 4 & 0\\ 0 & 0 \end{bmatrix}$$
and so $\rho(\Sigma)$ is lower bounded by $\sqrt{4}=2 > 1$, and the switched linear system is not stable.

As a consequence, it is of interest to compute upper bounds on the JSR: if these bounds are strictly less than 1, then it will follow that the JSR is as well and the system will be asymptotically stable. A first theorem in this direction, which provides a stepping-stone towards the use of sum of squares polynomials, is given below.

\begin{theorem}\cite[Theorem 2.2]{Pablo_Jadbabaie_JSR_journal}\label{th:pos.poly.jsr}
	If there exists a positive definite (homogeneous) polynomial $p(x)$ of degree $2d$ that satisfies
	$$p(A_ix) \leq \gamma^{2d} p(x), \forall x\in \mathbb{R}^n, \forall i=1,\ldots,m.$$
	Then, $\rho(A_1,\ldots,A_m) \leq \gamma$.
\end{theorem}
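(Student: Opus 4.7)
The plan is to iterate the hypothesis along an arbitrary product of matrices from $\Sigma$ and then convert the resulting bound on $p$ into a bound on the usual operator norm of the product, from which the JSR bound follows directly from its definition.

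First I would fix an arbitrary sequence $\sigma(1),\ldots,\sigma(k) \in \{1,\ldots,m\}^k$ and an arbitrary $x \in \mathbb{R}^n$. Applying the hypothesis inductively, first with argument $A_{\sigma(2)} \cdots A_{\sigma(k)} x$, then with $A_{\sigma(3)} \cdots A_{\sigma(k)} x$, etc., yields
\begin{equation*}
p\bigl(A_{\sigma(1)} \cdots A_{\sigma(k)} x\bigr) \;\leq\; \gamma^{2d}\, p\bigl(A_{\sigma(2)} \cdots A_{\sigma(k)} x\bigr) \;\leq\; \cdots \;\leq\; \gamma^{2dk}\, p(x).
\end{equation*}
This is the core algebraic content of the proof and uses only the hypothesis and the fact that the inequality in the hypothesis is quantified over all of $\mathbb{R}^n$.

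Next I would translate the $p$-bound into a Euclidean-norm bound. Because $p$ is continuous, positive definite, and homogeneous of degree $2d$, it is bounded above and below on the unit sphere $\{x:\|x\|=1\}$ by positive constants; combining this with homogeneity gives constants $c_1,c_2>0$ with $c_1\|x\|^{2d} \leq p(x) \leq c_2 \|x\|^{2d}$ for all $x$. Applying these sandwich bounds to the inequality above yields
\begin{equation*}
\|A_{\sigma(1)} \cdots A_{\sigma(k)} x\| \;\leq\; (c_2/c_1)^{1/(2d)}\, \gamma^k\, \|x\|,
\end{equation*}
so that, writing $C \mathrel{\mathop{:}}= (c_2/c_1)^{1/(2d)}$ (a constant independent of $k$ and of the switching sequence),
\begin{equation*}
\|A_{\sigma(1)} \cdots A_{\sigma(k)}\| \;\leq\; C \gamma^k.
\end{equation*}

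Finally, I would take the maximum over $\sigma \in \{1,\ldots,m\}^k$, raise both sides to the $1/k$ power, and pass to the limit in the definition of the joint spectral radius:
\begin{equation*}
\rho(\Sigma) \;=\; \lim_{k\to\infty} \max_{\sigma \in \{1,\ldots,m\}^k} \|A_{\sigma(1)} \cdots A_{\sigma(k)}\|^{1/k} \;\leq\; \lim_{k\to\infty} C^{1/k}\, \gamma \;=\; \gamma.
\end{equation*}
The one subtle point, which I would single out as the main place to be careful, is the extraction of the constants $c_1,c_2$: this is what forces the hypotheses that $p$ be positive definite (so $c_1>0$) and homogeneous (so the sandwich extends from the unit sphere to all of $\mathbb{R}^n$ via scaling by $\|x\|^{2d}$). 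Once these two properties are in hand, the rest of the argument is essentially a telescoping estimate, and nothing beyond Gelfand-type reasoning is needed.
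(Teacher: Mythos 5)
Your proposal is correct and follows essentially the same route as the paper's proof: both iterate the hypothesis along an arbitrary product, use compactness of the unit sphere together with positive definiteness and homogeneity to sandwich $p$ between constant multiples of $\|x\|^{2d}$, and then pass to $k$th roots and the limit in the definition of the JSR. The only cosmetic difference is that you spell out the telescoping step $p(A_{\sigma(1)}\cdots A_{\sigma(k)}x)\leq \gamma^{2dk}p(x)$ explicitly, which the paper leaves implicit in its final inequality.
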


\begin{proof}
	If $p(x)$ is strictly positive, then by compactness of the unit ball in $\mathbb{R}^n$ and continuity of $p$, there exists constants $0<\alpha \leq \beta$ such that $$\alpha ||x||^{2d} \leq p(x) \leq \beta ||x||^{2d} \text{ for all } x \in \mathbb{R}^n.$$
	It follows that
	\begin{align*}
	||A_{\sigma(k)}\ldots A_{\sigma(1)}|| &\leq \max_x \frac{||A_{\sigma(k)}\ldots A_{\sigma(1)}x||}{||x||}\\
	&\leq \left(\frac{\beta}{\alpha} \right)^{1/2d} \max_x \frac{p(A_{\sigma(k)}\ldots A_{\sigma(1)}x)^{1/2d}}{p(x)^{1/2d}}\\
	& \leq \left(\frac{\beta}{\alpha} \right)^{1/2d} \gamma^k.
	\end{align*}
	From the definition of the joint spectral radius given in (\ref{def:JSR}), by taking $k^{th}$ roots and the limit $k\rightarrow \infty$, we immediately have the upper bound $\rho(A_1,\ldots,A_m) \leq \gamma$.
\end{proof}

This theorem clues us in on how to use sum of squares polynomials to compute upper bounds on the JSR. We define, as is done in \cite{Pablo_Jadbabaie_JSR_journal}, the following quantity:
\begin{align}\label{eq:rho.sos}
\rho_{SOS,2d} \mathrel{\mathop{:}}= \begin{bmatrix} \inf_{p \text{ of degree }2d,\gamma} \gamma\\
\text{s.t. } p \text{ sos}\\
\gamma^{2d}p(x)-p(A_ix) \text{ sos}, i=1,\ldots,m,\\
\int_{S^{n-1}} p(x)dx=1,
\end{bmatrix}
\end{align}
where $S^{n-1}$ is the hypersphere.
Note that for fixed $d$ and fixed $\gamma$, the computation of $\rho_{SOS,2d}$ is a semidefinite program. In particular, constraining the integral of $p(x)$ over the hypersphere to be equal to 1 is a linear equation in the coefficients of $p$; see, e.g., \cite{folland2001integrate}. This latter constraint is added on to bypass the aforementioned issue of $p$ and $\gamma^{2d}p(x)-p(A_ix)$ being nonnegative, but not positive. It is easy to see that one can appropriately scale $p$ to satisfy the desired condition without changing the optimal value of the problem. To obtain the smallest $\gamma$ such that $p$ sos and $\gamma^{2d}p(x)-p(A_ix)$ sos, we proceed by bisection on $\gamma$. Indeed, one cannot optimize outright over $\gamma$ and $p$ as the decision variables multiply in the second constraint, making it a nonconvex optimization problem. As a consequence, we typically fix $d$ and then solve a sequence of semidefinite programs as we bisect over $\gamma$. If the optimal value of $\gamma$ found for that $d$ is satisfactory for our purposes, we stop there; otherwise, we move on to a higher degree.

The quality of the bound on the JSR obtained using the sum of squares relaxation described in (\ref{eq:rho.sos}) can be quantified via the following theorem; interestingly, it is independent of the number $m$ of matrices.

\begin{theorem}\cite[Theorem 3.4]{Pablo_Jadbabaie_JSR_journal}\label{th:pablo.jadbabaie}
	The sos relaxation in (\ref{eq:rho.sos}) satisfies
	$$\binom{n+d-1}{d}^{-1/2d} \rho_{SOS,2d} \leq \rho(A_1,\ldots,A_m) \leq \rho_{SOS,2d}.$$	
\end{theorem}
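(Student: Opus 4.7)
The plan splits the theorem into its upper bound $\rho(A_1,\ldots,A_m)\le \rho_{SOS,2d}$ and its lower bound $\binom{n+d-1}{d}^{-1/2d}\rho_{SOS,2d}\le \rho(A_1,\ldots,A_m)$. The first is essentially a corollary of Theorem \ref{th:pos.poly.jsr}, while the second requires a converse Lyapunov-type statement for the JSR together with a quantitative bound on the gap between the cones of nonnegative and sos forms, which is precisely where the combinatorial factor $\binom{n+d-1}{d}$ enters.

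For the upper bound, I would fix any $\gamma$ for which there is a feasible $p$ in the program \eqref{eq:rho.sos}. Then $p$ is sos, hence nonnegative, and $\gamma^{2d}p(x)-p(A_ix)$ is sos for each $i$, so $p(A_ix)\le \gamma^{2d}p(x)$ pointwise. The normalization $\int_{S^{n-1}}p\,dx=1$ rules out $p\equiv 0$. Since Theorem \ref{th:pos.poly.jsr} is stated for strictly positive $p$, I would patch by perturbing to $p_\varepsilon(x)=p(x)+\varepsilon\|x\|^{2d}$, which is still sos, satisfies the Lyapunov inequality with $\gamma$ replaced by $\gamma+O(\varepsilon)$ uniformly on the compact unit sphere, and is positive definite; applying Theorem \ref{th:pos.poly.jsr} and letting $\varepsilon\to 0$ yields $\rho(\Sigma)\le \gamma$. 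Taking the infimum over feasible $\gamma$ gives $\rho(\Sigma)\le \rho_{SOS,2d}$.

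For the lower bound, the strategy proceeds in two steps. First, invoke a converse Lyapunov result: for every $\gamma>\rho(\Sigma)$ there exists a positive definite homogeneous polynomial $p$ of degree $2d$ such that $p(A_ix)\le \gamma^{2d}p(x)$ for all $i$ and all $x$. This can be obtained by approximating an extremal (Barabanov) norm by a polynomial sublevel set of degree $2d$, using the density of smooth convex bodies among balanced convex bodies in $\mathbb{R}^n$. Second, such a $p$ is nonnegative but need not be sos, so one must inflate it to an sos form with controlled loss. The quantitative tool, which is the heart of the proof, is a statement of the following form: for any positive definite homogeneous form $q$ of degree $2d$ in $n$ variables, there exists an sos form $\tilde q$ of the same degree with $q(x)\le \tilde q(x)\le \binom{n+d-1}{d}\,q(x)$ on the unit sphere, where $\binom{n+d-1}{d}$ is the dimension of the space of forms of degree $d$. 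Applying this to $p$, the inflated polynomial $\tilde p$ is sos and satisfies $\tilde p(A_ix)\le \binom{n+d-1}{d}\gamma^{2d}\tilde p(x)$, which, after normalization, provides a feasible point for the program \eqref{eq:rho.sos} at parameter $\binom{n+d-1}{d}^{1/2d}\gamma$. Taking the infimum as $\gamma\downarrow \rho(\Sigma)$ gives the claimed inequality.

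The main obstacle is the quantitative sos-gap estimate in the second step: it is a nontrivial approximation result about the cones of positive and sos forms on the sphere and relies on an analysis of the apolar inner product or on a suitable averaging over the orthogonal group. I would expect to cite (rather than reprove) a Blekherman/Reznick-style bound here, and I would need to verify that the constant obtained matches $\binom{n+d-1}{d}$ exactly rather than an unspecified $c(n,d)$. The converse Lyapunov statement in the first step is delicate because the degree $2d$ is fixed in advance, so some care is needed to argue that the polynomial approximation of an extremal norm does not blow up as $\gamma\downarrow\rho(\Sigma)$; passing to the limit and using compactness of the sos feasible set on the sphere handles this cleanly.
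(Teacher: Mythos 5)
The paper does not actually prove this theorem---it states it and defers entirely to \cite[Theorem 3.4]{Pablo_Jadbabaie_JSR_journal}---so I am assessing your argument against the proof in that reference. For the upper bound, the reduction to Theorem \ref{th:pos.poly.jsr} is the right move, but your perturbation patch fails: if $p$ vanishes at some $x_0$ on the unit sphere, the required inequality for $p_\varepsilon=p+\varepsilon\|x\|^{2d}$ at $x_0$ reads $\varepsilon\|A_ix_0\|^{2d}\le\gamma_\varepsilon^{2d}\varepsilon$, which forces $\gamma_\varepsilon\ge\|A_ix_0\|$ no matter how small $\varepsilon$ is; the loss is not $O(\varepsilon)$. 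Indeed no perturbation can rescue the statement at the level of generality you allow: for $p(x)=x_1^2$ (suitably normalized) and the single matrix $A=\mathrm{diag}(0,2)$, every $\gamma\ge 0$ is feasible for (\ref{eq:rho.sos}) yet $\rho=2$. The honest route is to argue that one may restrict to \emph{positive definite} feasible $p$ without changing the infimum (this is what the normalization in (\ref{eq:rho.sos}) is loosely gesturing at, and what the cited paper's definition makes precise) and then apply Theorem \ref{th:pos.poly.jsr} directly.

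The lower bound has the more serious gap: your step 1, the claimed converse Lyapunov statement that for every $\gamma>\rho$ there is a positive definite form of the \emph{fixed} degree $2d$ with $p(A_ix)\le\gamma^{2d}p(x)$, is false. Already for $d=1$ it would say that common quadratic Lyapunov functions certify $\rho$ arbitrarily tightly, whereas the $\sqrt{n}$ factor in that relaxation is known to be attained; more generally, sublevel sets of forms of a fixed degree are not dense among balanced convex bodies, so the Barabanov-norm approximation cannot be carried out at fixed $2d$. Consequently the entire $\binom{n+d-1}{d}$ loss cannot be charged to an sos-versus-nonnegative gap, and your step 2 --- a pointwise inflation $q\le\tilde q\le\binom{n+d-1}{d}\,q$ with $\tilde q$ sos \emph{of the same degree} --- is not a result you can cite off the shelf (Blekherman's volume estimates go in the opposite direction, and Reznick-type denominators change the degree). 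The actual proof lifts the family to the $N:=\binom{n+d-1}{d}$-dimensional space of degree-$d$ monomials via the symmetric Kronecker power $A\mapsto A^{[d]}$, uses the identity $\rho(\{A_i^{[d]}\})=\rho(\{A_i\})^d$, and applies John's ellipsoid theorem to an extremal norm of the lifted family to get a common quadratic Lyapunov function $Q$ there with rate at most $\sqrt{N}\,\rho(\{A_i\})^d$; pulling $Q$ back along $x\mapsto x^{[d]}$ gives an sos form $p(x)=Q(x^{[d]})$ of degree $2d$ for which $\gamma^{2d}p(x)-p(A_ix)$ is sos with $\gamma=N^{1/2d}\rho$, which is exactly the stated constant. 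Your instinct that the combinatorial factor is the dimension of the space of degree-$d$ forms is correct, but it enters through this lift-and-John argument, not through a converse Lyapunov theorem plus an sos gap.
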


We finish with an illustrative example of the previously-developed techniques.

\begin{example}\label{ex:switched}
	Consider a modification of Example 5.4. in \cite{JSR_path.complete_journal}. We would like to show that the switched linear system defined by the following two matrices 
	$$A_1=\frac{1}{\alpha} \begin{bmatrix} -1 & -1\\ 4 & 0 \end{bmatrix} \text{ and } A_2=\frac{1}{\alpha} \begin{bmatrix} 3 & 3 \\ -2 & 1 \end{bmatrix},$$
	where $\alpha=3.92$ is stable under arbitrary switching. We are able to show using YALMIP that for $2d=6$ and $\gamma=0.9999$, we recover a feasible polynomial $p$ for the SDP given in (\ref{eq:rho.sos}). (It can be checked that all three polynomials appearing in the sos program are positive.) It follows that $\rho(A_1,A_2)\leq 0.9999<1$ and hence the system is ASUAS. We showcase this in Figure \ref{fig:ASUAS} where we have plotted the 1-level set of $p$ together with three random trajectories of the switched system initalized at the same point. Note that all three trajectories flow towards the origin and remain within the 1-sublevel set of $p$.
\end{example}

\begin{figure}[H]
	\centering
	\includegraphics[scale=0.25]{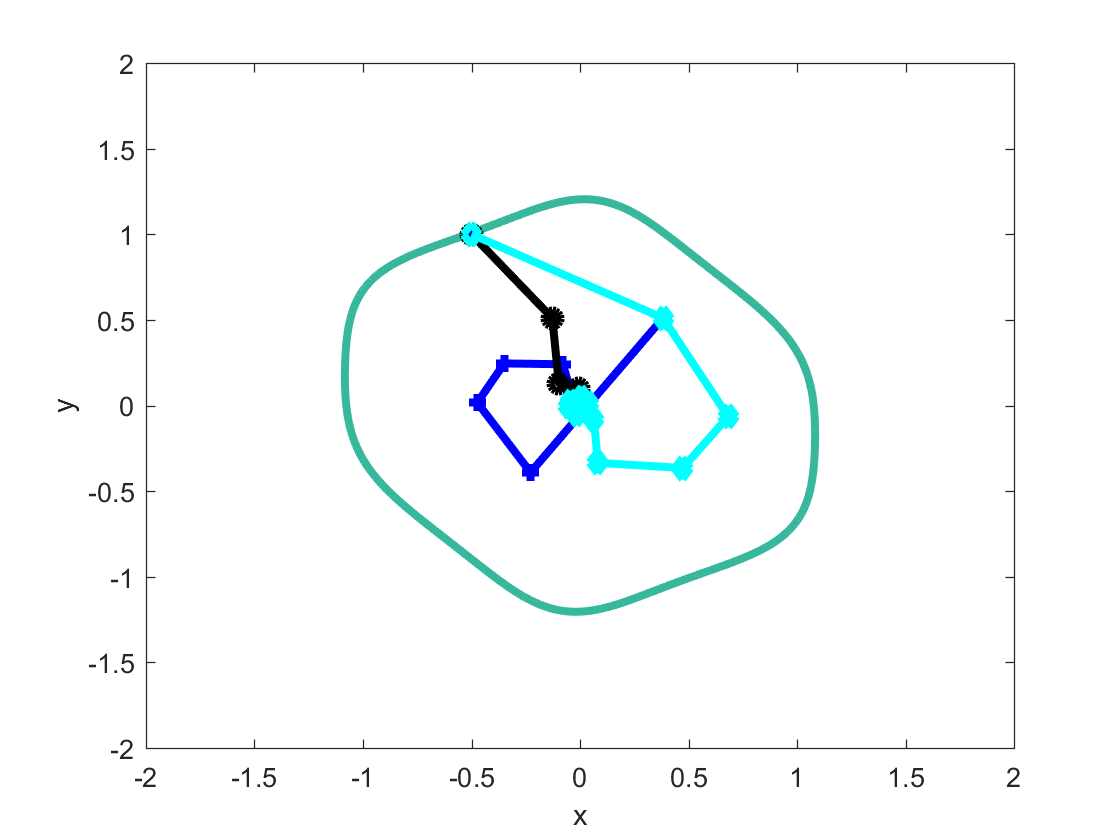}
	\caption{Three possible trajectories of the switched linear system described in Example \ref{ex:switched} together with the 1-sublevel set of the Lyapunov function $p$ obtained (in green)}
	\label{fig:ASUAS}
\end{figure}

\subsection{Using the dual of (\ref{eq:rho.sos}) to generate unstable trajectories}\label{subsec:dual.unstable} \index{Generating unstable trajectories} In the previous subsection, we have seen how one can provide upper bounds on the JSR of a family of matrices in the hopes of certifying stability of an associated switched linear system. In this subsection, our goal is to generate a sequence of matrices whose asymptotic growth rate is arbitrarily close to the JSR. If the system is unstable, then one can hope to produce unstable trajectories via this method, which would serve to certify its instability. The key component to generate these sequences is duality, specifically deriving the dual of (\ref{eq:rho.sos}). We refer the reader to \cite{legat2016generating} for other applications of such techniques as well as a more general version of what is presented here.

Let $\alpha\mathrel{\mathop{:}}=(\alpha_1,\ldots,\alpha_n)^T$ be a vector of nonnegative integers of length $n$ and denote by $|\alpha|\mathrel{\mathop{:}}=\sum_{i=1}^n \alpha_i$. For a vector of variables $x=(x_1,\ldots,x_n)^T$, we can write in shorthand $x^{\alpha}$ to mean $x_1^{\alpha_1}\ldots x_n^{\alpha_n}$. As a consequence, a polynomial $p(x)$ of degree $2d$ in $n$ variables can be written $$p(x)=\sum_{|\alpha|\leq 2d} p_{\alpha}x^{\alpha},$$ where $p_{\alpha}$ is the coefficient of monomial $x^{\alpha}$. There are $N\mathrel{\mathop{:}}=\binom{n+2d}{2d}$ such coefficients and we denote by $\vec{p}$ the $N \times 1$ vector which contains them. For any given $n \times n$ matrix $A$, we have $p(Ax)=\sum_{|\alpha| \leq 2d} q_{A,\alpha}x^{\alpha},$ where $q_{A,\alpha}$ is a linear combination of $p_{\alpha}$. Hence, for any $N \times 1$ vector $\tilde{\mu}$, we can define 
$\tilde{\mu}^{A}$ such that $$\langle \tilde{\mu}^A,\vec{p} \rangle=\langle \tilde{\mu}, \vec{q_A} \rangle.$$ Note that each entry of $\tilde{\mu}^A$ is a linear combination of entries of $\tilde{\mu}$. We use the notation $\Sigma_{n,2d}^*$ in the remainder of the subsection for the dual cone of the set of sum of squares polynomials in $n$ variables and of degree $2d$. We refer the reader to Section \ref{sec:moment.pb} for a definition of it. For our purposes, it suffices to know that this cone is semidefinite representable. The dual problem of (\ref{eq:rho.sos}) can then be written \cite{legat2016generating}
\begin{equation} \label{eq:dual.rho.sos}
\begin{aligned}
&\sup_{\tilde{\mu}_1,\ldots,\tilde{\mu}_m \in \mathbb{R}^N,\lambda \in \mathbb{R} } ~\lambda\\
&\text{s.t.} \sum_{i=1}^m \tilde{\mu}_i^{A_i} -\gamma^{2d}\sum_{i=1}^m \tilde{\mu_i} \in \Sigma_{n,2d}^*\\
&\tilde{\mu}_i \in \Sigma_{n,2d}^*,~\forall i=1,\ldots,m,\\
&\sum_{i=1}^m \langle \tilde{\mu}_i, \vec{s} \rangle=1,
\end{aligned}
\end{equation}
where $\vec{s}$ is the vector of coefficients of $(\sum_{i=1}^n x_i^{2})^d$ in the standard monomial basis. Solving this optimization problem for fixed $\lambda$ and $d$ is a semidefinite program as $\Sigma_{n,2d}^*$ is semidefinite representable. We can proceed as discussed in the previous subsection to obtain the largest $\lambda$ such that the constraints are feasible. 

We now describe the algorithm for recovering a sequence of matrices from $\{A_1,\ldots,A_m\}$ whose asymptotic growth rate is close to the JSR. To initialize the algorithm, find a feasible solution $(\tilde{\mu}_1^*,\ldots,\tilde{\mu}_m^*)$ to (\ref{eq:dual.rho.sos}) and pick $\sigma(0) \in \{1,\ldots,m\}$ such that $\langle \tilde{\mu}_{\sigma(0)}^*,\vec{s} \rangle>0$. Such an index is guaranteed to exist given the last constraint of (\ref{eq:dual.rho.sos}). Then, for $k=1,2,\ldots$, pick an index $\sigma(k) \in \{1,\ldots,m\}$ such that $\langle \tilde{\mu}_{\sigma(k)}^*, \vec{s}_{A_{\sigma(k)} \ldots A_{\sigma(0)} } \rangle$ is maximum, where $\vec{s}_{A_{\sigma(k)} \ldots A_{\sigma(0)}}$ is the vector of coefficients of $s(A_{\sigma(k)} \ldots A_{\sigma(0)}x)$. Repeat the process. The following guarantee on the growth rate of the sequence thus generated can then be shown.

\begin{theorem}[Adapted from Theorem 4.8 in \cite{legat2016generating}]
	Consider a family of $m$ $n \times n$ matrices $\{A_1,\ldots,A_m\}$. For any positive integer $d$ and for any feasible solution $(\tilde{\mu_1}^*,\ldots,\tilde{\mu}_m^*,\lambda^*),$ of (\ref{eq:dual.rho.sos}), the algorithm described above produces a sequence $\sigma(0),\sigma(1),\sigma(2),\ldots$ such that
	$$\lim_{k \rightarrow \infty} ||A_{\sigma(k)}\ldots A_{\sigma(0)}||_2^{1/k} \geq \frac{\lambda^*}{m^{1/2d}}.$$
\end{theorem}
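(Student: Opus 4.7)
My plan is to track the scalar sequence $a_k := \langle \tilde{\mu}_{\sigma(k)}^*,\, \vec{s}_{B_k}\rangle$ with $B_k := A_{\sigma(k)}\cdots A_{\sigma(0)}$, and sandwich it between a geometric lower bound coming from dual feasibility and the upper bound $\|B_k\|_2^{2d}$ coming from a sum-of-squares certificate. Taking $(2dk)^{\text{th}}$ roots and letting $k\to\infty$ will then yield the stated asymptotic. The initialization guarantees $a_0 = \langle \tilde{\mu}_{\sigma(0)}^*,\, \vec{s}\rangle > 0$, so the iterated lower bound will be strictly positive.

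The recurrence arises from testing the dual constraint $\sum_i \tilde{\mu}_i^{A_i} - (\lambda^*)^{2d}\sum_i \tilde{\mu}_i^* \in \Sigma_{n,2d}^*$ against the sos polynomial $q(x) := s(B_{k-1}x) = \|B_{k-1}x\|^{2d}$, which is sos as a composition of $\|\cdot\|^{2d}$ with a linear map. Using the defining property $\langle \tilde{\mu}_i^{A_i},\, \vec{q}\rangle = \langle \tilde{\mu}_i^*,\, \vec{q\circ A_i}\rangle$, membership in $\Sigma_{n,2d}^*$ yields
$$\sum_{i=1}^m \langle \tilde{\mu}_i^*,\, \vec{s}_{A_i B_{k-1}}\rangle \;\ge\; (\lambda^*)^{2d}\sum_{i=1}^m \langle \tilde{\mu}_i^*,\, \vec{s}_{B_{k-1}}\rangle.$$
The greedy rule defining $\sigma(k)$ bounds the left-hand side above by $m\,a_k$; nonnegativity of every term on the right (each $\tilde{\mu}_i^* \in \Sigma_{n,2d}^*$ and $s_{B_{k-1}}$ is sos) bounds the right-hand side below by its $\sigma(k-1)$-th summand, which equals $a_{k-1}$. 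Rearranging gives $a_k \ge ((\lambda^*)^{2d}/m)\,a_{k-1}$, and iterating yields $a_k \ge a_0 \bigl((\lambda^*)^{2d}/m\bigr)^{k}$.

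For the matching upper bound I would prove a lemma: for every $n\times n$ matrix $B$, the polynomial $\|B\|_2^{2d}\, s(x) - s(Bx)$ is sos. The argument uses the identity $u^d - v^d = (u-v)\sum_{j=0}^{d-1} u^{d-1-j} v^j$ with $u = \|B\|_2^2\|x\|^2$ and $v = \|Bx\|^2$: the factor $u-v$ equals the quadratic form $x^T(\|B\|_2^2 I - B^T B)x$, which is sos because $\|B\|_2^2 I \succeq B^T B$, and the remaining factor is a sum of products of the manifestly sos polynomials $\|x\|^2$ and $\|Bx\|^2$. Pairing this certificate against $\tilde{\mu}_{\sigma(k)}^* \in \Sigma_{n,2d}^*$ and invoking the normalization $\sum_i \langle \tilde{\mu}_i^*,\, \vec{s}\rangle = 1$ together with nonnegativity of each term gives $a_k \le \|B_k\|_2^{2d}$.

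Combining the two bounds produces $\|B_k\|_2^{2d} \ge a_0 \bigl((\lambda^*)^{2d}/m\bigr)^{k}$; taking $(2dk)^{\text{th}}$ roots, using $a_0^{1/(2dk)} \to 1$, and letting $k\to\infty$ gives $\lim_{k\to\infty}\|B_k\|_2^{1/k} \ge \lambda^*/m^{1/(2d)}$, as claimed. I expect the sos certificate for $\|B\|_2^{2d}\,s - s_B$ to be the main technical obstacle, since it is where the specific choice of the test polynomial $s(x)=(\sum x_i^2)^d$ really has to be exploited; the remaining steps are clean bookkeeping with the dual pairing and the greedy selection rule.
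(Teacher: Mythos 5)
The paper does not actually prove this theorem; it only states it as adapted from Theorem 4.8 of \cite{legat2016generating}, so there is no in-paper argument to compare against. That said, your overall architecture is the right one and matches the cited reference: sandwich $a_k=\langle\tilde{\mu}_{\sigma(k)}^*,\vec{s}_{B_k}\rangle$ between a geometric lower bound coming from the dual feasibility constraint and the upper bound $\|B_k\|_2^{2d}$ coming from an sos certificate. Your proof that $\|B\|_2^{2d}s(x)-s(Bx)$ is sos (telescoping $u^d-v^d$, with $u-v=x^T(\|B\|_2^2 I-B^TB)x$ psd and the cofactor a sum of products of sos polynomials) is correct, as is the bookkeeping with the normalization constraint and the nonnegativity of each pairing against an element of $\Sigma_{n,2d}^*$.

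There is, however, one concrete false step. You claim that pairing the constraint with $q=s\circ B_{k-1}$ produces $\sum_i\langle\tilde{\mu}_i^*,\vec{s}_{A_iB_{k-1}}\rangle$ on the left. By the defining identity $\langle\tilde{\mu}^{A},\vec{p}\rangle=\langle\tilde{\mu},\vec{p\circ A}\rangle$, what you actually get is $\langle\tilde{\mu}_i^*,\vec{q\circ A_i}\rangle$ with $(q\circ A_i)(x)=s(B_{k-1}A_ix)$, i.e.\ the coefficient vector $\vec{s}_{B_{k-1}A_i}$; you have silently commuted $A_i$ past $B_{k-1}$, and $s(A_iB_{k-1}x)\neq s(B_{k-1}A_ix)$ in general. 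As written, the displayed recurrence therefore does not connect to $a_k$ defined through the prepended product $B_k=A_{\sigma(k)}B_{k-1}$. The repair is to build the product by right multiplication, $B_k=B_{k-1}A_{\sigma(k)}$, and to read the greedy rule as maximizing $\langle\tilde{\mu}_i^*,\vec{s}_{B_{k-1}A_i}\rangle$; with that convention every other line of your argument goes through and yields the stated growth rate for $\|A_{\sigma(0)}\cdots A_{\sigma(k)}\|_2^{1/k}$. (The ordering ambiguity is arguably already present in the paper's informal description of the algorithm, but a proof must commit to the order that the pairing identity actually supports.) A smaller wrinkle: your $a_0$ is $\langle\tilde{\mu}_{\sigma(0)}^*,\vec{s}_{B_0}\rangle$ with $B_0$ already a product of one matrix, which is not the initialization quantity $\langle\tilde{\mu}_{\sigma(0)}^*,\vec{s}\rangle$; either seed the induction at $\langle\tilde{\mu}_{\sigma(0)}^*,\vec{s}\rangle>0$ with an empty product, or bound $a_0$ by one application of the dual constraint with $p=s$. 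Neither issue changes the final limit, but both must be fixed for the recurrence to be valid.
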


\subsection{Other areas of application of the JSR} \index{Other applications of the JSR} The JSR plays an important role in determining whether a switched linear system is asymptotically stable. But this is far from the only application where it is a relevant quantity. In fact, the concept first started gaining notoriety in the context of the study of wavelets \cite{blondel2008birth}. It also appears in economics \cite{blondel2009polynomial}, coding theory \cite{Raphael_Book}, combinatorics on words \cite{Raphael_Book}, and agent consensus \cite{blondel2005convergence}, to name a few. We give a brief overview of its role in economics and multi-agent consensus here.

\index{Leontief input-output model} In 1973, Wassily Leontief won a Nobel prize in economics for his work on input-output analysis, which models how changes in one sector of the economy can impact other sectors. In his model of inputs and outputs, Leontief divides the economy into $n$ sectors and postulates the following relationship between production and demand:
\begin{align}\label{eq:leontief}
x=Ax+d. 
\end{align}
Here, $d$ is a vector in $\mathbb{R}^n_+$, each component corresponding to demand for the sector $i$, and $x$ is also a vector in $\mathbb{R}^n$, each component describing the production of sector $i$, and $A$ is a nonnegative $n \times n$ matrix, called the consumption matrix, that relates the production of a sector $i$ to the production of other sectors. The interpretation of the consumption matrix is the following: if one wants to produce one unit for sector $i$, then one would need $A_{ij}$ units from sector $j$. The economy is called \emph{productive} if, for any $d\geq 0$, there exists a nonnegative vector $x$ satisfying (\ref{eq:leontief}). This occurs if the spectral radius of $A$ is strictly less than one. However, it can be expected that our knowledge of the consumption matrix is uncertain. It may then be the case that instead of exactly knowing the value of $A$, we simply know that it belongs to the convex hull of matrices $\{A_1,\ldots,A_m\}$. In this case, to determine whether the economy is productive, one needs to consider the joint spectral radius of $\{A_1,\ldots,A_m\}$ instead; see \cite{blondel2009polynomial} for more details.

\index{Multi-agent consensus} The JSR also crops up in the context of multi-agent consensus. Consider a set $N=\{1,\ldots,n\}$ of agents that try to reach agreement on a common scalar value by exchanging tentative values and combining them. More specifically, each agent $i$ starts with a specific value $x_i(0)$ assigned to him or her. The vector $x(t)=(x_1(t),\ldots,x_n(t))$ with the values held by the agents at time $t=0,1,2,\ldots$ is then updated as
$$x(t+1)=A(t)x(t),$$
where $A(t)$ is a stochastic matrix. The goal of \cite{blondel2005convergence} is to establish conditions under which $x_i(t)$ converges to a constant $c$ independent of $i$ when $t\rightarrow \infty$. When this occurs, convergence rates are also shown.  It so happens that a measure of the convergence rate of $x(t)$ to the vector of constants $(c,\ldots,c)$ (when it exists) is given by the joint spectral radius of a set of matrices, obtained by projecting the matrices $A(s), s=0,1,\ldots,t$ onto the space orthogonal to the all ones vector; see \cite{blondel2005convergence} for more details.

\section{Related applications}

\subsection{Fluid dynamics} \index{fluid dynamics}

Fluid dynamics focuses on the study of the flow of fluids such as liquids or gases. Like the problems we considered before, certain properties of this flow can be shown to hold by using Lyapunov analysis. One such property is \emph{stability} of the flow, which we describe now; see \cite{huang2015sum,goulart2012global}. Consider a viscous incompressible flow of velocity $w$ and pressure $p$ evolving inside a bounded domain $\Omega$ with boundary $\partial \Omega$ under the action of body force $f$. The velocity and pressure depend on the point $x$ at which we measure them, as well as time $t$. The changes in velocity and pressure of the flow are described by the Navier-Stokes and continuity equations:
\begin{equation}\label{eq:NS}
\begin{aligned}
\frac{\partial w}{\partial t}+w\cdot \nabla w&=-\nabla p+\frac{1}{R_e} \nabla^2 w+f\\
\nabla \cdot w&=0,
\end{aligned}
\end{equation}
where $\nabla \cdot w$ denotes the divergence of $w$, $\nabla^2 w$ is the vector Laplacian of $w$, and $R_e$ is the Reynolds number, which is a constant that depends on the fluid. We further assume that $w=0$ on the boundary. A steady solution $w=\bar{u}$ and $p=\bar{p}$ to (\ref{eq:NS}) (i.e., a solution to (\ref{eq:NS}) that does not depend on time) is globally stable if for each $\epsilon>0$, there exists $\delta>0$ such that $||w-\bar{u}|| \leq \delta$ at time $t_0$ implies that $||w-\bar{u}|| \leq \epsilon$ for all $t \geq t_0$. In other words, the velocity of the flow does not change too much if a small perturbation is applied, that is, the flow is not turbulent. A steady flow can be proved to be stable if we are able to construct a Lyapunov function $V(u)$ which is positive definite and decreases on any solution $w$ to (\ref{eq:NS}). The goal is then to obtain the largest Reynolds number $R_e$ such that the flow remains stable. This is traditionally done via an energy stability approach, which means that $V(u)$ is taken to be a very specific Lyapunov function, namely $||u||^2$, where $||u||^2=\int_{\Omega} u \cdot u ~d\Omega$. With $R_e$ fixed, determining if the system is stable amounts to solving a linear eigenvalue problem, which is tractable; see \cite{goulart2012global} for more details. However, the value of $R_e$ obtained can be very far from the true value for which the system is unstable. Sum of squares polynomials can then be used to search for improved Lyapunov functions and hence improved values of $R_e$. In \cite{goulart2012global}, it is suggested to use a Lyapunov function of the form $V(a_1,\ldots,a_k,q^2)$ where $u=\sum_{i=1}^k a_i e_i +u_s$, $e_i$ are determined via the energy stability approach, and $q^2=||u_s||^2$. Under these conditions, and by bounding quantities that appear in the Navier-Stokes equations by polynomials in $(a_1,\ldots,a_k,q)$, the authors of \cite{goulart2012global} are able to solve a sum of squares optimization problem to obtain improved lower bounds on the Reynolds number at which turbulence occurs. We refer the reader to \cite{goulart2012global,huang2015sum} and the references within for additional information on this topic.

\subsection{Software verification} \index{Software verification} The goal of software verification is to ensure that a piece of software satisfies some performance and security specifications. This can include for example finite-time termination, avoiding division by zero, or absence of overflow. A way of doing this is to view the computer program as a discrete-time dynamical system: the program operates over a finite state space and is defined by a transition function, which plays the role of a transition map. From there, it is natural to define analogs to Lyapunov functions (so-called \emph{Lyapunov invariants} in \cite{roozbehani2005modeling}), whose purpose it is to certify the aforementioned specifications. We refer the reader to \cite{roozbehani2005modeling} for the formal definitions of these concepts and how to use them in practice.

\part{Probability and measure theory} \index{Probability and measure theory}

In this part, we consider a measure space $(\Omega, \mathcal{F}, \mu)$, where $\Omega \subseteq \mathbb{R}^n$ is the sample space, $\mathcal{F}$ is a $\sigma$-algebra over $\Omega$ and $\mu$ is a measure over $(\Omega, \mathcal{F})$. We remind the reader that a $\sigma$-\emph{algebra} is simply a collection of subsets of $\Omega$ that is closed under complement, as well as countable unions and intersections.  A \emph{measure} is a function $\mu: \mathcal{F} \rightarrow \mathbb{R}^{+} \cup \{+\infty\}$ with two properties: $\mu(\emptyset)=0$ and $\mu(\cup_{i=1}^{+\infty}F_i)=\sum_{i=1}^{+\infty} \mu(F_i)$ for any pairwise disjoint sets $F_i$ in $\mathcal{F}$. If the measure is a \emph{probability} measure, then it has the additional property that $\mu(\Omega)=1$. A recurring concept in this Part will be that of \emph{moments} of a measure. Indeed, as we will see in the next section, nonnegative polynomials and moments of a measure are two sides of the same coin via duality. Let $\alpha\mathrel{\mathop{:}}=(\alpha_1,\ldots,\alpha_n)^T$ be a vector of nonnegative integers of length $n$ and denote by $|\alpha|\mathrel{\mathop{:}}=\sum_{i=1}^n \alpha_i$. For a vector of variables $x=(x_1,\ldots,x_n)^T$, we can write in shorthand $x^{\alpha}$ to mean $x_1^{\alpha_1}\ldots x_n^{\alpha_n}$. The moment of order $\alpha$ of a measure $\mu$ on $(\Omega, \mathcal{F})$ is then given by
$$\int_{\Omega} x^{\alpha} d \mu(x).$$

In Section \ref{sec:bounds}, we will shift our focus from moments of a measure to moments of a \emph{random variable}. This should not confuse the reader as moments of a random variable are in fact moments of a very specific measure---that induced by the random variable. Recall that a random variable $X$ is a (measurable) mapping from $(\Omega,\mathcal{F},\mu)$ to $(E,\mathcal{E})$, where $\mu$ is a probability distribution, $E \subset \mathbb{R}$ and $\mathcal{E}$ is a $\sigma$-algebra over $E$. The probability measure $p_X$ induced by $X$ is then defined as
$$p_X(S)=\mu(\{\omega \in \mathbb{R}^n~|~X(w)\in S\})$$
for any set $S \subseteq E$. Sometimes, the notation $p(X\in S)$ is used as shorthand for $p_X(S)$. The moment of order $k$ of $X$, where $k \in \{0,\ldots,K\}$, of the random variable $X$ is then simply:
$$\int_{\Omega} X^k dp=\int_{\omega \in \Omega} X^{k}(\omega) dp(\omega) \mathrel{\mathop{:}}= \int_{x \in E} x^{k} dp_X(x).$$
Note that here $k$ is \emph{not} a multi-index: this is due to the fact that $p_X$ is a measure over $E$, a subset of $\mathbb{R}$, not over $\Omega$, a subset of $\mathbb{R}^n$.
By definition of the expectation of a random variable, the moment of order $k$ of $X$ can also be viewed as the expectation $E[X^k]$.

In Section \ref{sec:moment.pb}, we will focus on a problem called the \emph{moment problem}, and in Section \ref{sec:bounds}, we will work on computing upper bounds on the quantity $p(X \in B)$ where $B \subseteq E$, and the applications of such methods in finance---more specifically, option pricing.

\section{The moment problem}\label{sec:moment.pb} \index{the moment problem}
In this section, we consider the case where $(\Omega,\mathcal{F})=(\mathbb{R}^n, \mathcal{B})$, with $\mathcal{B}$ being the Borel $\sigma$-algebra over $\mathbb{R}^n$. This is the $\sigma$-algebra generated by the open sets of $\mathbb{R}^n$. The \emph{moment problem} is the following inverse problem: given a sequence $\{y_{\alpha}\}_{\alpha \in \mathbb{N}^n}$ of scalars, does there exist a measure $\mu$ over $(\mathbb{R}^n,\mathcal{B})$ such that
$$y_\alpha=\int_{\Omega} x^{\alpha} d \mu(x), \forall \alpha \in \mathbb{N}^n?$$
When such a measure does exist, we call it a \emph{representing measure} for $y$. For ease of exposition, we will consider a closely related problem: the truncated moment problem. In this case, the sequence $y$ is a truncated sequence, i.e., a sequence $\{y_{\alpha}\}_{\alpha \in \mathbb{N}^n, |\alpha|<c}$ where $c$ is a constant, and we ask again whether there exists a measure $\mu$ such that the (truncated) moments of $\mu$ agree with this sequence. Our presentation mostly follows \cite{Laurent_survey}. For more information on the moment problem, we refer the reader to \cite{Laurent_survey,lasserre2015introduction} and the references therein.

Let $\mathbb{N}^n_{2d}\mathrel{\mathop{:}}=\{\alpha \in \mathbb{N}^n~|~|\alpha| \leq 2d\}$ and define
\begin{align}\label{eq:def.Mnd}
\mathcal{M}_{n,2d} \mathrel{\mathop{:}}=\{\{y_{\alpha}\}_{\alpha \in \mathbb{N}^n_{2d}}~|~ \exists \text{ a measure } \mu \text{ on $\mathbb{R}^n$ such that } y_{\alpha}= \int_{\mathbb{R}^n} x^{\alpha}d\mu, ~\forall \alpha \in \mathbb{N}^n_{2d}\},
\end{align}
i.e., $\mathcal{M}_{n,2d}$ is the set of truncated sequences $\{y_{\alpha}\}$ for which $\{y_{\alpha}\}$ has a representing measure $\mu$. It is easy to see that $\mathcal{M}_{n,2d}$ is a convex cone and that the truncated moment problem is exactly the problem of understanding which sequences belong to $\mathcal{M}_{n,2d}$. To answer this question, we consider the dual cone of $M_{n,2d}$.

\subsection{Dual cone of $M_{n,2d}$}
By definition of a dual cone, we have that
\begin{align}\label{eq:cone.def}
(\mathcal{M}_{n,2d})^*=\{\{p_{\alpha}\}_{\alpha \in \mathbb{N}^{n}_{2d}} ~|~ \sum_{\alpha} p_{\alpha}y_{\alpha} \geq 0, \forall \{y_{\alpha}\} \in \mathcal{M}_{n,2d}\}.
\end{align}
\begin{theorem}\label{th:pos}
	Let $P_{n,2d}$ denote the cone of nonnegative polynomials in $n$ variables and of degree less than or equal to $2d$. We have $P_{n,2d}=(\mathcal{M}_{n,2d})^*.$	
\end{theorem}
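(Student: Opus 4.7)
The plan is to prove both inclusions directly from the definitions, identifying a polynomial $p(x) = \sum_{\alpha \in \mathbb{N}^n_{2d}} p_\alpha x^\alpha$ of degree at most $2d$ with its coefficient vector $\{p_\alpha\}_{\alpha \in \mathbb{N}^n_{2d}}$, so that the pairing appearing in (\ref{eq:cone.def}) is literally $\sum_\alpha p_\alpha y_\alpha = \langle \vec{p}, \vec{y} \rangle$. Both $P_{n,2d}$ and $\mathcal{M}_{n,2d}$ then live in the same finite-dimensional space $\mathbb{R}^{\binom{n+2d}{2d}}$, and the claim is a clean statement of duality between pointwise nonnegativity on $\mathbb{R}^n$ and the existence of a representing measure.

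For the inclusion $P_{n,2d} \subseteq (\mathcal{M}_{n,2d})^*$, I would take an arbitrary $p \in P_{n,2d}$ and an arbitrary $y \in \mathcal{M}_{n,2d}$ with representing measure $\mu$. Since the sum over $\alpha$ is finite, linearity of the integral gives
\[
\sum_{\alpha} p_\alpha y_\alpha \;=\; \sum_{\alpha} p_\alpha \int_{\mathbb{R}^n} x^\alpha \, d\mu(x) \;=\; \int_{\mathbb{R}^n} p(x)\, d\mu(x) \;\geq\; 0,
\]
where the last inequality uses $p \geq 0$ on $\mathbb{R}^n$ together with $\mu$ being a nonnegative measure. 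This places $p$ in $(\mathcal{M}_{n,2d})^*$ by the definition (\ref{eq:cone.def}).

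For the reverse inclusion $(\mathcal{M}_{n,2d})^* \subseteq P_{n,2d}$, given $p \in (\mathcal{M}_{n,2d})^*$, I need to check that $p(x_0) \geq 0$ for every $x_0 \in \mathbb{R}^n$. The natural move is to test $p$ against the simplest possible representing measure: the Dirac mass $\delta_{x_0}$, which clearly defines a measure on $(\mathbb{R}^n,\mathcal{B})$ and whose $\alpha$-th moment is $x_0^\alpha$. The resulting truncated sequence $\{x_0^\alpha\}_{|\alpha| \leq 2d}$ lies in $\mathcal{M}_{n,2d}$, so the dual-cone condition yields
\[
0 \;\leq\; \sum_\alpha p_\alpha\, x_0^\alpha \;=\; p(x_0).
\]
Since $x_0$ was arbitrary, $p \in P_{n,2d}$, finishing the proof.

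I do not expect a real obstacle here: the statement is essentially a ``Dirac mass trick'' paired with a swap of a finite sum and an integral, both of which are immediate in the truncated setting. What makes the result substantive is not its proof but its consequences, namely that it reduces the moment problem (membership in $\mathcal{M}_{n,2d}$) to the dual problem of certifying polynomial nonnegativity, which is in turn the bridge to the sum-of-squares relaxations used throughout the rest of the paper.
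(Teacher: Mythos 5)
Your proof is correct and follows essentially the same route as the paper's: the first inclusion is the identical swap of finite sum and integral, and the second is the same Dirac-mass test, merely phrased directly rather than via the contrapositive as the paper does. No gaps.
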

\begin{proof}
	Throughout this proof, as mentioned previously, we will identify a polynomial $p$ in $P_{n,2d}$ by its coefficients $p_{\alpha}$ in the standard monomial basis, i.e., $$P_{n,2d} \triangleq \{\{p_{\alpha}\}_{\alpha \in \mathbb{N}^n_{2d}}~|~\ p(x)=\sum_{\alpha} p_{\alpha}x^{\alpha} \geq 0\}.$$
	We first show that $P_{n,2d} \subseteq (\mathcal{M}_{n,2d})^*$. Let $\{p_{\alpha}\}_{\alpha} \in P_{n,2d}$. For any $\{y_{\alpha}\}$ in $(\mathcal{M}_{n,2d})^*$, we have:
	$$\sum_{\alpha} p_{\alpha} y_{\alpha}=\sum_{\alpha} p_{\alpha} \int x^{\alpha} d\mu=\int \sum_{\alpha} p_{\alpha}x^{\alpha} d\mu=\int p(x)d \mu \geq 0$$
	as $p(x)$ is nonnegative and hence the inclusion follows.
	
	We now show that $P_{n,2d} \supseteq (\mathcal{M}_{n,2d})^*$. Suppose $p \notin P_{n,2d}$. Then, there exists $x_0$ such that $p(x_0)<0$. Let $\delta_{x_0}$ be the Dirac measure at point $x_0$ and let $\{y_{\alpha}\}_{\alpha \in \mathbb{N}_{2d}^n}$ be the sequence of moments associated to $\delta_{x_0}$. We have $$\sum_{\alpha} p_{\alpha} y_{\alpha}=\int \sum_{\alpha} p_{\alpha}x^{\alpha} d\delta_{x_0}(x)=\int p(x) d\delta_{x_0}(x)=p(x_0)<0.$$
	Hence $\{p_{\alpha}\} \notin (\mathcal{M}_{n,2d})^*$ and we have shown the converse direction.
\end{proof}
The corollary below follows.

\begin{corollary}\label{cor:dual.pos}
	We have $(P_{n,2d})^*=cl(\mathcal{M}_{n,2d})$, where cl denotes the closure of the set. 	
\end{corollary}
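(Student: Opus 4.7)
The plan is to obtain this as an immediate consequence of Theorem \ref{th:pos} combined with the bipolar theorem for convex cones in finite-dimensional spaces. Recall that the bipolar theorem states that for any convex cone $K$ in a finite-dimensional real vector space, $(K^*)^* = \text{cl}(K)$, where the dual is taken with respect to a (fixed) inner product. Here all objects live in $\mathbb{R}^{|\mathbb{N}^n_{2d}|}$ with the pairing $\langle p, y\rangle = \sum_\alpha p_\alpha y_\alpha$ implicitly used throughout Theorem \ref{th:pos} and equation (\ref{eq:cone.def}), so the setup is symmetric in $p$ and $y$.

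Concretely, I would first note that $\mathcal{M}_{n,2d}$ is a convex cone (it is closed under nonnegative scaling because scaling the measure $\mu$ by a nonnegative constant scales all moments by the same constant, and it is closed under addition because the moments of $\mu_1 + \mu_2$ are the sums of the moments of $\mu_1$ and $\mu_2$), a fact already stated in the excerpt immediately after (\ref{eq:def.Mnd}). Since we are in a finite-dimensional ambient space, the bipolar theorem applies and gives $((\mathcal{M}_{n,2d})^*)^* = \text{cl}(\mathcal{M}_{n,2d})$. Substituting $(\mathcal{M}_{n,2d})^* = P_{n,2d}$ from Theorem \ref{th:pos} yields exactly the claimed identity $(P_{n,2d})^* = \text{cl}(\mathcal{M}_{n,2d})$.

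The only subtlety, and arguably the main obstacle worth flagging, is the appearance of the closure on the right-hand side: one cannot drop it in general because $\mathcal{M}_{n,2d}$ need not be closed for arbitrary $n$ and $d$ (limits of truncated moment sequences having representing measures may fail to have a representing measure themselves, as is well known in the moment-problem literature). The double-dual operation $K \mapsto (K^*)^*$ always yields a closed convex cone, so the closure on the right is forced by this very construction. No extra argument beyond quoting the bipolar theorem and Theorem \ref{th:pos} is needed, and I would keep the write-up to essentially one or two lines.
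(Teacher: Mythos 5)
Your proof is correct and matches the paper's intent exactly: the paper simply states that the corollary ``follows'' from Theorem \ref{th:pos}, and the implicit argument is precisely the one you spell out, namely dualizing $P_{n,2d}=(\mathcal{M}_{n,2d})^*$ and invoking the bipolar theorem for the convex cone $\mathcal{M}_{n,2d}$ in the finite-dimensional space $\mathbb{R}^{|\mathbb{N}^n_{2d}|}$. Your remark about why the closure cannot be dropped is a welcome clarification that the paper leaves unstated.
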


\begin{remark}
We emphasize that we have identified a polynomial $p$ in $P_{n,2d}$ with its coefficients in the standard monomial basis. (Indeed, the dual cone of $\mathcal{M}_{n,2d}$ should be a subset of $\mathbb{R}^{|N^{n}_{2d}|}$.) As a consequence, the interpretation of the dual cone of nonnegative polynomials as sequences of moments is a consequence of having picked a specific basis to represent the polynomials. If that basis changes, e.g., then the interpretation of the dual cone changes as well.
	\end{remark}


From Theorem \ref{th:pos}, we are able to conclude that it is hard to establish when a sequence $\{y_\alpha\}$ belongs to $\mathcal{M}_{n,2d}$ and hence, when a sequence $\{y_{\alpha}\}$ has a representing measure. Indeed, testing membership to the cone of nonnegative polynomials is a hard task, which implies that testing membership to its dual cone is also hard \cite{friedland2016computational}. However, Theorem \ref{th:pos} also gives us a strategy for coming up with necessary conditions for membership to $\mathcal{M}_{n,2d}$. Indeed, let $\Sigma_{n,2d}$ denote the cone of sum of squares polynomials of degree $2d$ in $n$ variables. We have $\Sigma_{n,2d} \subseteq P_{n,2d}$ and so it follows that $(P_{n,2d})^* \subseteq (\Sigma_{n,2d})^*$, and hence:
$$\mathcal{M}_{n,2d} \subseteq (\Sigma_{n,2d})^*.$$
A necessary condition for membership to $\mathcal{M}_{n,2d}$ is then membership to $(\Sigma_{n,2d})^*$. The latter can be tested using semidefinite programming as we see now.

\begin{definition}
	Given a positive integer $d$, and a truncated sequence $y=(y_{\alpha})_{\alpha \in \mathbb{N}^n_{2d}}$, we define the moment matrix $M_d(y)$ to be a symmetric matrix whose rows and columns are indexed by $\alpha \in \mathbb{N}^{n}_{d}$ and where, for $\alpha,\beta \in \mathbb{N}^n_{d}$, the $(\alpha,\beta)^{th}$ entry of the matrix is $y_{\alpha+\beta}$.
\end{definition}

\begin{theorem}
	Let $$\mathcal{M}_{\succeq,n,2d}\mathrel{\mathop{:}}=\{\{y_{\alpha}\}_{\alpha \in \mathbb{N}^{n}_{2d}}~|~ M_d(y) \succeq 0\}.$$
	We have $(\Sigma_{n,2d})^*=\mathcal{M}_{\succeq,n,2d}.$
\end{theorem}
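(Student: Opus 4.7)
The plan is to identify a polynomial $p$ of degree $2d$ with its coefficient vector $\{p_\alpha\}_{\alpha \in \mathbb{N}^n_{2d}}$, so that the duality pairing between polynomials and sequences is $\langle p, y \rangle = \sum_\alpha p_\alpha y_\alpha$. The bridge between this pairing and the moment matrix is the following key identity: if $z(x)$ denotes the vector of monomials $x^\alpha$ for $\alpha \in \mathbb{N}^n_d$, and if $p$ admits a Gram representation $p(x) = z(x)^T Q z(x)$ for some symmetric matrix $Q$ indexed by $\mathbb{N}^n_d$, then comparing coefficients gives $p_\gamma = \sum_{\alpha+\beta=\gamma} Q_{\alpha,\beta}$, so that
\[
\sum_\gamma p_\gamma y_\gamma \;=\; \sum_{\alpha,\beta} Q_{\alpha,\beta}\, y_{\alpha+\beta} \;=\; \operatorname{tr}(Q\, M_d(y)).
\]
I would prove this identity first, as a small lemma, since both inclusions reduce to it.

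For the inclusion $\mathcal{M}_{\succeq,n,2d} \subseteq (\Sigma_{n,2d})^*$, I would take any $y$ with $M_d(y) \succeq 0$ and any $p \in \Sigma_{n,2d}$. Writing $p = \sum_i q_i^2$ with $\deg q_i \le d$ and collecting the coefficient vectors $v_i$ of each $q_i$ in the monomial basis $z(x)$, we have $p(x) = z(x)^T \bigl(\sum_i v_i v_i^T\bigr) z(x)$, so $p$ admits a Gram matrix $Q = \sum_i v_i v_i^T \succeq 0$. By the identity above, $\sum_\alpha p_\alpha y_\alpha = \operatorname{tr}(Q M_d(y)) \ge 0$, since the trace of the product of two positive semidefinite matrices is nonnegative. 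Hence $y \in (\Sigma_{n,2d})^*$.

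For the converse $(\Sigma_{n,2d})^* \subseteq \mathcal{M}_{\succeq,n,2d}$, I would show that an arbitrary $y$ in the dual cone forces $M_d(y)$ to be positive semidefinite by testing against rank-one Gram matrices. Pick any vector $v \in \mathbb{R}^{|\mathbb{N}^n_d|}$ and set $q(x) = v^T z(x)$, so that $p(x) = q(x)^2$ is a sum of squares with Gram matrix $Q = vv^T$. Applying the identity and the hypothesis on $y$ gives $v^T M_d(y) v = \operatorname{tr}(vv^T M_d(y)) = \sum_\alpha p_\alpha y_\alpha \ge 0$. Since $v$ was arbitrary, $M_d(y) \succeq 0$, so $y \in \mathcal{M}_{\succeq,n,2d}$.

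There is no real obstacle here beyond keeping the index bookkeeping straight: the rows and columns of $M_d(y)$ are indexed by $\mathbb{N}^n_d$ while the sequence $y$ is indexed by $\mathbb{N}^n_{2d}$, and the identification of $p$ of degree $2d$ with a symmetric matrix indexed by degree-$d$ monomials is what makes everything fit. The only subtle point worth flagging in the write-up is that a polynomial may admit several Gram representations, but the identity $\sum_\alpha p_\alpha y_\alpha = \operatorname{tr}(Q M_d(y))$ holds for every such $Q$, so the forward direction does not depend on a choice.
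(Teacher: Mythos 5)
Your proof is correct and follows essentially the same route as the paper's: the central computation in both is the identity $\sum_\alpha p_\alpha y_\alpha = \vec{\sigma}^T M_d(y)\vec{\sigma}$ for a square $p=\sigma^2$ (your trace identity $\operatorname{tr}(QM_d(y))$ is just its Gram-matrix packaging), used once with $M_d(y)\succeq 0$ to get the forward inclusion and once with an arbitrary test vector $v$ to force $M_d(y)\succeq 0$ in the converse. The only cosmetic differences are that the paper proves the first inclusion by showing $\Sigma_{n,2d}\subseteq(\mathcal{M}_{\succeq,n,2d})^*$ and dualizing, and proves the converse contrapositively, whereas you argue both directions directly.
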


\begin{proof}
	We first show that $\Sigma_{n,2d} \subseteq (\mathcal{M}_{\succeq,n,2d})^*$. By taking the dual, it will follow that $\mathcal{M}_{\succeq,n,2d} \subseteq (\Sigma_{n,2d})^*$. Note that by definition of $(\mathcal{M}_{\succeq, n, 2d})^*$, we have
	\begin{align}
	(\mathcal{M}_{\succeq, n, 2d})^*=\{\{p_{\alpha}\}_{\alpha \in \mathbb{N}_{2d}^n}~|~ \sum_{\alpha} p_{\alpha}y_{\alpha} \geq 0,~ \forall \{y_{\alpha}\} \in \mathcal{M}_{\succeq,n,2d}\}
	\end{align}
	We show that if $p(x)=\sigma^2(x)$, where $\sigma$ is some polynomial of degree less than or equal to $d$ in $n$ variables, then $p \in (\mathcal{M}_{\succeq,n,2d})^*$. It immediately follows that any sum of squares polynomial will belong to $(\mathcal{M}_{\succeq,n,2d})^*$ as it is easy to see that if two polynomials are in $(\mathcal{M}_{\succeq,n,2d})^*$ then their sum is also in $(\mathcal{M}_{\succeq,n,2d})^*$. Once again, we identify $p$ with its coefficients. Let $\vec{\sigma}=(\sigma_{\beta})_{\beta \in \mathbb{N}^n_{d}}$ be the coefficients of $\sigma$. We have $p_{\alpha}=\sum_{\{\beta,\gamma~|~\beta+\gamma=\alpha\}} \sigma_{\beta}\sigma_{\gamma}$ for any $\alpha$, and hence for any $\{y_{\alpha}\} \in \mathcal{M}_{\succeq, n,2d}$,
	$$\sum_{\alpha} p_{\alpha} y_{\alpha}=\sum_{\alpha} y_{\alpha} \sum_{\beta+\gamma=\alpha}\sigma_{\beta} \sigma_{\gamma}=\sum_{\beta}\sum_{\gamma} \sigma_{\beta}\sigma_{\gamma} y_{\beta+\gamma}=\vec{\sigma}^T M_d(y) \vec{\sigma}\geq 0$$
	as $M_d(y) \succeq 0$. So $p \in (\mathcal{M}_{\succeq,n,2d})^*$.
	
	We now show that $(\Sigma_{n,2d})^* \subseteq \mathcal{M}_{\succeq,n,2d}$. Suppose that $\{y_{\alpha}\} \notin M_{\succeq,n,2d}$. This means that $M_d(y)\nsucceq 0$, which implies that there exists a vector $\sigma_0$ such that $\sigma_0^TM_d(y)\sigma_0<0$. Let $\sigma$ be a polynomial with coefficients $\sigma_0$ and let $p=\sigma^2$. Clearly, $p \in \Sigma_{n,2d}$. However, by reprising a similar computation as above, $\sum_{\alpha} p_{\alpha}y_{\alpha}<0$. This means that $\{y_{\alpha}\} \notin (\Sigma_{n,2d})^*$.
\end{proof}

Note that, given a sequence of $|N_{2d}^n|$-tuples of numbers $y_{\alpha}$, one can construct the matrix $M_d(y)$ and check its positive semidefiniteness. If it is not positive semidefinite, then $y_{\alpha}$ does not have a representing measure.

This section can be reworked to take into account measures over arbitrary closed basic semialgebraic sets $K$. The dual of the set of truncated sequences that have a representing measure $\mu$ over $K$ will then simply be the set of polynomials nonnegative over $K$. Furthermore, one can construct stronger necessary conditions for a representing measure to exist over $K$ (under some assumptions) by simply dualizing well-known hierarchies of inner approximations to $P_{n,2d}$ based on sum of squares; see \cite{Laurent_survey} and the references contained there. 

\begin{remark}
	Recently, Barak et al. introduced the concept of \emph{pseudoexpectation}; see, e.g., \cite{barak2014sum}. This can be interpreted in the context of what we have discussed so far. In our results and the proofs of these results, we identified the cone $\Sigma_{n,2d}$ with the set of coefficients of sos polynomials of degree $2d$ and in $n$ variables. Thus the cone $\Sigma_{n,2d}$ we considered was a cone over $\mathbb{R}^{\mathbb{N}^n_{2d}}$. In reality, $\Sigma_{n,2d}$ is a cone over the space of polynomials of degree less than or equal to $2d$, denoted by $\mathbb{R}_{2d}[x]$. The dual cone $(\Sigma_{n,2d})^*$ is then the set of linear functionals $L: \mathbb{R}_{2d}[x] \rightarrow \mathbb{R}$ such that $L(s)\geq 0$ for any $s \in \Sigma_{n,2d}$. Note that there is an isomorphism between this set and $\mathcal{M}_{\succeq,n,2d}$ via the correspondence $L(x^{\alpha})=y_{\alpha}$. 
	
	The pseudoexpectation as defined in \cite{barak2014sum} is simply another name for these linear functionals, with the added constraint\footnote{This latter constraint is because $E[1]=\int d\mu=1$ for a probability measure.} that $L(1)=1$.
	We give the formal definition that appears in \cite{barak2014sum} to contrast: A degree-$l$ pseudoexpectation operator $\tilde{E}$ is a linear operator $L$ that maps polynomials in $\mathbb{R}_l[x]$ into $\mathbb{R}$ and satisfies that $L(1)=1$ and $L(P^2)\geq 0$ for every polynomial $p$ of degree at most $l/2$.
	
	The intuition behind the name is easy to explain. As $\mathcal{M}_{n,2d}$ is the dual (up to closure) of $P_{n,2d}$, it follows that for a measure $\mu$, we should have $$E[p(x)]=\int p(x) d\mu=\sum_{\alpha} p_{\alpha} \int x^{\alpha} d\mu \geq 0,$$ for any nonnegative polynomial $p$. Instead, we have $$\tilde{E}[p(x)] \geq 0$$ for any sum of squares polynomial $p$. Though it resembles its counterpart, $\tilde{E}$ is not actually an expectation: it may be the case that $\tilde{E}[p(x)]<0$ for a nonnegative polynomial $p$, which would not happen if it were truly an expectation. 
\end{remark}

\subsection{The univariate case.} The case where $n=1$ is a noteworthy case for the moment problem (along with the cases $2d=2$ and $(n=2, 2d=4)$) since the set of nonnegative and sum of squares polynomials coincide then. In other words, when $n=1$, $\Sigma_{1,2d}=\mathcal{P}_{1,2d}$. It then follows
from Corollary \ref{cor:dual.pos} that $$(\Sigma_{1,2d})^*=cl(\mathcal{M}_{1,2d}).$$ This gives rise to the following theorem, the formulation of which is taken from \cite{PabloGregRekha_BOOK}.
\begin{theorem}\label{th:moment1}
	Let $y=(y_0,y_1,\ldots,y_{2d})$ be a sequence of real numbers such that $y_0=1$. If $y \in \mathcal{M}_{1,2d}$, i.e., if there exists a probability measure $\mu$ on $\mathbb{R}$ such that $y_i$ is the $i^{th}$ moment of $\mu$, then $y \in (\Sigma_{1,2d})^*$, i.e., $$M_d(y)=\begin{bmatrix}
	y_0 & y_1 & y_2 & \ldots & y_d\\
	y_1 & y_2 & y_3 & \ldots & y_{d+1}\\
	y_2 & y_3 & y_4& \ldots & y_{2d+2}\\
	\vdots & \vdots & \vdots & \ddots & \vdots\\
	y_d & y_{d+1} & y_{d+2} & \ldots & y_{2d}
	\end{bmatrix}$$
	is positive semidefinite. Conversely, if $M_d(y)\succ 0$, then $y$ has a representing probability measure $\mu$, i.e., there exists a probability measure $\mu$ such that $y_i$ is the $i^{th}$ moment of $\mu$.
\end{theorem}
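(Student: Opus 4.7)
The plan is to split the theorem into its (easy) forward direction and its (harder) converse, using only the duality machinery already developed in this subsection.

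For the forward direction, it suffices to invoke the inclusion $\mathcal{M}_{n,2d} \subseteq (\Sigma_{n,2d})^* = \mathcal{M}_{\succeq, n, 2d}$ derived earlier. Specialized to $n=1$, this gives $M_d(y) \succeq 0$ for any $y \in \mathcal{M}_{1,2d}$; no univariate structure is used. The hypothesis $y_0 = 1$ plays no role here, beyond recording that the representing measure is a probability measure.

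For the converse, I would exploit the univariate identity $\Sigma_{1,2d} = P_{1,2d}$ highlighted at the start of the subsection to collapse two of the dual cones we have already computed. Dualizing this identity and combining the theorem $(\Sigma_{n,2d})^* = \mathcal{M}_{\succeq, n, 2d}$ with Corollary \ref{cor:dual.pos} gives
\[
\mathcal{M}_{\succeq, 1, 2d} = (\Sigma_{1,2d})^* = (P_{1,2d})^* = \mathrm{cl}(\mathcal{M}_{1,2d}).
\]
So the condition $M_d(y) \succeq 0$ is exactly membership in the closure of $\mathcal{M}_{1,2d}$, and what remains is to upgrade the strict hypothesis $M_d(y) \succ 0$ to membership in $\mathcal{M}_{1,2d}$ itself.

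The upgrade I have in mind is convex-analytic. The linear map $y \mapsto M_d(y)$ from $\mathbb{R}^{2d+1}$ to the space of $(d+1)\times(d+1)$ Hankel matrices is a bijection, so $\{y : M_d(y) \succ 0\}$ is an open subset of $\mathbb{R}^{2d+1}$ coinciding with the interior of $\mathcal{M}_{\succeq, 1, 2d}$. Using the standard fact that $\mathrm{int}(\mathrm{cl}(C)) = \mathrm{int}(C)$ whenever $C$ is a convex set with nonempty interior, and checking that $\mathcal{M}_{1,2d}$ does have nonempty interior (the moments of a standard Gaussian yield a strictly positive definite $M_d$), I conclude $y \in \mathrm{int}(\mathcal{M}_{1,2d}) \subseteq \mathcal{M}_{1,2d}$. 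The normalization $y_0 = 1$ then upgrades the representing measure to a probability measure.

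The main obstacle I anticipate is this last interior step: one must verify that $\mathcal{M}_{1,2d}$ has nonempty interior before invoking $\mathrm{int}(\mathrm{cl}(C)) = \mathrm{int}(C)$. A more constructive alternative that sidesteps this subtlety is Gauss quadrature: strict positivity of $M_d(y)$ lets one orthogonalize $1, x, \ldots, x^d$ under the bilinear form $(p,q) \mapsto \sum p_\alpha q_\beta y_{\alpha+\beta}$; the $d+1$ distinct real roots of the resulting degree-$(d+1)$ orthogonal polynomial serve as support points; and the nonsingular Vandermonde system for matching the first $d+1$ moments yields positive weights, producing an explicit atomic representing probability measure.
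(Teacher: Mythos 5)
Your duality bookkeeping is exactly the paper's: it notes $\Sigma_{1,2d}=P_{1,2d}$, combines Corollary \ref{cor:dual.pos} with the identity $(\Sigma_{n,2d})^*=\mathcal{M}_{\succeq,n,2d}$ to obtain $(\Sigma_{1,2d})^*=cl(\mathcal{M}_{1,2d})$, and then simply states the theorem, deferring the converse to \cite{PabloGregRekha_BOOK} rather than proving it. So the genuinely new content in your proposal is the upgrade from membership in the closure to membership in $\mathcal{M}_{1,2d}$ itself under strict positive definiteness, and that argument is sound: $\{y~|~M_d(y)\succ 0\}$ is indeed the interior of $\mathcal{M}_{\succeq,1,2d}$ (this deserves a one-line check rather than a bare assertion --- for $v\neq 0$ the linear functional $y\mapsto v^TM_d(y)v$ has coefficient $v_{\alpha_0}^2>0$ on $y_{2\alpha_0}$, where $\alpha_0$ is the largest index with $v_{\alpha_0}\neq 0$, so any singular psd point can be perturbed outside the cone and is not interior), and since a convex set and its closure have the same affine hull, full-dimensionality of $cl(\mathcal{M}_{1,2d})$ already forces $\mathcal{M}_{1,2d}$ to have nonempty interior, whence $int(cl(C))=int(C)\subseteq C$; your Gaussian example supplies the full-dimensionality. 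The normalization $y_0=1$ then turns the representing measure into a probability measure, as you say. One wrinkle in your constructive alternative: the degree-$(d+1)$ orthogonal polynomial is not determined by $y_0,\ldots,y_{2d}$ alone, since orthogonality against $x^d$ involves $y_{2d+1}$; one must first fix an auxiliary value of $y_{2d+1}$ (equivalently, work with a quasi-orthogonal polynomial) before extracting the $d+1$ real nodes and positive weights. With that fix the quadrature route is the classical constructive proof, and either route establishes the converse that the paper only cites.
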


Note that positive definiteness of $M_d(y)$ is needed: one can construct sequences $y$ such that $M_d(y) \succeq 0$ but $y$ does not have a representing measure; see \cite[Remark 3.147]{PabloGregRekha_BOOK}. Furthermore, the theorem above can be extended to measures over intervals of $\mathbb{R}$ rather than measures over the whole of $\mathbb{R}$; for this, see again \cite[Section 3.5.3]{PabloGregRekha_BOOK}. Finally, while this result tells us when a sequence $y$ has a representing measure, it does not directly explain how one should go about constructing such a measure. Some information as to how to do this in practice can be found in \cite[Section 3.5.5]{PabloGregRekha_BOOK}.

\begin{example}
	We check the criterion given in Theorem \ref{th:moment1} on a simple example. Consider the probability measure $\mu$ given by $\mu(dx)=f(x)dx$ where $f(x)$ is the probability distribution function of a standard normal distribution. Let $y=(1,0,1,0,3)$: $y$ is the vector of moments of $\mu$ up to degree $2d=4$. We construct 
	$$M_2(y)=\begin{bmatrix}
	1 & 0 & 1\\
	0 & 1 & 0 \\
	1 & 0 & 3
	\end{bmatrix}.$$
As expected $M_d(y) \succeq 0$.
\end{example}

\subsection{Dual formulation of the polynomial optimization problem}\label{subsec:pop} \index{Dual formulation of the POP} Using the theory developed above, one can view unconstrained polynomial optimization problems (POP) of the type
\begin{align}\label{eq:POP}
\min_{x \in \mathbb{R}^n} p(x),
\end{align}
where $p$ is a polynomial of degree $2d$, or equivalently,
\begin{equation}\label{eq:pop.dual}
\begin{aligned}
&\max_{\lambda} \lambda\\
&\text{s.t. } p(x)-\lambda \in P_{n,2d}.
\end{aligned}
\end{equation}
as the dual problem of an optimization problem over $M_{n,2d}$. Indeed, one can rewrite (\ref{eq:POP}) as
\begin{align*}
\min_{\text{prob measures } \mu \text{ over }\mathbb{R}^n} \int p(x) d\mu.
\end{align*}
To see this, let $p^*=\min_{x \in \mathbb{R}^n} p(x)$. Clearly, as $p \geq p^*$, $\int pd\mu \geq p^*$. Conversely, if $x^*$ is a global minimizer of $p$, then by taking $\mu$ to be the Dirac measure $\delta_{x^*}$ at $x^*$, we get $\int pd\delta_{x^*}=p(x^*)=p^*$.

As $p(x)$ is a polynomial of degree $2d$, we have $p(x)=\sum_{\alpha \in \mathbb{N}_{2d}^n} p_{\alpha} x^{\alpha}$. Plugging into the previous expression, we get 
\begin{align*}
\min_{\text{prob measures } \mu \text{ over }\mathbb{R}^n} \sum_{\alpha \in \mathbb{N}^n_{2d}} p_{\alpha} \int x^{\alpha} d\mu.
\end{align*}
One can then stop dealing with the probability measure $\mu$ itself, but only with the moments $y_{\alpha}\mathrel{\mathop{:}}=\int x^{\alpha} d\mu$, provided that $\{y_{\alpha}\}$ has a representing probability measure. The problem becomes:
\begin{equation}\label{eq:pop.primal}
\begin{aligned}
&\min_{y_{\alpha}} \sum_{\alpha} p_{\alpha} y_{\alpha}\\
&\text{s.t. } \{y_{\alpha}\} \in \mathcal{M}_{n,2d} \text{ and } y_{0}=1.
\end{aligned}
\end{equation}
This problem is dual to (\ref{eq:pop.dual}). Indeed, as $P_{n,2d}=(\mathcal{M}_{n,2d})^*$, we have $p(x)-\lambda \in P_{n,2d} \Leftrightarrow \sum_{\alpha} p_\alpha y_{\alpha} -\lambda y_0 \geq 0,~\forall y_{\alpha} \in \mathcal{M}_{n,2d}$. This latter inequality is then equivalent to $\sum_{\alpha} p_{\alpha}y_{\alpha}\geq \lambda$ as $y_0=1$. We refer the reader to \cite{lasserre_moment} for more information on this dual formulation.

To make the problem tractable, one can replace $\mathcal{M}_{n,2d}$ by its outer approximation $\mathcal{M}_{\succeq, n,2d}$ in (\ref{eq:pop.primal}):
\begin{equation}\label{eq:pop.primal.sdp}
\begin{aligned}
&\min_{y_{\alpha}} \sum_{\alpha} p_{\alpha} y_{\alpha}\\
&\text{s.t. } \{y_{\alpha}\} \in \mathcal{M}_{\succeq,n,2d} \text{ and } y_{0}=1.
\end{aligned}
\end{equation}
 Doing this, we obtain lower bounds on (\ref{eq:pop.primal}) via semidefinite programming. This is equivalent to replacing in the dual (\ref{eq:pop.dual}) $P_{n,2d}$ by $\Sigma_{n,2d}$:
 \begin{equation}\label{eq:pop.dual.sdp}
 \begin{aligned}
&\max_{\lambda} \lambda\\
&\text{s.t. } p(x)-\lambda \in \Sigma_{n,2d},
 \end{aligned}
 \end{equation} 
  which gives us lower bounds on (\ref{eq:pop.dual}) using semidefinite programming. The constrained case where $x \in K$, with $K$ being a closed basic semialgebraic set, can also be considered if we work with measures over $K$ instead, as was mentioned earlier.

\subsubsection{Extracting optimal solutions using the dual}
We present a simple method, described in \cite{parrilo2003minimizing}, to extract optimal solutions of the unconstrained POP (\ref{eq:POP}) under certain conditions. This method does rely on duality, but not sensu stricto on the dual formulation of the POP presented in (\ref{eq:pop.primal}), rather on semidefinite programming duality. As we shall see momentarily, this is because it lifts the initial problem into the space of quadratic functions and each nonnegative quadratic function can be identified with a positive semidefinite matrix. Another method, that applies to constrained POPs and which does rely on the exact dual formulation of the POP presented above (more specifically, on its constrained version) can be found in \cite{henrion2005detecting}. 

We remark that our presentation will be a very high level overview of the material covered in \cite{parrilo2003minimizing}, which we refer the reader to for a more complete and thorough treatment. Consider the unconstrained POP in (\ref{eq:POP}). As $p$ is a polynomial of degree $2d$ and in $n$ variables, it can be written as 
$$p(x)=z(x)^TPz(x)$$
where $z(x)$ is the vector of standard monomials of degree up to $d$ in $n$ variables and $P$ is a symmetric matrix, which is generally not unique. (Throughout we will be working with the standard monomial basis for convenience, but the method remains valid when other bases are used.) As a consequence, (\ref{eq:POP}) is exactly 
$$\min_x z(x)^TPz(x)$$
and if we substitute a vector of variables $z$ for $z(x)$, it is equivalent to
\begin{equation} \label{eq:quad}
\begin{aligned}
&\min_z z^TPz\\
&\text{s.t. } z^TG_0z=1,~z^TG_1z=\ldots=z^TG_rz=0
\end{aligned}
\end{equation}
where $G_0,\ldots,G_r$ are matrices that encode the existing relationships between entries of $z(x)$. As an example, if $p(x)=2+x_1+x_1^4+1.5x_2^4-x_1^3x_2+2x_1^2x_2^2$ and $(z_0,z_1,z_2,z_3,z_4,z_5)$ stands in for $(1,x_1,x_2,x_1^2,x_1x_2,x_2^2)$, (\ref{eq:POP}) can be rewritten as
\begin{align*}
&\min_{z} 2z_0^2+z_0z_1+z_3^2+1.5z_5^2-z_3z_4+2z_3z_5\\
&\text{s.t. } z_0^2=1, z_3z_5=z_2^2, z_1z_2=z_0z_4, \text{ etc}.
\end{align*}
By letting $Z=zz^T$, (\ref{eq:quad}) is equivalent to the following optimization problem:
\begin{equation} \label{eq:sdp.rank}
\begin{aligned}
&\min_Z \mbox{tr}(PZ)\\
&\text{s.t. } \mbox{tr}(G_0Z)=1,~\mbox{tr}(G_1Z)=\ldots=\mbox{tr}(G_rZ)=0\\
&Z \succeq 0, \mbox{rank}(Z)=1.
\end{aligned}
\end{equation}
If we remove the latter condition on the rank, we obtain a relaxation of (\ref{eq:sdp.rank}), which is a semidefinite program. If the optimal solution $Z^*$ happens to be of rank 1, i.e. $Z^*=z^{*}z^{*T}$, then we can extract an optimal solution to the initial problem by reading off the entries of $z^*$ which correspond to $x_1,\ldots,x_n$ in $z(x)$. 

Where does duality come into play here? One can in fact view (\ref{eq:sdp.rank}) (resp. its relaxation) as an analog to (\ref{eq:pop.primal}) (resp. (\ref{eq:pop.primal.sdp})), and so as a ``dual formulation'' of (\ref{eq:pop.dual}). Furthermore, the dual of the relaxation of (\ref{eq:sdp.rank}) is given by
\begin{align*}
&\max_{\lambda, \mu_1,\ldots,\mu_r} \lambda\\
&\text{s.t. } P-\lambda G_0-\sum_{i=1}^r \mu_i G_i \succeq 0
\end{align*} 
which provides lower bounds on (\ref{eq:pop.dual}) and can be viewed as an analog to (\ref{eq:pop.dual.sdp}).

\section{Probability bounds given moments of a random variable and applications to option pricing} \label{sec:bounds}\index{Probability bounds given moments of a random variable}

In this section, we consider a measure space $(\Omega,\mathcal{F},p)$ where $p$ is a probability measure, a random variable $X$ over $(E,\mathcal{E})$ where $E$ is a subset of $\mathbb{R}$ and $\mathcal{E}$ is a $\sigma$-algebra over $E$, and its induced measure $p_X$. Note that though we are considering a random \emph{variable} here (and not a random \emph{vector}), our results can easily be extended to the multivariate case.
Let  $\{y_{k}\}_{k \in \{0,\ldots,K\}}$ be a sequence of scalars and let $\Phi$ be the set of probability measures 
\begin{align}\label{eq:def.Phi}
\Phi\mathrel{\mathop{:}}=\{p_X~|~\int_{E} x^{k} dp_X(x)=y_{k}, \forall k \in \{0,\ldots,K\}\}.
\end{align}
The set $\Phi$ can also be identified with the set of random variables $X$ whose order-$k$ moments coincide with $y_{k}$ for $k \in \{0,\ldots, K\}$. We use the notation $\Phi$ for both sets and assume that $\Phi$ is non-empty (or in other words, $\{y_{k}\}$ always has at least one representing measure). The problem we are considering is then: given a sequence $\{y_{k}\}_{k \in \{0,\ldots, K\}}$ as described above, and a set $S \subseteq E$ described by polynomial inequalities, derive a ``tight'' upper bound on $$p_{X}(S)=\mathrel{\mathop{:}}p(X \in S),$$ i.e., derive $\sup_{p_{X} \in \Phi} p_{X}(S)=\sup_{X \in \Phi} p(X \in S)$.

The problem of using moments of a random variable to upper bound the probability that it belongs to a certain set is a problem that has a rich history within the field of probability theory. Two of the most ubiquitous inequalities in the field, namely that of Markov and Chebychev, are examples of this. Indeed, the Markov inequality states that, for any nonnegative random variable $X$ and positive scalar $a$,
\begin{align}\label{eq:Markov}
p(X \geq a) \leq \frac{E[X]}{a}.
\end{align}
Similarly, the Chebychev inequality states that for any random variable $X$,
\begin{align}\label{eq:Chebychev}
p(|X-E[X]|>t) \leq \frac{var(X)}{t^2}.
\end{align}
Note that the upper bounds on the probabilities in both inequalities only depends on the distribution of the random variable $X$ via its moments.

How can we tackle the more general problem stated above? It is straightforward to see that the problem $\sup_{p_{X} \in \Phi} p_X(S)$ can be formulated as
\begin{equation*}
\begin{aligned}
&\sup_{p_X} \int_{E} \textbf{1}_S dp_X\\
&\text{s.t. } \int x^{k} dp_X(x)=y_{k}, \forall k \in \{0,\ldots, K\},
\end{aligned}
\end{equation*}
where $\textbf{1}_S$ refers to the indicator function of $S$, i.e., a function that takes value $1$ on $S$ and $0$ elsewhere. The dual to this problem reads
\begin{equation}\label{eq:dual.bounds}
\begin{aligned}
&\inf_{\lambda_{k}} \sum_{k=0}^{K} \lambda_{k}y_{k}\\
&\text{s.t. } \sum_{k=0}^{K} \lambda_{k} x^{k} \geq \textbf{1}_S, \forall x \in E.
\end{aligned}
\end{equation}
Indeed, weak duality is readily shown:
$$\int_{E} \textbf{1}_S dp_X \leq \int_{E} \sum_{k}\lambda_{k} x^{k} dp_X=\sum_{k}\lambda_{k} \int_{E} x^{k}dp_X =\sum_{k}\lambda_{k} y_{k}.$$ Strong duality also holds under certain mild conditions; see, e.g., \cite{bertsimas2005optimal}. If we define $\lambda$ to be the polynomial $\lambda(x)=\sum_{k}\lambda_{k} x^{k}$, we can rewrite (\ref{eq:dual.bounds}) as
\begin{equation*}
\begin{aligned}
\min_{\lambda} &\sum_{k} \lambda_{k} y_{k}\\
\text{s.t. } &\lambda(x)-1 \geq 0, \forall x\in S\\
&\lambda(x) \geq 0, \forall x \in E.
\end{aligned}
\end{equation*} 
If we assume that $E$ and $S$ are basic semialgebraic sets, then this problem can be tacked using sum of squares polynomials. Indeed, one can enforce nonnegativity of the polynomials $\lambda(x)-1$ and $\lambda(x)$ over $E$ and $S$ respectively by using the sum of squares certificates of nonnegativity that have previously been covered. In our case, i.e., the case where $X$ is a random variable, this can be done with no loss \cite{PabloGregRekha_BOOK}. For more complex cases such as the multivariate case (i.e., $X$ is a random vector), we refer the reader to \cite{bertsimas2005optimal,lasserre2002bounds}. 
\begin{example}
	We use these ideas to derive tight upper bounds on $p(X\geq a)$ and $p(|X-E[X]|>t)$. In the process, we will see whether the upper bounds provided by the Markov inequality (\ref{eq:Markov}) and the Chebychev inequality (\ref{eq:Chebychev}) are tight.
	
Let $X$ be a nonnegative random variable whose distribution is unknown but its first moment $E[X]$ is known. We would like to find an upper bound on $p(X\geq a)$ for a given scalar $a>0$. Following our previous notation, we have $K=1$, $S$ is $[a,\infty)$, and $E$, which is where $X$ takes its values, is $[0,\infty)$. The fact that $K=1$ implies that $\lambda(x)$ is an affine polynomial, i.e., $\lambda(x)=\lambda_0+\lambda_1x$. Hence the problem to solve is the following:
	\begin{equation*}
	\begin{aligned}
	\min_{\lambda} ~&\lambda_0 +\lambda_1 E[X]\\
	\text{s.t. } &\lambda(x)-1 \geq 0,~\forall x\geq a\\
	&\lambda(x) \geq 0,~\forall x \geq 0.
	\end{aligned}
	\end{equation*}
	(Note that $y_0=1$ as we are considering a probability measure.) One can rewrite the constraints exactly (see \cite[Section 3.3.1]{PabloGregRekha_BOOK}) as
	\begin{equation} \label{eq:Markov.opt}
	\begin{aligned}
	\min_{\lambda,\sigma,\tau,\sigma',\tau'} ~&\lambda_0 +\lambda_1 E[X]\\
	\text{s.t. } &\lambda(x)-1=\sigma +\tau \cdot (x-a), \sigma \geq 0, \tau \geq 0\\
	&\lambda(x)=\sigma'+\tau'\cdot x,  \sigma' \geq 0, \tau \geq 0,
	\end{aligned}
	\end{equation}
	which is a linear program.
	It is quite easy to see that $$\lambda(x)=\frac{1}{a}x$$ is feasible for (\ref{eq:Markov.opt}). Indeed, $\frac{1}{a} \geq 0$ and $\lambda(x)-1=\frac{1}{a}(x-a)$. The value of the objective is then $\frac{E[X]}{a}$. Hence, $p(X \geq a) \leq E[X]/a$. Is this upperbound tight? It is in the case where $E[X]/a \leq 1$. Indeed, in that case define:
	$$X_0=\begin{cases} &a \text{ with probability } E[X]/a \\
	&0 \text{ with probability } 1-E[X]/a
	\end{cases}.$$
	We have that $X_0 \in \Phi$ as $E[X_0]=E[X]$. Furthermore, $p(X_0 \geq a)=\frac{E[X]}{a}$. When $E[X]/a \geq 1$, then the bound that is tight is simply $1$. This is always an upperbound (take $\lambda_0=1$ and $\lambda_1=0$) and it is tight in this case as 
	$X_0=E[X]$ with probability $1$ belongs to $\Phi$ and achieves the bound $p(X_0 \geq a)=1$. Hence, a tight upper bound on $p(X \geq a)$ using first order information is given by 
	$$\sup_{X \in \Phi} p(X\geq a)=\begin{cases} E[X]/a &\text{ if } E[X]/a \leq 1 \\
	1 &\text{ if } E[X]/a >1
	\end{cases}.$$
	The first case corresponds to the Markov bound.

	Now consider a random variable $X$ whose distribution is unknown but whose first and second order moments, $E[X]$ and $E[X^2]$, are known. Given a scalar $t$, we would like an upper bound on 
	$$p(|X-E[X]| \geq t)=p(\{X \geq t+E[X]\} \cup \{X \leq -t+E[X]\}),$$
	that involves only $E[X]$ and $E[X^2]$. In this case, $K=2$, $S=(-\infty, -t+E[X]] \cup [t+E[X],+\infty)$, $E=\mathbb{R}$, and we have $\lambda(x)=\lambda_0 x+\lambda_1 x+\lambda_2 x^2$. The problem can then be written as
	\begin{equation}\label{eq:chebychev.opt}
	\begin{aligned}
	\min_{\lambda} ~&\lambda_0 +\lambda_1 E[X]+\lambda_2 E[X^2]\\
	\text{s.t. } &\lambda(x)-1 \geq 0,~\forall x \in (-\infty, -t+E[X]] \cup [t+E[X],+\infty)\\
	&\lambda(x) \geq 0, \forall x \in \mathbb{R}.
	\end{aligned}
	\end{equation}
	This is equivalent to (see, e.g., \cite[Theorem 3.72]{PabloGregRekha_BOOK})
	\begin{equation}\label{eq:chebychev.opt1}
	\begin{aligned}
	\min_{\lambda,\sigma,\tau,\sigma',\tau'} ~&\lambda_0 +\lambda_1 E[X]+\lambda_2 E[X^2]\\
	\text{s.t. } &\lambda(x)-1= \sigma(x)+\tau \cdot (x-t-E[X]), \sigma \text{ quadratic and sos, } \tau \geq 0,\\
	&\lambda(x)-1= \sigma'(x)+\tau'\cdot (E[X]-t-x), \sigma' \text{ quadratic and sos, } \tau' \geq 0,\\
	&\lambda \text{ sos},
	\end{aligned}
	\end{equation}
	which is a semidefinite program. However, given the simplicity of the case involved, it is easy to get intuition as to what the correct polynomial $\lambda$ should be from (\ref{eq:chebychev.opt}). We take $$\lambda(x)=\left(\frac{x-E[X]}{t}\right)^2.$$
	It is immediate that $\lambda$ is sos and furthermore, we have $$\lambda(x)-1=\left(\frac{x-t-E[X]}{t}\right)^2+\frac{2}{t}(x-t-E[X])$$ and $$\lambda(x)-1=\left(\frac{E[X]-t-x}{t}\right)^2+\frac{2}{t}(E[X]-t-x)$$
	with $2/t \geq 0$. It follows that $\lambda$ is a feasible solution to (\ref{eq:chebychev.opt}) achieving the bound of $$\frac{var(X)}{t^2}.$$ So, $var(X)/t^2$ is always a valid upper bound on $p(|X-E[X]|>t)$. Is it tight? Again, the answer is yes, but only when $var(X) \leq t^2$. Indeed, consider
	$$X_0=\begin{cases} E[X]+t &\text{ with probability } \frac{var(X)}{2t^2} \\
	E[X]-t &\text{ with probability } \frac{var(X)}{2t^2}\\
	E[X] &\text{ with probability } 1-\frac{var(X)}{t^2}
	\end{cases}.$$
	We have $X_0 \in \Phi$ as $E[X_0]=E[X]$ and $E[X_0^2]=E[X^2]$. Furthermore, $p(|X-E[X]|>t)=var(X)/t^2$. In the case where $var(X) \geq t^2$, a tight upper bound is $1$. It is easy to see that $1$ is always a valid upper bound by taking $\lambda_0=1,\lambda_1=0,\lambda_2=0$ and it is tight when $var(X)\geq t^2$ as we can take $$X_0=\begin{cases} E[X]+\sqrt{var(X)} &\text{ with probability } 1/2 \\ E[X]-\sqrt{var(X)} &\text{ with probability } 1/2
	\end{cases}.$$
	We have $X_0 \in \Phi$ as $E[X_0]=E[X]$ and $E[X_0^2]=E[X^2]$. Furthermore $p(|X_0-E[X]| \geq t)=1$. Putting everything together, a tight upper bound on $p(|X-E[X]|\geq t)$ using first and second order information is given by 
	$$\sup_{X \in \Phi} p(|X-E[X]| \geq t)=\begin{cases} var(X)/t^2 &\text{ if } var(X)/t^2 \leq 1\\ 1 &\text{ if } var(X)/t^2 \geq 1 \end{cases}.$$
	The first case is the Chebychev inequality.
\end{example}

\subsection{Applications to option pricing.} \index{Option pricing} Let $X$ be the (random) price of an asset and $p_X$ its probability distribution. Though $p_X$ is unknown, we assume that the first and second order moments of $X$, which we denote by $y_1$ and $y_2$, are known (e.g., estimated from past data). The zero-th order moment of $X$ is trivially $y_0=1$. We now consider a European call option on the asset with strike price $k$. Recall that a European call option is a derivative security which gives the buyer of the call two options on the day it expires: either (s)he buys a fixed amount of the asset at price $k$, or (s)he does nothing. Hence, the payoff of the buyer of the option will be $\max(0,X-k)$ where $X$ is the price of the asset on the day the call expires: indeed, if the price of the asset is greater than $k$, then the buyer will use his or her option to get it at the reduced price of $k$, thus making $X-k$; if the price of the asset is less than $k$ however, then the buyer will chose to not use his or her option, thus making $0$. A fair price for this option would be
$$E_{p_X}[\max\{0,X-k\}],$$
where the expectation is taken with respect to the unknown probability distribution of $X$. Note that with such a price, the seller does not make a profit on average, but simply breaks even. However, to hedge against uncertainty in the distribution of $X$, the seller choses to pick 
$$\sup_{p_{X} \in \Phi} E_{p(X)}[\max\{0,X-k\}]$$
where $\Phi$ is as in (\ref{eq:def.Phi}) with $E=[0,+\infty)$ (the price of the asset is always nonnegative) and $K=2$. As discussed before, the problem above can be formulated as:
\begin{equation*}
\begin{aligned}
\max_{p_X} &\int_{\mathbb{R}^+} \max\{0,x-k\} dp_X(x)\\
\text{s.t. } &\int_{\mathbb{R}^+} x^k dp_X(x)=y_i, i=0,1,2.
\end{aligned}
\end{equation*}
The dual to this problem is 
\begin{equation*}
\begin{aligned}
\min_{\lambda_k} &\sum_{k=0}^2  \lambda_k y_k\\
\text{s.t. } &\sum_{k=0}^2 \lambda_k x^k \geq \max\{0,x-k\}, \forall x\in \mathbb{R}^+.
\end{aligned}
\end{equation*}
This is equivalent to 
\begin{equation*}
\begin{aligned}
\min_{\lambda_k} &\sum_{k=0}^2  \lambda_k y_k\\
\text{s.t. } &\sum_{k=0}^2 \lambda_k x^k \geq 0, \forall x \in [0,k]\\
&\sum_{k=0}^2 \lambda_k x^k \geq x-k, \forall x \in [k,+\infty),
\end{aligned}
\end{equation*}
which can be solved using semidefinite programming. We refer the interested reader to \cite{bertsimas2002relation} for other examples of problems of this type. Other areas where optimal bounds on probabilities of events can be useful are decision analysis \cite{smith1995generalized} and queuing theory \cite{whitt1984approximations}.

\part{Statistics and machine learning} \index{Statistics and machine learning}

Both of the sections in this part revolve around \emph{regression}. Regression is one of the most fundamental problems in statistics, with applications in many different areas, including the social and physical sciences. We briefly review the problem here. Let $\{(x_i,y_i)\}_{i=1,\ldots,m}$ be a series of data points with $x_i \in \mathbb{R}^n$ being a feature vector, and $y_i \in \mathbb{R}$ being the output variable. We denote by $x_i^j$ the $j^{th}$ component of the vector $x_i$. It is assumed that there is a relationship between $x_i$ and $y_i$ of the form $$y_i=f(x_i)+\epsilon_i,~i=1,\ldots,m$$ where $\epsilon_i$ is some random noise with $E[\epsilon_i]=0$, finite variance, and $\epsilon_i$ independent from $\epsilon_j$. The goal of regression is to find a function $f$ (called a \emph{regressor}) within a class of functions $\mathcal{F}$ such that the error between $f(x_i)$ and $y_i$ is minimized. The notion of error can be, e.g., that of least squares error, which gives us the problem
\begin{equation}\label{eq:regression}
\min_{f \in \mathcal{F}} \sum_{i=1}^m (y_i-f(x_i))^2.
\end{equation}
When $\mathcal{F}$ contains functions that are completely described by a set of parameters $\theta \in \mathbb{R}^p$, the regression is called \emph{parametric} and the optimization can be done over the parameters instead of over $\mathcal{F}$. The case where $$\mathcal{F}=\{f~|~f(z)=\theta_0+\theta_1z_1+\ldots+\theta_n z_n, \text{ where } \theta_0,\ldots,\theta_n \in \mathbb{R}\},$$ for example, is linear regression and finding $f$ amounts to solving an unconstrained convex quadratic program. In Section \ref{sec:shape.constr.reg}, we will show how sum of squares polynomials can be used to enforce shape constraints on the regressor $f$. In Section \ref{sec:opt.design}, we will see how we can use sum of squares techniques to optimally pick the feature vectors $x_i$.

\section{Shape-constrained regression}\label{sec:shape.constr.reg} \index{Shape-constrained regression}

In this section, we consider the regression problem described above and assume that the feature vectors $x_i$ belong to a full-dimensional box $B$ in $\mathbb{R}^n$. Our goal is to fit a function $f$ to the data that minimizes the least squares error within a class of functions $\mathcal{F}$ that have a specific shape (e.g., convex over the box $B$ or monotonous over $B$ in certain directions). We call this problem \emph{shape-constrained} regression. Shape-constrained regression is a very natural problem to consider. In economics for example, if one wants to model a utility function by fitting a regressor to data, then it would make sense to enforce concavity of the regressor. Likewise, we can readily imagine that a number of outputs would depend monotonically on inputs (think, e.g., of the Body Mass Index (BMI) of a person with respect to his or her calorie intake, or the quantity of honey produced in a hive as a function of number of bees). Because of its omnipresence, there have been a number of methods developed to address this problem; see \cite{gupta2016monotonic, hannah2013multivariate,seijo2011nonparametric,lim2012consistency,mazumder2017computational,ghall}. Here, we consider a method that relies on sum of squares programming, developed in, e.g., \cite{magnani2005tractable,mihaela}. One of its main advantages is that it scales polynomially in the number of features of the problem, which is often a caveat in other methods. We discuss it in more depth below. 

Let's consider first the case where we would like to enforce monotonicity of our regressor over $B$ with respect to component $j$, i.e., we want $z_j \mapsto f(z_1,\ldots,z_{j-1}, z_j,z_{j+1},\ldots,z_n)$ to be increasing for all $(z_1,\ldots,z_{j-1},z_{j+1},\ldots,z_n)$ in the appropriate domain. We assume here that $f$ is continuously differentiable. This is then equivalent to imposing that $$\frac{\partial f(z)}{\partial z_j} \geq 0, ~\forall z \in B.$$ If $\rho \in \mathbb{R}^n$ is a vector that encodes the monotonicity profile of $f$ with respect to each one of its variables, i.e., $\rho_j=1$ (resp. $0$, $-1$) if $f$ is increasing (resp. non-monotonic, decreasing) with respect to component $j$, then the monotonicity-constrained regression problem can be written:
\begin{equation*}
\begin{aligned}
&\min_{f} \sum_{i=1}^m (y_i-f(x_i))^2\\
&\text{s.t. } \rho_j\frac{\partial f(z)}{\partial z_j} \geq 0, ~\forall y \in B.
\end{aligned}
\end{equation*}
To make the problem amenable to computation, we restrict ourselves to searching over the space of polynomial functions of degree $d$, i.e., $f$ is assumed to be a polynomial of degreee $d$. The problem remains hard to solve however because of the nonnegativity constraint over the box. Indeed, one can show that even testing whether a polynomial $f$ of degree $d$ has monotonicity profile $\rho$, over a box $B$ is NP-hard, for $d$ as low as 3 \cite{mihaela}. We consequently replace the nonnegativity constraint by a constraint that involves sum of squares polynomials---see \cite[Section 3.4.4]{PabloGregRekha_BOOK} for different ways to do this---and the problem becomes a semidefinite program. The theorem below gives an idea as to the quality of these successive approximations.
\begin{theorem}\cite{mihaela}\label{th:mon}
	Let $f$ be a continuously differentiable function with a given monotonicity profile $\rho$ over $B$. For any $\epsilon>0$, there exists an integer $d$ and a polynomial $p$ of degree $d$ such that $$\max_{x \in B} |f(x)-p(x)|<\epsilon$$ and such that $p$ has same monotonicity profile $\rho$ over $B$. Furthermore, this monotonicity profile can be certified using a sum of squares certificate.
\end{theorem}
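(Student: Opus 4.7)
The plan is to first produce a polynomial approximant with the desired monotonicity profile, and then upgrade weak positivity of its partial derivatives into a sum of squares certificate. The construction proceeds in three stages.

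\textbf{Stage 1 (perturb to strict monotonicity).} By hypothesis, $\rho_j \cdot \partial f / \partial z_j \geq 0$ on $B$ for every $j$ with $\rho_j \neq 0$. Fix $\delta > 0$ (to be chosen small momentarily) and set
$$\tilde f(z) \mathrel{\mathop{:}}= f(z) + \delta \sum_{j:\rho_j \neq 0} \rho_j\, z_j.$$
Then $\sup_{z \in B} |\tilde f(z) - f(z)|$ is $O(\delta)$ with a constant depending only on $n$ and on $B$, and for every monotonic direction $j$ one has $\rho_j \cdot \partial \tilde f / \partial z_j \geq \delta$ throughout $B$.

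\textbf{Stage 2 ($C^1$ polynomial approximation).} Invoke a $C^1$ version of the multivariate Weierstrass theorem on the box $B$: for any $\eta > 0$ there exists a polynomial $p$ (of some degree $d = d(\eta)$) such that
$$\sup_B |p - \tilde f| < \eta \quad\text{and}\quad \sup_B \left| \frac{\partial p}{\partial z_j} - \frac{\partial \tilde f}{\partial z_j} \right| < \eta \text{ for every } j.$$
This follows either from the fact that multivariate Bernstein polynomials on a box converge in $C^1$ norm to any $C^1$ target, or by applying the ordinary Weierstrass theorem to each $\partial \tilde f / \partial z_j$ and reconstructing a polynomial antiderivative coordinate by coordinate. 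Choosing first $\delta$ small enough that the Stage 1 error is below $\epsilon/2$, and then $\eta < \min(\epsilon/2, \delta/2)$, the triangle inequality yields $\sup_B |p - f| < \epsilon$, while for every $j$ with $\rho_j \neq 0$,
$$\rho_j \frac{\partial p}{\partial z_j}(z) \geq \rho_j \frac{\partial \tilde f}{\partial z_j}(z) - \eta \geq \delta - \eta \geq \frac{\delta}{2} > 0 \text{ on } B.$$
Thus $p$ has the monotonicity profile $\rho$ on $B$, with a strictly positive margin.

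\textbf{Stage 3 (sos certificate via Putinar).} The box $B$ is a compact basic semialgebraic set describable by polynomial inequalities $g_k(z) \geq 0$ (for instance $g_j(z) = (z_j - a_j)(b_j - z_j)$, augmented if needed by a large ball containment constraint) whose associated quadratic module is Archimedean. Since each polynomial $\rho_j \cdot \partial p / \partial z_j$ is \emph{strictly} positive on $B$, Putinar's Positivstellensatz furnishes a representation
$$\rho_j \frac{\partial p}{\partial z_j}(z) = \sigma_{0,j}(z) + \sum_k \sigma_{k,j}(z)\, g_k(z), \qquad \sigma_{k,j} \text{ sos},$$
which is precisely the kind of sum of squares certificate alluded to in the statement and searchable by semidefinite programming.

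The main technical point is Stage 2: one needs a single polynomial that simultaneously approximates $\tilde f$ \emph{and} each of its first partial derivatives uniformly on $B$, which is a standard but non-trivial invocation of multivariate polynomial density in $C^1$. Stage 1 is the key structural trick: it converts the weak monotonicity inherited from $f$ into the strict positivity required by Putinar in Stage 3, which would otherwise be the point at which the argument fails.
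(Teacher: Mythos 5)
The paper states this theorem without proof, deferring entirely to \cite{mihaela}, so there is no in-text argument to compare against; your proposal is, however, a correct and complete proof, and it follows what is essentially the argument of the cited reference: perturb $f$ by a small strictly monotone term to turn the weak derivative inequalities into strict ones, approximate in the $C^1$ norm on the box by a polynomial (Bernstein), and then invoke Putinar's Positivstellensatz, which applies because the quadratic module generated by $(z_j-a_j)(b_j-z_j)$ is Archimedean for a compact box. The one place to be slightly careful is your parenthetical alternative to Bernstein in Stage 2 (approximating each $\partial \tilde f/\partial z_j$ separately and ``reconstructing an antiderivative''), which requires a compatibility argument you do not supply; but since Bernstein convergence in $C^1$ is your primary route, the proof stands as written.
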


Let us consider now the case where we would like to enforce convexity of our regressor $f$ over $B$. We assume that $f$ is twice continuously differentiable and that $H_f$ denotes the Hessian of $f$. This is then equivalent to imposing $$H_{f}(y) \succeq 0, \forall y \in B,$$ 
which is in turn equivalent to $$z^TH_f(y)z \geq 0, ~\forall z \in \mathbb{R}^n,~\forall y\in B.$$
Hence the convexity-constrained regression problem can be written
\begin{equation*}
\begin{aligned}
&\min_{f} \sum_{i=1}^m (y_i-f(x_i))^2\\
&\text{s.t. } z^TH_f(y)z \geq 0,~\forall z \in \mathbb{R}^n,~\forall y \in B.
\end{aligned}
\end{equation*}
We follow the same scheme as previously: we restrict ourselves to polynomial functions, and then replace the nonnegativity constraint of the polynomial (in $z$ and $y$) $z^TH_f(y)z$ by a constraint that involves sum of squares polynomials. Indeed, as before, the problem of testing whether a polynomial of degree $d$ is convex over a box is NP-hard, even for $d=3$  \cite{ahmadi2018complexity}. One can show an analogous result to Theorem \ref{th:mon} for convexity-constrained regression.

\begin{remark}
	Let $f$ be a polynomial in $n$ variables. If $y^TH_f(x)y$ is constrained to be a sum of squares (as a polynomial in $x$ and $y$) then $f$ is said to be \emph{sos-convex}. This is a sufficient condition for (global) convexity as $y^TH_f(x)y$ sos implies that $y^TH_f(x)y \geq 0, \forall x,y \in \mathbb{R}^n$, which implies that $H_f(x) \succeq 0, \forall x \in \mathbb{R}^n$. Optimizing over the set of sos-convex polynomials is a semidefinite program; see \cite{AAA_PP_table_sos-convexity} for more information on the concept of sos-convexity. The above example uses a variant of sos-convexity: we wish to find sufficient conditions for convexity \emph{over a box}. 
	
	Sos-convexity is a notion that can be used in many other applications. For example, some problems in computer vision require fitting a convex shape to a cloud of $3D$ data points in such a way that the volume of this shape is as small as possible: this can be done e.g. by requiring that these points belong to the sublevel set of an sos-convex polynomial; see \cite{magnani2005tractable,ahmadi2016geometry}.
\end{remark}
\begin{remark}
	It goes without saying that both types of constraints (monotonicity and convexity) can be combined if one happens to have the appropriate information.
\end{remark}

\begin{example}
	We now give an example, taken from \cite{mihaela}, relating to the prediction of weekly wages from past data. The data used comes from the 1988 Current Population Survey and is freely
	available under the name ex1029 in the Sleuth2 R package \cite{sleuth}. It contains $25361$ observations and $2$ numerical features: years of experience and years of education. We expect wages to increase with respect to years of education and be concave with
	respect to years of experience. We run both an unconstrained polynomial regression (denoted by UPR), i.e., $\mathcal{F}$ is the set of polynomials of a certain degree in (\ref{eq:regression}), and a convexity-constrained and monotonocity-constrained regression (denoted by Hybrid and described above) on the data. This is done by computing the Root Mean Squared Error (RMSE) for the data with 10-fold cross validation. The results are given in Figure \ref{fig:rmse_wage} with varying degrees of the polynomial regressor. Note that for the training data, obviously UPR performs better than Hybrid as it is less constrained and can overfit. The Hybrid method however has a much better generalization error than UPR. 
\end{example}

\begin{figure}[h!]
	\centering
	\subfigure[RMSE on training data]{\includegraphics[width = 0.49\textwidth]{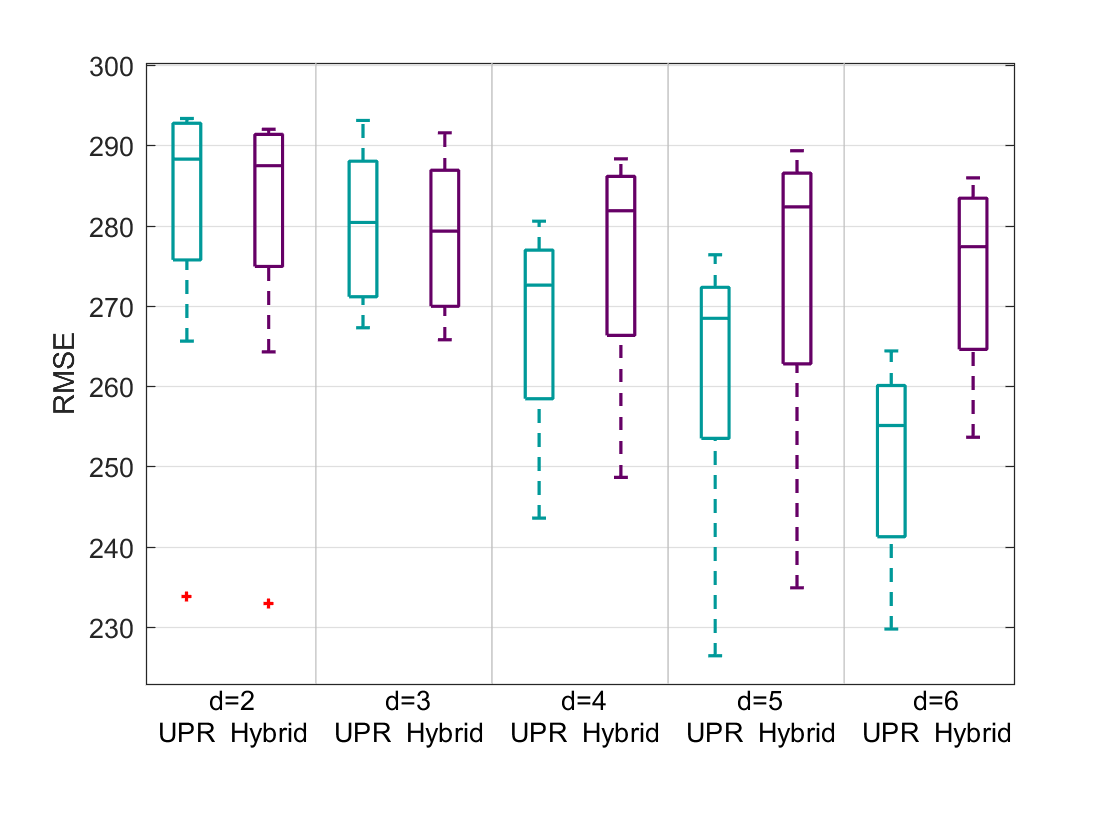}}
	\subfigure[RMSE on testing data]{\includegraphics[width = 0.49\textwidth]{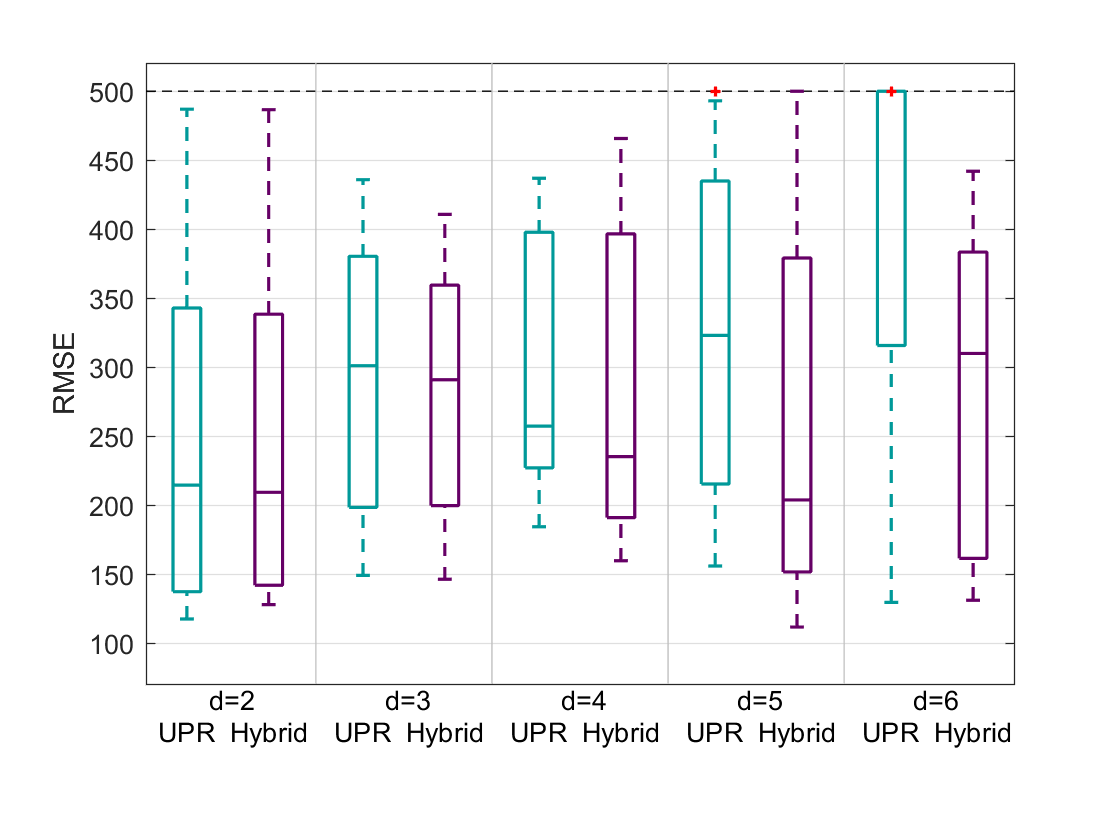}} 
	\caption{Comparative performance of UPR and Hybrid on testing and training sets.}
	\label{fig:rmse_wage}
\end{figure}

\section{Optimal design}\label{sec:opt.design} \index{Optimal design}

We once again consider a regression setting, but this time we are interested in the problem of generating data. Recall that the input to a regression problem are pairs $\{(x_i,y_i)\}_{i=1,\ldots,m}$, where $x_i \in \mathbb{R}^n$ and $y_i \in \mathbb{R}$. Statisticians make a difference between the case where the person conducting the study can choose the feature vectors $\{x_i\}_i$, and the case where the feature vectors $\{x_i\}_i$ are imposed. The latter case is called an \emph{observational study}. An illustrative example is that of studying the impact of the amount of cigarettes smoked on the development of lung cancer: our data will contain the amount of cigarettes that each participant chooses to smoke, without our being able to impact this. Indeed, it would be a major ethical breach if we were asking participants to smoke more, e.g., to change our input data. 

Of interest in this section is the other case, namely the case where the $x_i$ can be fixed to certain values by the experimenter. This is called an \emph{experimental study}. As an example of such a study, consider the problem of measuring the degree of corrosion of steal under the effects of humidity and temperature. By placing a piece of steal in an environment controlled for humidity and temperature, one is able to obtain the degree of corrosion for any values of humidity and temperature that one wishes to have. This set-up is particularly interesting to statisticians as it enables the experimenter to choose advantageous values of the features. The process of choosing such values is termed \emph{experimental design}. In this section, we will be interested in using sum of squares techniques to understand how to design experiments in an optimal way. We follow the presentation given in \cite{de2017approximate}.

We use similar terminology to what is used in Section \ref{sec:moment.pb}: we let $\mathbb{N}_{d}^n=\{\alpha \in \mathbb{N}^n~|~ \alpha_1+\ldots+\alpha_n\leq d\}$ and, for $z=(z_1,\ldots,z_n)^T$, we use the shorthand $z^{\alpha}$ to mean $z_1^{\alpha_1}\ldots z_n^{\alpha_n}$. We consider a parametric regression setting here, in fact, a polynomial one. More specifically,
$$y_i=\sum_{\alpha \in \mathbb{N}^n_{d}} \theta_{\alpha} x_i^{\alpha}+\epsilon_i,~i=1,\ldots,m$$
where $\{\theta_{\alpha}\}$ are the coefficients of the polynomial, and $\epsilon_i$ is random noise with $E[\epsilon_i]=0$, $var(\epsilon_i)=\sigma^2<\infty$, and $\epsilon_i$ independent from $\epsilon_j$. (Recall that $x_i \in \mathbb{R}^n$.) We assume that $m \geq \binom{n+d}{n}$ and that the vectors $\{x_i\}$ can be picked within a compact set $\mathcal{X} \subset \mathbb{R}^n$, described by a finite number of polynomial inequalities. Note that it may be the case that the optimal way of picking the vectors $\{x_i\}$ involves repeating the same value twice, e.g., $x_1=x_2$. As a consequence, our goal is to choose $l$ ($\l \leq m$) distinct values, $t_1,\ldots,t_l \in \mathcal{X}$ that the $\{x_i\}$ take, together with the number of times $n_k$ that value $t_k$ is taken. We let $w_k=\frac{n_k}{m}$, this information can be summarized in a $n+1\times l$ \emph{design matrix}
\begin{align}\label{eq:design}
\xi=\begin{bmatrix}
t_1 & \ldots & t_l\\
w_1 & \ldots & w_l
\end{bmatrix}, \text{ where } w_k=\frac{n_k}{m},
\end{align}
which is what we would like to obtain at the end of the optimization process. In the rest of this section, for convenience, we denote the vector of standard monomials of degree up to $d$ in $n$ variables by $z_d(x)=(1,x_1,x_2,\ldots,x_n,\ldots,x_n^d)$, and by $\theta$ the corresponding vector of coefficients so that $\sum_{\alpha \in \mathbb{N}^n_{2d}} \theta_{\alpha}x_i^\alpha=\theta^Tz_d(x)$. For ease of exposition, we also assume, until otherwise mentioned, that $l=m$ (i.e., the feature vectors $x_i$ are distinct), and that $\sum_{k=1}^m z(t_k)z(t_k)^T \succ 0$.

What should be the objective when picking $\xi$? This depends on what we would like to achieve. In our case, assuming our estimator for $\theta$ is the least squares estimator 
\begin{align}\label{eq:lse}
\hat{\theta}=\arg \min_{\theta} \sum_{k=1}^m (y_k-\theta^Tz(t_k))^2,
\end{align} 
it may be of interest to minimize, in some sense, the variance of $\hat{\theta}$. Indeed, as will be made evident soon, under the assumptions we have on $\epsilon_i$, $\hat{\theta}$ is an \emph{unbiased} estimator of $\theta$, which means that $E[\hat{\theta}]=\theta$; i.e., on average both quantities are equal. It may then be of interest to ask that $\hat{\theta}$ deviate as little as possible from $\theta$ overall: this is exactly equivalent to minimizing the variance of $\hat{\theta}$. Of course, the variance of $\hat{\theta}$ is a matrix here as $\hat{\theta}$ is a vector, so when we claim to minimize the variance of $\hat{\theta}$, we actually mean minimizing the 2-norm of its covariance matrix, or some other measure. 

Consequently, the next step in the process is to obtain an explicit expression of $\hat{\theta}$ and then compute its covariance matrix. As a byproduct, we will obtain unbiasedness of $\hat{\theta}$ as an estimator of $\theta.$ 
Note that under our assumptions, the objective function in (\ref{eq:lse}) is a strictly convex quadratic function and hence has a unique minimum. Using the first-order necessary condition for optimality, we obtain $$\hat{\theta}=\sum_{k=1}^m y_k\left(\sum_{k=1}^m z(t_k)z(t_k)^T\right)^{-1}z(t_k).$$
Replacing $y_k=\theta^Tz(x_k)+\epsilon_k$ in the previous expression, we get
$$\hat{\theta}=\sum_{k=1}^m \left(\sum_{k=1}^m z(t_k)z(t_k)^T\right)^{-1}(\theta^T z(t_k)+\epsilon_k)z(t_k)=\theta+\left(\sum_{k=1}^m z(t_k)z(t_k)^T\right)^{-1} \sum_{k=1}^m \epsilon_k z(t_k).$$
From this, we infer that $E[\hat{\theta}]=\theta$ as $E[\epsilon_k]=0, \forall k=1,\ldots,m$, and hence that $\hat{\theta}$ is an unbiased estimator of $\theta$ as previously claimed. Using standard covariance and variances identities as well as the properties of $\epsilon_i$, it follows that the variance matrix of $\hat{\theta}$ is given by
$$\Sigma=\sigma^2 \left(\sum_{k=1}^m z(t_k)z(t_k)^T\right)^{-1}.$$
We write $\Sigma=\Sigma(\xi)$ as one can see that the covariance matrix of $\hat{\theta}$ only depends on $\xi$ and the variance of $\epsilon_i$, $\sigma^2$, which we assume to be a fixed constant.
In the more general case where the points $t_k$ are not assumed distinct, the covariance matrix is given by
$$\Sigma(\xi)=\sigma^2 \left(\sum_{k=1}^l n_k z(t_k)z(t_k)^T\right)^{-1},$$
provided of course that $\sum_{k=1}^l n_k z(t_k)z(t_k)^T \succ 0$.
In the rest of this section, we consider the case where we would like to minimize the 2-norm of the matrix $\Sigma(\xi)$, which is the same as minimizing its largest eigenvalue. To avoid having to deal with the inverse that appears in its expression however, we will instead maximize the smallest eigenvalue of its inverse, or equivalently, if we define the following quantity,
$$F(\xi)\mathrel{\mathop{:}}=\sum_{k=1}^{l}w_k z(t_k)z(t_k)^T,$$ maximize the minimum eigenvalue of $F(\xi)$. (Note that maximizing the minimum eigenvalue of $F(\xi)$ or of $\Sigma(\xi)^{-1}=\frac{m}{\sigma^2} F(\xi)$ will give the same result.) The matrix $F(\xi)$ is a well-known quantity in statistics called the \emph{Fisher information matrix} of the design matrix $\xi$ and maximizing its minimum eigenvalue corresponds to a common notion of optimality\footnote{There are many different ways of defining optimality, based essentially on minimizing various norms of $\Sigma(\xi)$; we refer the interested reader to \cite{de2017approximate} for more information on this topic.} in experimental design, that of \emph{E-optimality}. Hence, the problem of interest becomes
\begin{align*}
&\max_{\xi} \lambda_{\min} F(\xi)\\
&\text{s.t. } \xi \text{ is as in (\ref{eq:design})}.
\end{align*}
This can be rewritten as 
\begin{align*}
\max_{n_k,t_k,\gamma} &\gamma\\
\text{s.t. } &\sum_{k=1}^l \frac{n_k}{m} z_d(t_k)z_d(t_k)^T \succeq \gamma I\\
&n_k \in \mathbb{N}, \sum_{k=1}^l n_k=m,
\end{align*}
where $I$ is the identity matrix. We first drop the constraint that $n_k \in \mathbb{N}$, relaxing it to $n_k \geq 0$:
\begin{equation}\label{eq:optimal.design.relax1}
\begin{aligned}
\max_{w_k,t_k,\gamma} &\gamma\\
\text{s.t. } &\sum_{k=1}^l w_k z_d(t_k)z_d(t_k)^T \succeq \gamma I\\
&w_k \geq 0, \sum_{k=1}^l w_k=1.
\end{aligned}
\end{equation}
The matrix $\sum_{k=1}^l w_k z_d(t_k)z_d(t_k)^T$ is of size $\mathbb{N}_{d}^n \times \mathbb{N}_{d}^n$. We index it by $(\alpha,\beta)$ where $\alpha, \beta \in \mathbb{N}_{d}^n$. Note that entry $(\alpha,\beta)$ of the matrix is exactly $$\int_{\mathcal{X}} x^{\alpha}x^{\beta}d\delta,$$ where $\delta$ is the Dirac measure given by $\delta(x)=\sum_{k=1}^l w_k \textbf{1}_{x=t_i}(x)$. In other words, entry $(\alpha,\beta)$ of the matrix is the $\alpha+\beta$ moment of $\delta$. Define $$y_{\alpha}\mathrel{\mathop{:}}=\int_{\mathcal{X}} x^{\alpha}d\delta, \alpha \in \mathbb{N}^n_{2d}$$ and let $M(y)$ be the $\mathbb{N}_d^n \times \mathbb{N}_d^n$ matrix with entry $(\alpha,\beta)$ given by $y_{\alpha+\beta}$. It follows that (\ref{eq:optimal.design.relax1}) can be rewritten as:
\begin{equation}\label{eq:optimal.design.relax2}
\begin{aligned}
\max_{w_k,t_k,\gamma,y} &\gamma\\
\text{s.t. } &M(y) \succeq \gamma I\\
&y_0=1, y_{\alpha}=\int_{\mathcal{X}} x^{\alpha}d\delta \text{ for } \alpha \in \mathbb{N}_{2d}^n.
\end{aligned}
\end{equation}
Similarly to (\ref{eq:def.Mnd}), we now define the following set:
\begin{align}\label{eq:def.MdX}
\mathcal{M}_{n,2d}(\mathcal{X}) \mathrel{\mathop{:}}=\{\{y_{\alpha}\}_{\alpha \in \mathbb{N}^n_{2d}}~|~ \exists \text{ a measure } \mu \text{ on $\mathcal{X}$ such that } y_{\alpha}= \int_{\mathcal{X}} x^{\alpha}d\mu, ~\forall \alpha \in \mathbb{N}^n_{2d}\}.
\end{align}
Note that contrarily to (\ref{eq:def.Mnd}), we are considering measures over $\mathcal{X}$ and not over $\mathbb{R}^n$. Hence, the dual of this set is the set of polynomials $p$ with coefficients $\{p_{\alpha}\}_{\alpha \in \mathbb{N}_{2d}^n}$ such that $p$ is nonnegative \emph{over $\mathcal{X}$}, and not over $\mathbb{R}^n$ as previously; see \cite[Section 4.4]{Laurent_survey} for a proof. Using this definition and replacing the specific Dirac measure we were considering by a general measure $\mu$ over $\mathcal{X}$, we are able to further relax (\ref{eq:optimal.design.relax2}) to a problem that only depends on $\gamma$ and $y$:
\begin{equation}\label{eq:optimal.design.relax3}
\begin{aligned}
\max_{\gamma,y}~ &\gamma\\
\text{s.t. } &M(y) \succeq \gamma I\\
&y_0=1, y \in \mathcal{M}_{2d}(\mathcal{X}).
\end{aligned}
\end{equation}

\begin{proposition}
	Consider the optimization problem
	\begin{equation}\label{eq:dual.opt.design}
	\begin{aligned}
	\min_{\lambda \in \mathbb{R}, Q \in \mathbb{N}_{d}^n \times \mathbb{N}_{d}^n}~&\lambda\\
	\text{s.t. } &\lambda-z_d(x)^TQz_d(x) \geq 0, \forall x\in \mathcal{X}\\
	&tr(Q)=1, Q\succeq 0,
	\end{aligned}
	\end{equation}
	where $z_d(x)=(1,x_1,\ldots,x_n,\ldots,x_n^d)$ is the standard vector of monomials. Then weak duality holds between (\ref{eq:optimal.design.relax3}) and (\ref{eq:dual.opt.design}).
\end{proposition}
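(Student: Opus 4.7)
The plan is to chain two inequalities linking the primal objective $\gamma$ and the dual objective $\lambda$ through the scalar quantity $\mathrm{tr}(Q M(y))$, where $(\gamma,y)$ is any feasible solution of (\ref{eq:optimal.design.relax3}) and $(\lambda,Q)$ is any feasible solution of (\ref{eq:dual.opt.design}). First I would use the primal semidefinite constraint $M(y) \succeq \gamma I$ combined with $Q \succeq 0$ and the standard fact that $\mathrm{tr}(QA) \geq \mathrm{tr}(QB)$ whenever $A \succeq B$ and $Q \succeq 0$. This yields
\begin{equation*}
\mathrm{tr}(Q M(y)) \;\geq\; \gamma\, \mathrm{tr}(Q) \;=\; \gamma,
\end{equation*}
using the normalization $\mathrm{tr}(Q)=1$ from dual feasibility.

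Next I would rewrite $\mathrm{tr}(Q M(y))$ as an integral against a representing probability measure. Since $y \in \mathcal{M}_{n,2d}(\mathcal{X})$, there is a measure $\mu$ on $\mathcal{X}$ with $y_\alpha = \int_{\mathcal{X}} x^\alpha d\mu$ for all $\alpha \in \mathbb{N}^n_{2d}$; and the constraint $y_0=1$ forces $\mu(\mathcal{X})=1$, so $\mu$ is a probability measure. Using that $M(y)_{\alpha\beta} = y_{\alpha+\beta}$ and that the $(\alpha,\beta)$ entry of $z_d(x)z_d(x)^T$ is $x^{\alpha+\beta}$, I can swap the finite sum and the integral to obtain
\begin{equation*}
\mathrm{tr}(Q M(y)) \;=\; \sum_{\alpha,\beta} Q_{\alpha\beta}\, y_{\alpha+\beta} \;=\; \int_{\mathcal{X}} \sum_{\alpha,\beta} Q_{\alpha\beta}\, x^{\alpha+\beta}\, d\mu(x) \;=\; \int_{\mathcal{X}} z_d(x)^T Q z_d(x)\, d\mu(x).
\end{equation*}

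Finally, the dual feasibility condition $\lambda - z_d(x)^T Q z_d(x) \geq 0$ for all $x \in \mathcal{X}$ integrated against the probability measure $\mu$ gives
\begin{equation*}
\lambda \;=\; \int_{\mathcal{X}} \lambda\, d\mu(x) \;\geq\; \int_{\mathcal{X}} z_d(x)^T Q z_d(x)\, d\mu(x) \;=\; \mathrm{tr}(Q M(y)) \;\geq\; \gamma,
\end{equation*}
which is precisely weak duality between the two programs. There is no real obstacle here: the argument is a routine combination of (i) the PSD trace inequality, (ii) the moment representation afforded by $y \in \mathcal{M}_{n,2d}(\mathcal{X})$, and (iii) integration of the pointwise dual inequality. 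The only detail worth flagging is that $y_0 = 1$ must be used to ensure the representing measure is a \emph{probability} measure, so that integrating the constant $\lambda$ returns $\lambda$ itself.
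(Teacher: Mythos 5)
Your proof is correct and follows essentially the same route as the paper's: both chain $\lambda \geq \mathrm{tr}(QM(y)) \geq \gamma$ by combining the PSD trace inequality with the moment representation of $y$ and integration of the pointwise dual constraint against the representing (probability) measure. The only cosmetic difference is that the paper derives the first inequality by factoring $Q = VV^T$ explicitly, whereas you invoke the standard trace inequality directly; your remark that $y_0=1$ makes $\mu$ a probability measure is exactly the normalization the paper uses as well.
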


\begin{proof}
	Let $\lambda,Q$ be feasible for (\ref{eq:dual.opt.design}) and let $\gamma,y$ be feasible for (\ref{eq:optimal.design.relax3}). As $Q\succeq 0$, there exists a matrix $V$ such that $Q=VV^T$. Furthermore, as $tr(Q)=1$, then $tr(VV^T)=tr(V^TV)=1$. Together with the fact that $M(y)\succeq \gamma I$, this implies that $V^TM(y)V\succeq \gamma V^TV$, and in particular, $$tr(V^TM(y)V) \geq tr(\gamma V^TV)=\gamma tr(V^TV)=\gamma.$$
	We have
	\begin{align*}
	\lambda-\gamma \geq \lambda-tr(V^TM(y)V)=\lambda-tr(M(y)Q)=\lambda -\sum_{\alpha,\beta} M_{\alpha,\beta}(y)Q_{\alpha,\beta}
	\end{align*}
	Recall that $M_{\alpha,\beta}(y)=y_{\alpha+\beta}$. As $y$ has a representing measure, it follows that $M_{\alpha,\beta}(y)=\int_{\mathcal{X}} x^{\alpha+\beta} d\mu$. Hence, 
	$$\lambda-\gamma \geq \lambda -\int_{\mathcal{X}} \sum_{\alpha,\beta}Q_{\alpha,\beta}x^{\alpha}x^{\beta} d\mu=\int_{\mathcal{X}} \lambda -z_d^T(x)Qz_d(x) d\mu,$$
	where we have used the fact that $y_0=1$ in the equality. As $\lambda-z_d^T(x)Qz_d(x) \geq 0$, for all $x \in \mathcal{X}$, we deduce that $\lambda-\gamma \geq 0$. 
\end{proof}
Strong duality holds under certain mild conditions, see \cite[Section 4]{Laurent_survey}. As is, (\ref{eq:dual.opt.design}) is not a tractable problem to solve. However, if one replaces the condition that $\lambda-z_d(x)^TQz_d(x)$ be nonnegative over $\mathcal{X}$ by certificates of nonnegativity of the polynomial over $\mathcal{X}$ involving sum of squares polynomials, then the problem becomes a semidefinite program. One can proceed similarly in the primal (\ref{eq:optimal.design.relax3}) by relying on outer-approximations to the set $\mathcal{M}_{2d}(\mathcal{X})$. From the optimal solution $y^*$ to the relaxation of the primal, one can recover---under some assumptions---a measure $\mu$ over $\mathcal{X}$ whose moments correspond to $y^*$. As it turns out, this measure is \emph{atomic} which means that it can be written as a measure over a finite set of points (or \emph{atoms}), each point being associated to a certain weight. We then pick our points $t_k$ to be these atoms, and the corresponding weights to be the atom weights; see \cite{de2017approximate}.

\part{Other applications}

In this last part, we briefly touch upon some other important applications of sum of squares polynomials in optimization and game theory. We wish to stress that, though we have covered a large range of applications in this paper, we have by no means covered all of them. Other important applications of sum of squares techniques which are not included here are for example automated theorem proving \cite{parrilo2004inequality,harrison2007verifying}, extremal combinatorics \cite{raymond2018symmetry}, filer design \cite{genin2000convex,roh2007multidimensional}, and quantum information theory\cite{doherty2004complete,childs2007quantum}. We refer the reader to the aforementioned references if these topics are of interest.

\section{Optimization}
As has already been seen in this volume, as well as briefly in Section \ref{subsec:pop}, sum of squares polynomials are widely used to tackle polynomial optimization problems (POPs), i.e., optimization problems where the objective is a polynomial and the constraints are given by polynomial equalities and inequalities. Though a major application of sum of squares techniques, we won't dwell on POPs as the topic has already been covered. We present however two other areas of optimization, namely copositive programming and robust semidefinite programming, where sum of squares techniques come into play. Multistage optimization is also an area of application of these techniques though not covered here; see \cite{bertsimas2011hierarchy} for more information.

\subsection{Copositive programming} \index{Copositive programming}

\begin{definition}
	An $n \times n$ matrix $M$ is said to be \emph{copositive} if $x^TMx \geq 0$, for all $x\in \mathbb{R}^n_+$. 
\end{definition}
Without loss of generality, it can be assumed that $M$ is symmetric. We denote by $C_n$ the set of $n \times n$ copositive matrices. This is a proper cone in the space of symmetric $n \times n$ matrices. At first glance, this definition may seem similar to the definition of positive semidefiniteness. A major difference between the two is that checking membership of a matrix to the set of positive semidefinite matrices can be done in polynomial time; checking membership of a matrix to $C_n$ however is NP-hard \cite{nonnegativity_NP_hard}.

\begin{proposition}
	Let $$C_n^*\mathrel{\mathop{:}}= \{M \in \mathbb{R}^{n \times n}~|~M=\sum_{i=1}^k y_iy_i^T, y_i \in \mathbb{R}^n, y_i \geq 0, i=1,\ldots,k\}.$$
	This set is the dual cone of $C_n$ for the standard inner product $\langle X,Y \rangle=tr(X^TY)$.
\end{proposition}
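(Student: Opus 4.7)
The plan is to prove the set equality $C_n^* = (C_n)^{\text{dual}}$ by showing both inclusions, where the easy direction is a direct computation and the reverse direction is a separating-hyperplane argument, preceded by a closedness lemma for $C_n^*$.

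First I would establish that $C_n^*$, as defined, is a closed convex cone. Convexity and the cone property are immediate from the definition. For closedness, I would note that by a Carathéodory-type argument in the $\binom{n+1}{2}$-dimensional space of symmetric matrices, every element of $C_n^*$ can in fact be written with $k \le \binom{n+1}{2}$ terms. Hence $C_n^*$ is the image of the compact set $\{(y_1,\dots,y_k) : y_i \in \mathbb{R}^n_+,\ \sum_i \|y_i\|^2 = r\}$ scaled over $r \ge 0$ under a continuous map, which suffices to conclude closedness (alternatively, $C_n^*$ is the conic hull of the compact set $\{yy^T : y \in \mathbb{R}^n_+,\ \|y\|=1\}$, and the conic hull of a compact set not containing $0$ is closed).

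Next, the easy inclusion $C_n^* \subseteq (C_n)^{\text{dual}}$. Take $M = \sum_{i=1}^k y_i y_i^T \in C_n^*$ with each $y_i \ge 0$, and any $X \in C_n$. Then
\[
\langle X, M \rangle = \operatorname{tr}\!\left(X \sum_{i=1}^k y_i y_i^T\right) = \sum_{i=1}^k y_i^T X y_i \ge 0,
\]
where each term is nonnegative because $X$ is copositive and $y_i \in \mathbb{R}^n_+$. So $M \in (C_n)^{\text{dual}}$.

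For the reverse inclusion $(C_n)^{\text{dual}} \subseteq C_n^*$, I would argue by contrapositive using separation. Suppose $M \notin C_n^*$. Since $C_n^*$ is a closed convex cone in the space of symmetric $n\times n$ matrices (which we may assume by symmetrizing $M$, without loss of generality), the separating hyperplane theorem yields a symmetric matrix $X$ with $\langle X, M \rangle < 0$ and $\langle X, N \rangle \ge 0$ for every $N \in C_n^*$. Specializing to $N = yy^T$ for any $y \in \mathbb{R}^n_+$ gives $y^T X y = \langle X, yy^T \rangle \ge 0$, so $X \in C_n$. This $X$ then witnesses $M \notin (C_n)^{\text{dual}}$, completing the contrapositive.

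The main obstacle will be the closedness of $C_n^*$, since it is what legitimizes the separating hyperplane step; the Carathéodory-type bound on $k$ or the compact-base-of-a-cone argument is the key technical ingredient. Everything else is routine from the definition of the dual cone.
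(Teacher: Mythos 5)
The paper states this proposition without proof (it is presented as a known fact, with the surrounding discussion deferring to the copositive programming literature), so there is no in-paper argument to compare against. Your proof is correct and is the standard one. The easy inclusion is exactly the computation $\langle X, M\rangle = \sum_i y_i^T X y_i \ge 0$; the reverse inclusion correctly reduces, via the bipolar theorem for closed convex cones, to showing that $C_n^*$ is closed and that any linear functional nonnegative on $C_n^*$ is given by a copositive matrix (the latter following by testing on rank-one generators $yy^T$ with $y \ge 0$). You rightly flag closedness as the one nontrivial ingredient, and both of your justifications work: the Carath\'eodory bound on the number of terms $k$, or the fact that $C_n^*$ is the conic hull of the compact set $\{yy^T : y \in \mathbb{R}^n_+,\ \|y\|=1\}$, which does not contain the origin. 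One small point worth making explicit is the ambient space: the paper notes that $C_n$ is a proper cone in the space of \emph{symmetric} matrices, so the duality should be taken there; your parenthetical remark about symmetrizing handles this, since every element of $C_n^*$ is automatically symmetric and the trace inner product of a symmetric matrix with the skew part of $M$ vanishes. No gaps.
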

We refer to elements of $C_n^*$ as completely positive matrices and to the set $C_n^*$ itself as the completely positive cone. A \emph{copositive program} is none other than the following conic program:
\begin{equation}\label{eq:cp}
\begin{aligned}
&\min_X &&tr(C^TX)\\
&\text{s.t. } &&tr(A_i^TX)=b_i, i=1,\ldots,m\\
& &&X \in C_n,
\end{aligned}
\end{equation}
where $C, A_1,\ldots,A_m$ are $n \times n$ symmetric matrices and $b_1,\ldots,b_m$ are scalars. Its dual problem is given by
\begin{equation}\label{eq:cpp}
\begin{aligned}
&\max_{y \in \mathbb{R}^m} &&\sum b_iy_i\\
&\text{s.t. } &&C-\sum_{i=1}^m y_iA_i \in C_n^*.
\end{aligned}
\end{equation}
A variety of problems can be reformulated as copositive programs or as completely positive programs; see \cite{dur2010copositive} and the references therein for more information on copositive programming. We will see an example relating to the stability number of a graph in Section \ref{subsec:stable.set}.

\subsubsection{Copositive matrices and sum of squares polynomials} As it is NP-hard to check membership to $C_n$, it is of interest to develop sufficient conditions for membership to $C_n$ that are checkable in polynomial time. We give an example of such a condition next.
\begin{proposition}\label{prop:P+N}
Let $M$ be an $n \times n$ symmetric matrix. If $M=P+N$ for some matrices $P \succeq 0$ and $N \geq 0$, then $M$ is copositive.
\end{proposition}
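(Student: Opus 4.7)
The plan is to decompose the quadratic form $x^T M x$ according to the given decomposition $M = P + N$ and argue that each piece is nonnegative on the nonnegative orthant, though for different reasons. Concretely, fix an arbitrary $x \in \mathbb{R}^n_+$ and write
\[
x^T M x = x^T (P+N) x = x^T P x + x^T N x.
\]

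For the first term, I would invoke the definition of positive semidefiniteness: $P \succeq 0$ means $y^T P y \geq 0$ for every $y \in \mathbb{R}^n$, so in particular this holds for our nonnegative $x$. The important point is that this bound on $x^T P x$ is available without any sign assumption on $x$, so nonnegativity of $x$ is not used here at all.

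For the second term, I would expand in coordinates:
\[
x^T N x = \sum_{i=1}^n \sum_{j=1}^n N_{ij}\, x_i\, x_j.
\]
Each summand is a product of three nonnegative numbers: $N_{ij} \geq 0$ by the entrywise nonnegativity assumption on $N$, and $x_i, x_j \geq 0$ because $x \in \mathbb{R}^n_+$. Hence $x^T N x \geq 0$, and adding the two bounds yields $x^T M x \geq 0$. Since $x \in \mathbb{R}^n_+$ was arbitrary, $M$ is copositive by definition.

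There is essentially no obstacle here: the proposition just unpacks the defining properties of $\succeq 0$ and of entrywise nonnegativity, and relies on the fact that the cone of copositive matrices is closed under addition (which is immediate from the definition). The only mildly subtle point worth flagging in the write-up is the distinction between $P \succeq 0$ (a spectral/quadratic-form condition valid on all of $\mathbb{R}^n$) and $N \geq 0$ (an entrywise condition that becomes useful in quadratic forms only when paired with nonnegative vectors); the proof of copositivity of $M$ really needs both flavors of nonnegativity simultaneously.
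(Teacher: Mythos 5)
Your proof is correct and is exactly the standard argument; the paper itself states this proposition without proof (treating it as immediate), and your decomposition of $x^TMx$ into the spectral part $x^TPx\ge 0$ and the entrywise part $\sum_{i,j}N_{ij}x_ix_j\ge 0$ for $x\in\mathbb{R}^n_+$ is the argument the paper implicitly relies on.
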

Clearly checking whether $M$ satisfies this condition is a semidefinite (feasibility) program. How strong is this sufficient condition? It turns out that for $n< 5$, the set of matrices $\{M~|~M=P+N, P\succeq 0, N\geq 0\}$ exactly coincides with $C_n$. For $n\geq 5$, this is not true anymore, as evidenced by the so-called Horn matrix, a $5 \times 5$ zero-one matrix which is copositive but cannot be written as $P+N$, $P\succeq 0$, $N\geq 0$; see, e.g., \cite{dur2010copositive}. Better than one sufficient condition, however, would be a hierarchy of sufficient conditions, with each level giving rise to an improved inner approximation of $C_n$. This can be obtained by relating the notion of copositivity back to polynomial nonnegativity and then using sum of squares-based approximations. How can we do this in practice? First, it is straightforward to see that $M$ is copositive if and only if the associated quartic form 
\begin{align}\label{eq:def.m}
m(x)\mathrel{\mathop{:}}=\sum_{i,j}m_{ij}x_i^2x_j^2
\end{align}
 is globally nonnegative. This leads to a natural sufficient condition for copositivity: requiring that $m(x)$ in \ref{eq:def.m} be a sum of squares rather than nonnegative. Interestingly enough, this first sufficient condition is equivalent to that given in Proposition \ref{prop:P+N}.
\begin{proposition}\cite[Section 5]{PhD:Parrilo}
	A matrix $M$ can be written as $P+N$, with $P \succeq 0$ and $N\geq 0$ if and only if the associated quartic form $m(x)$ is a sum of squares.	
\end{proposition}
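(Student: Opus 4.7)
The plan is to prove the two implications separately. The forward direction reduces to two explicit sum-of-squares decompositions, while the reverse direction will exploit the sign-change symmetry of $m(x)$ to block-diagonalize a Gram matrix.

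For the forward direction, assume $M = P + N$ with $P\succeq 0$ and $N\geq 0$, and split $m(x) = \sum_{i,j} P_{ij} x_i^2 x_j^2 + \sum_{i,j} N_{ij} x_i^2 x_j^2$. Writing $P = \sum_k v_k v_k^T$ and substituting $y_i = x_i^2$, the first piece becomes $\sum_k (v_k^T y)^2 = \sum_k (\sum_i v_{k,i} x_i^2)^2$, visibly a sum of squares of polynomials in $x$. The second piece is $\sum_{i,j} N_{ij} (x_i x_j)^2$, which is sos because each $N_{ij}\geq 0$. Adding the two sos decompositions yields an sos decomposition of $m(x)$.

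For the converse, I would start from an sos Gram-matrix expression $m(x) = z(x)^T Q z(x)$ with $Q \succeq 0$, where $z(x)$ is the standard basis of degree-$2$ monomials split into $z_1 = (x_1^2,\ldots,x_n^2)^T$ and $z_2 = (x_i x_j)_{i<j}^T$. Since $m(x)$ is invariant under the group $G=\{\pm 1\}^n$ acting by sign changes on the variables, I would Reynolds-average $Q$ into $\tilde Q = |G|^{-1}\sum_{g\in G} \pi(g)^T Q \pi(g)$, where $\pi(g)$ is the diagonal matrix implementing the $G$-action on $z$. Averaging preserves $\tilde Q \succeq 0$ and, by $G$-invariance of $m$, does not change the represented polynomial. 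The key structural observation is that, under $G$, all of $x_1^2,\ldots,x_n^2$ lie in the trivial character, while each $x_i x_j$ with $i<j$ sits in its own distinct character $\epsilon\mapsto\epsilon_i\epsilon_j$; by Schur-type orthogonality, $\tilde Q$ must therefore be block-diagonal with an $n\times n$ psd block $P$ acting on $z_1$ and individual nonnegative scalars $c_{ij}$ acting on each $x_i x_j$.

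To close the argument I would match coefficients of $x_i^4$ and $x_i^2 x_j^2$ in the resulting identity $m(x) = z_1^T P z_1 + \sum_{i<j} c_{ij}\, x_i^2 x_j^2$, obtaining $M_{ii} = P_{ii}$ and $M_{ij} = P_{ij} + \tfrac{1}{2} c_{ij}$ for $i<j$. Defining $N$ to be the symmetric matrix with $N_{ii}=0$ and $N_{ij}=N_{ji}=\tfrac{1}{2} c_{ij}\geq 0$ then gives $M = P + N$ with $P\succeq 0$ and $N\geq 0$, as required. I expect the main obstacle to be justifying the symmetry-adapted block-diagonalization of $Q$: Reynolds averaging and isotypic decomposition are standard in invariant sos, but one must verify carefully that the off-diagonal monomials $x_i x_j$ carry pairwise distinct $G$-characters (so each occupies its own $1\times 1$ isotypic block) and that cross-isotypic blocks of $\tilde Q$ must vanish. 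Without this symmetry reduction, one would instead need to argue directly that the requirement that $z^T Q z$ contain no cross-monomials such as $x_i^3 x_j$ or $x_i^2 x_j x_k$ forces the relevant entries of $Q$ to cancel, which is substantially more tedious.
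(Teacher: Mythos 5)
Your proof is correct. Note that the paper itself gives no proof of this proposition---it defers entirely to Parrilo's thesis---so there is no in-paper argument to compare against; your write-up is a complete, self-contained version of the standard argument. The forward direction (splitting $m$ into $y^TPy$ with $y_i=x_i^2$ plus $\sum_{i,j}N_{ij}(x_ix_j)^2$) is fine, and the converse via Reynolds averaging over $\{\pm 1\}^n$ is sound: the monomials $x_1^2,\ldots,x_n^2$ all carry the trivial character while each $x_ix_j$ with $i<j$ carries the character $\epsilon\mapsto\epsilon_i\epsilon_j$, and these are pairwise distinct and nontrivial, so character orthogonality for this abelian group forces the averaged Gram matrix to be block-diagonal with one psd $n\times n$ block on the $x_i^2$'s and nonnegative $1\times 1$ blocks on the $x_ix_j$'s. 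Your coefficient matching, including the factor $\tfrac{1}{2}$ on the off-diagonal entries coming from the symmetrization $2m_{ij}=2P_{ij}+c_{ij}$, is also correct. You are right that this symmetry reduction is the clean way to go: the direct approach only shows that certain \emph{sums} of Gram entries vanish (e.g.\ the coefficient of $x_i^2x_jx_k$ couples $Q_{x_i^2,x_jx_k}$ with $Q_{x_ix_j,x_ix_k}$), which is exactly the obstruction the averaging dissolves.
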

For the proof, we refer the reader to \cite{PhD:Parrilo}.
Furthermore, this particular rewriting of our initial sufficient condition clues us in on how to construct an improving hierarchy of semidefinite-based inner approximations to the set of copositive matrices; see \cite{PhD:Parrilo}. Indeed, let
$$K_r\mathrel{\mathop{:}}=\{M \in \mathbb{R}^{n\times n}~|~ \left(\sum_{i=1}^n x_i^2\right)^r \cdot \left(\sum_{i,j}^n m_{ij}x_i^2x_j^2 \right) \text{ is sos}\}.$$
It is easy to see that $K_0$ is the set $\{M \in \mathbb{R}^{n \times n}~|~M=P+N, P\succeq 0, N \geq 0\}$, that $K_r \subseteq K_{r+1}, \forall r\geq 0$, and that $K_r \subseteq C_n$, $\forall r\geq 0$. Testing membership to $K_r$ is a semidefinite feasibility program, whose size grows with $r$. An important property of these inner approximations is that they get arbitrarily close to $C_n$, i.e., $int(C_n) \subseteq \cup_{r \in \mathbb{N}} K_r$. The latter fact is a consequence of a theorem by Poly\'a \cite{Polya} which states that for any positive definite even\footnote{As a reminder, we say that a form $f$ is even if each of the variables featuring in its individual monomials has an even power.} form $f$, there exists $r \in \mathbb{N}$ such that $f(x) \cdot (\sum_{i=1}^n x_i^2)^r$ has nonnegative coefficients. Indeed, if $M \in int(C_n)$, i.e., $x^TMx >0, \forall x \neq 0$, then $m(x)$ in (\ref{eq:def.m}) is a positive definite even form. From the aforementioned result, there exists $r \in \mathbb{N}$ such that $m(x) \cdot (\sum_{i=1}^n x_i^2)^r$ has nonnegative coefficients. Combining this with the fact that $m(x) \cdot (\sum_{i=1}^n x_i^2)^r$ is even, it follows that there exists an integer $r$ such that $m(x) \cdot (\sum_{i=1}^n x_i^2)^r$ is a sum of squares and so $M \in K_r$.
\begin{remark}
	The inner approximations to $C_n$ that we have defined here can then be used to obtain upper bounds on (\ref{eq:cp}). Likewise, they can also be used to construct outer approximations to $C_n^*$ and so to obtain upper bounds on (\ref{eq:cpp}).
\end{remark}

\subsubsection{Using copositive programming to approximate the stability number of the graph} \label{subsec:stable.set} \index{Stability number}
Let $G=(V,E)$ be a graph on $n$ nodes. A stable set of the graph $G$ is a subset of its nodes, no two of which have an edge between them. The stability number $\alpha(G)$ of $G$ is the size of the largest stable set in $G$. The ability to compute $\alpha(G)$ has applications in scheduling and coding theory among other areas. The issue however is that $\alpha(G)$ is hard to compute, though reasonable upper bounds on it can often be obtained using linear programming or semidefinite programming. One particularly well-known semidefinite programming approximation is due to Lov\'asz \cite{lovasz1979shannon}; the upper bound obtained via this method is denoted by $\vartheta(G)$ and called the \emph{theta number}. It can be shown that $\vartheta(G)$ is always an improvement on the bounds obtained via the standard linear programming relaxation. Consequently, the paper \cite{lovasz1979shannon} proved to be quite influential in showing how useful semidefinite programming could be. In \cite{deKlerk_StableSet_copositive}, the authors propose a copositive-programming formulation of the stability number. 

\begin{proposition}\cite[Corollary 2.4]{deKlerk_StableSet_copositive}
	For any graph $G=(V,E)$ with adjacency matrix $A$, one has 
	\begin{equation*}
	\begin{aligned}
	\alpha(G)=&\min_{\lambda} \lambda\\
	&\text{s.t. } \lambda(I+A)-ee^T \in C_n, 
	\end{aligned}
	\end{equation*}
where $e$ is the $n \times 1$ vector of ones.
\end{proposition}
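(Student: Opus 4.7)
The plan is to derive this characterization from the classical theorem of Motzkin and Straus, which states that for a graph $G$ with adjacency matrix $A$,
$$\frac{1}{\alpha(G)} = \min_{x \in \Delta} x^T(I+A)x,$$
where $\Delta = \{x \in \mathbb{R}^n_+ \mid e^Tx = 1\}$ is the standard simplex. I would take Motzkin--Straus as a black box and show that the copositive program above is equivalent to it.

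The first step is to unpack the copositivity constraint using the definition of $C_n$. By definition, $\lambda(I+A) - ee^T \in C_n$ is the statement that
$$\lambda \, x^T(I+A)x \;\geq\; (e^Tx)^2 \qquad \text{for every } x \in \mathbb{R}^n_+.$$
I would then exploit the homogeneity of this inequality. For any $x \in \mathbb{R}^n_+$ with $e^Tx > 0$, set $y = x/(e^Tx) \in \Delta$; dividing by $(e^Tx)^2$ reduces the condition to $\lambda \, y^T(I+A)y \geq 1$ for all $y \in \Delta$. The remaining case $x \geq 0$ with $e^Tx = 0$ forces $x = 0$, for which the inequality is trivial. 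Hence $\lambda(I+A) - ee^T \in C_n$ is equivalent to
$$\lambda \;\geq\; \frac{1}{y^T(I+A)y} \quad \text{for all } y \in \Delta,$$
at least as long as $y^T(I+A)y > 0$ on $\Delta$ (which holds since entries of $I+A$ are nonnegative and the diagonal is positive).

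The second step is to minimize $\lambda$ subject to this reformulated constraint. Taking the supremum of the right-hand side over $y \in \Delta$ gives
$$\min \{\lambda : \lambda(I+A) - ee^T \in C_n\} \;=\; \sup_{y \in \Delta} \frac{1}{y^T(I+A)y} \;=\; \frac{1}{\min_{y \in \Delta} y^T(I+A)y}.$$
Applying the Motzkin--Straus identity, the right-hand side equals $\alpha(G)$. Finally, I would observe that the feasible set $\{\lambda : \lambda(I+A) - ee^T \in C_n\}$ is an interval of the form $[\alpha(G), +\infty)$ (nonnegative combinations of copositive matrices are copositive, and $ee^T \in C_n$ as well, so feasibility is upward-closed in $\lambda$), so the minimum is attained and equals $\alpha(G)$.

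The main obstacle is not the derivation once Motzkin--Straus is in hand, but rather justifying the use of Motzkin--Straus itself; a self-contained proof would need that identity as a separate lemma. A secondary technical point is the handling of the boundary case where $x^T(I+A)x$ could be zero on the nonnegative orthant, but since $I+A \succeq I$ on the diagonal and has nonnegative entries, $x^T(I+A)x$ vanishes on $\mathbb{R}^n_+$ only at $x=0$, so no pathology arises.
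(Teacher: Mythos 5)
Your proof is correct. The paper gives no proof of this proposition---it simply cites \cite[Corollary 2.4]{deKlerk_StableSet_copositive}---and your derivation (unpack copositivity on $\mathbb{R}^n_+$, homogenize down to the simplex, invert, and invoke Motzkin--Straus, with the observation that $x^T(I+A)x \geq \|x\|^2$ on the nonnegative orthant ruling out degeneracies) is precisely the argument used in that cited source, so there is nothing further to add.
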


Though this reformulation does not make the problem any more tractable, it does open the door to using the aforementioned approximations of the copositive cone, optimization over which is tractable. This gives rise to a hierarchy of optimization problems indexed by $r$:
\begin{equation*}
\begin{aligned}
\vartheta_r'(G)\mathrel{\mathop{:}}=&\min_{\lambda} \lambda\\
&\text{s.t. } \lambda(I+A)-ee^T \in K_r.
\end{aligned}
\end{equation*}
Already at level $r=0$, the authors are able to show that $\vartheta_0'(G)$ improves on $\vartheta(G)$. Furthermore, it is shown that if $r \geq \alpha^2(G)$, then $\vartheta_r'(G)$ is equal to the true stability number. We refer the reader to \cite{deKlerk_StableSet_copositive} for more details.

\subsection{Robust semidefinite programming} \index{Robust semidefinite programming}
We consider the following optimization problem:
\begin{equation}\label{eq:robust.sdp}
\begin{aligned}
u_{opt}\mathrel{\mathop{:}}=&\min_{y \in \mathbb{R}^n} c^Ty\\
&\text{s.t. } F(x,y) \succ 0, \forall x \in \mathbb{R}^m \text{ with } G(x)\preceq 0,
\end{aligned}
\end{equation}
where $c$ is an $n \times 1$ vector, $F$ is a $p \times p$ symmetric matrix whose entries depend polynomially on $x$ and affinely on $y$, and $G(x)$ is a $q \times q$ symmetric matrix whose entries depend polynomially on $x$. We say that $F$ and $G$ are \emph{polynomial matrices}. Note that for fixed $x$, (\ref{eq:robust.sdp}) is a semidefinite program. However, unlike other semidefinite programs, we require here that $y$ satisfy $F(x,y) \succ 0$, regardless of the value that $x$ takes within the set $\{x \in \mathbb{R}^n~|~G(x)\preceq 0\}$. We can view $x$ as encoding uncertainty in our problem and we want our solution $y$ to be robust to this uncertainty. Scenarios where one would encounter semidefinite programs with uncertainty are numerous; see \cite{ben1997robust,el1998robust, scherer2006matrix} for some examples.

We will follow the approach described in \cite{scherer2006matrix}. It relies on the concept of \emph{sum of squares matrix} that we define next. 

\begin{definition}
	A $p \times p$ matrix $S(x)$ whose entries are polynomials in $x \in \mathbb{R}^m$ is said to be a \emph{sum of squares (sos) matrix} if there exists a $n \times p$ polynomial matrix $T(x)$ such that 
	$$S(x)=T(x)^TT(x).$$
\end{definition}
An equivalent definition is for the scalar-valued polynomial $y^TS(x)y$ be a sum of squares polynomial in the variables $(x,y) \in \mathbb{R}^m \times \mathbb{R}^p$. As a consequence, constraining a matrix to be a sum of squares matrix can be imposed using semidefinite programming.

\begin{remark}
	This may seem similar to the concept of sos-convexity, which we saw in Section \ref{sec:shape.constr.reg}. In fact, there is a link between both notions as a polynomial is sos-convex if and only if its Hessian is an sos matrix. 
\end{remark} 

The difficulty in solving (\ref{eq:robust.sdp}) lies in the ``for all $x$'' quantifier. A key step is then to reformulate the constraint in such a way that the quantifier does not appear anymore. This is done by making use of a bilinear mapping
$$(\cdot,\cdot)_p:\mathbb{R}^{pq \times pq}\times \mathbb{R}^{q \times q} \mapsto \mathbb{R}^{p \times p},$$
such that $$(A,B)_p=tr_p(A^T (I_p \otimes B)),$$ where $\otimes$ is the Kronecker product and 
$$tr_p(C)=\begin{bmatrix} tr(C_{11}) & \ldots & tr(C_{1p})\\ \vdots \ddots \vdots \\ tr(C_{p1}) & \ldots & tr(C_{pp}) \end{bmatrix} \text{ with } C \in \mathbb{R}^{pq \times pq}, C_{jk} \in \mathbb{R}^{q\times q}.$$ 
\begin{proposition}\label{prop:robust.sdp}
	Let $A \in \mathbb{R}^{pq \times pq}$ and $B \in \mathbb{R}^{q \times q}$. If $A \succeq 0$ and $B \succeq 0$, then $(A,B)_p \succeq 0$.
\end{proposition}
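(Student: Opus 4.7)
The plan is to prove $(A,B)_p \succeq 0$ by the standard criterion: show $v^T (A,B)_p v \geq 0$ for every $v \in \mathbb{R}^p$. To do this, I would first reinterpret $v^T tr_p(C) v$ as a full (scalar) trace over the larger $pq \times pq$ space. Setting $u \mathrel{\mathop{:}}= v \otimes I_q \in \mathbb{R}^{pq \times q}$ and unpacking the definition of $tr_p$ blockwise, one obtains the identity $v^T tr_p(C) v = tr(u^T C u)$, because the $q \times q$ blocks $C_{jk}$ of $C$ get weighted by $v_j v_k$ and their diagonals summed---which is exactly what $tr(u^T C u)$ computes.

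Applying this with $C = A^T(I_p \otimes B) = A(I_p \otimes B)$ (using that $A$ is symmetric, which is implied by PSD), I would then use cyclicity of the trace together with the Kronecker mixed-product rule to rewrite
$$v^T (A,B)_p v = tr(u^T A (I_p \otimes B) u) = tr\!\left(A (I_p \otimes B)\, uu^T\right) = tr\!\left(A\, (vv^T \otimes B)\right),$$
where the last equality uses $uu^T = (v \otimes I_q)(v^T \otimes I_q) = vv^T \otimes I_q$ combined with $(I_p \otimes B)(vv^T \otimes I_q) = vv^T \otimes B$.

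Finally, I would invoke two standard facts: (i) the Kronecker product of two PSD matrices is PSD, so $vv^T \otimes B \succeq 0$ since $vv^T \succeq 0$ and $B \succeq 0$; and (ii) the trace of the product of two PSD matrices is nonnegative (for instance, by cyclicity, $tr(AX) = tr(A^{1/2} X A^{1/2}) \geq 0$ whenever $X \succeq 0$). Combined, these give $v^T (A,B)_p v = tr(A(vv^T \otimes B)) \geq 0$, and since $v$ was arbitrary, $(A,B)_p \succeq 0$.

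The main obstacle, such as it is, lies in the first step: carefully verifying the identity $v^T tr_p(C) v = tr(u^T C u)$ for $u = v \otimes I_q$. This is a bookkeeping exercise with block indices rather than a conceptually hard step, but the rest of the argument hinges on it, so I would double-check it in coordinates before proceeding. Once that identity is in hand, the remainder is a short computation with standard Kronecker and trace identities.
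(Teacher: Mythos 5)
Your proof is correct, but it takes a genuinely different route from the paper's. The paper factors $B=DD^T$, observes that $C\mathrel{\mathop{:}}=(I_p\otimes D)^TA(I_p\otimes D)\succeq 0$, argues that the blockwise trace $tr_p(C)$ of this positive semidefinite matrix is itself positive semidefinite, and then uses cyclicity-type Kronecker manipulations to identify $tr_p(C)$ with $(A,B)_p$. You instead test the quadratic form directly: via the identity $v^Ttr_p(C)v=tr(u^TCu)$ with $u=v\otimes I_q$, you reduce everything to $v^T(A,B)_pv=tr\left(A(vv^T\otimes B)\right)$, and conclude from the two standard facts that a Kronecker product of PSD matrices is PSD and that $tr(AX)\geq 0$ for $A,X\succeq 0$. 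Your key identity checks out: $(v^T\otimes I_q)C(v\otimes I_q)=\sum_{j,k}v_jv_kC_{jk}$, whose trace is exactly $\sum_{j,k}v_jv_k\,tr(C_{jk})=v^Ttr_p(C)v$. What your approach buys is that it sidesteps the partial-trace step entirely: the paper's proof asserts along the way that the off-diagonal blocks of $(I_p\otimes D)^TA(I_p\otimes D)$ vanish, which is not true in general (the $(i,j)$ block is $D^TA_{ij}D$), and the correct justification there is the less elementary fact that the partial trace preserves positive semidefiniteness. Your argument needs neither the factorization of $B$ nor any statement about the blocks of a conjugated matrix, at the modest cost of the index bookkeeping you flag in your first step. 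Both proofs ultimately rest on the same Kronecker mixed-product and trace-cyclicity identities.
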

\begin{proof}
	As $B \succeq 0$, there exists a matrix $D \succeq 0$ such that $B=DD^T$. As $A \succeq 0$, it follows that 
	$$(I_p \otimes D)^TA(I_p \otimes D) \succeq 0.$$ 
	If we denote by $\{A_{ij}\}$ the $q \times q$ blocks that make up $A$ and by $\{C_{ij}\}$ the $q \times q$ blocks that make up $C\mathrel{\mathop{:}}=(I_p \otimes D)^TA(I_p \otimes D)$, then it follows that
	$$C_{ij}=0 \text{ if } i \neq j \text{ and } C_{ii}=D^TA_{ii}D.$$
	As $A_{ii} \succeq 0$ (this follows from the fact that $A \succeq 0$), then $tr_P(C) \succeq 0$. Manipulations of the Kronecker product then give us
	$$tr_P((I_p \otimes D)^TA(I_p \otimes D))=tr_P(A(I_p \otimes D)(I_p \otimes D))=tr_P(A(I_p \otimes DD^T))=tr_P(A(I_p \otimes B)),$$
	which enables us to conclude that $(A,B)_p \succeq 0$.
\end{proof}
Going back to our robust semidefinite programming problem in (\ref{eq:robust.sdp}), we now define a new optimization problem:
\begin{equation}\label{eq:robust.sdp2}
\begin{aligned}
v_{opt}\mathrel{\mathop{:}}=&\min_{y,\epsilon,S} &&c^Ty\\
&\text{s.t. } &&\epsilon>0,\\
& &&F(x,y)+(S(x),G(x))_p -\epsilon I_p  \text{ is an sos matrix},\\
& &&S(x) \text{ is an sos matrix}.
\end{aligned}
\end{equation}
Here $S(x)$ is a $pq \times pq$ matrix with polynomial entries. We have the following result.

\begin{proposition}\cite{scherer2006matrix}
	Let $u_{opt}$ and $v_{opt}$ be defined as in (\ref{eq:robust.sdp}) and (\ref{eq:robust.sdp2}). We have $u_{opt} \leq v_{opt}$.
\end{proposition}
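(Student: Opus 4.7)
The plan is to show that every feasible point of (\ref{eq:robust.sdp2}) is also feasible for (\ref{eq:robust.sdp}) with the same value of the objective $c^Ty$; taking the infimum over each side will then yield $u_{opt} \leq v_{opt}$. So let $(y,\epsilon,S)$ be feasible for (\ref{eq:robust.sdp2}) and fix an arbitrary $x \in \mathbb{R}^m$ with $G(x) \preceq 0$. We want to conclude $F(x,y) \succ 0$.

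First I would record the two pointwise consequences of the sos matrix conditions. Since $S(x)$ is an sos matrix, we have $S(x) \succeq 0$ for every $x \in \mathbb{R}^m$; in particular $S(x) \succeq 0$ at the point of interest. Similarly, since $F(x,y) + (S(x),G(x))_p - \epsilon I_p$ is an sos matrix in $x$, it is positive semidefinite at every $x$, so
\begin{equation*}
F(x,y) + (S(x),G(x))_p \succeq \epsilon I_p.
\end{equation*}

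The key step is to flip the sign of $G(x)$ and invoke Proposition \ref{prop:robust.sdp}. Since $G(x) \preceq 0$, we have $-G(x) \succeq 0$; combined with $S(x) \succeq 0$, Proposition \ref{prop:robust.sdp} (applied with $A = S(x)$ and $B = -G(x)$) yields $(S(x), -G(x))_p \succeq 0$. The bilinear mapping $(\cdot,\cdot)_p$ is linear in its second argument (this is immediate from the definition $(A,B)_p = tr_p(A^T(I_p \otimes B))$ and the linearity of $\otimes$ and $tr_p$), so $(S(x), -G(x))_p = -(S(x), G(x))_p$, which gives $(S(x),G(x))_p \preceq 0$.

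Plugging this back in, $F(x,y) \succeq F(x,y) + (S(x),G(x))_p \succeq \epsilon I_p \succ 0$, so $y$ is feasible for (\ref{eq:robust.sdp}). Hence $u_{opt} \leq c^Ty$, and taking the infimum over all $(y,\epsilon,S)$ feasible for (\ref{eq:robust.sdp2}) yields $u_{opt} \leq v_{opt}$. There is no real obstacle here beyond invoking the preceding proposition and the bilinearity of $(\cdot,\cdot)_p$; the only subtlety worth flagging is that one must verify that the second-slot linearity of $(\cdot,\cdot)_p$ (used to pass from $-G(x)$ back to $G(x)$) is indeed a bilinearity, not just linearity in the first slot, which is clear from the explicit formula $(A,B)_p = tr_p(A^T(I_p \otimes B))$.
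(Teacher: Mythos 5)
Your proof is correct and follows essentially the same route as the paper's: fix an arbitrary $x$ with $G(x)\preceq 0$, use the sos-matrix constraints to get $S(x)\succeq 0$ and $F(x,y)+(S(x),G(x))_p-\epsilon I_p\succeq 0$ pointwise, then apply Proposition \ref{prop:robust.sdp} to $S(x)$ and $-G(x)$ together with linearity in the second slot to conclude $F(x,y)\succeq \epsilon I_p\succ 0$. No gaps.
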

\begin{proof}
	Suppose $y, \epsilon$, and $S(x)$ are feasible for (\ref{eq:robust.sdp2}) and let $x_0$ be an arbitrary point such that $G(x_0) \preceq 0$. Using the third constraint of (\ref{eq:robust.sdp2}), we have that $S(x_0) \succeq 0$. From the second constraint in (\ref{eq:robust.sdp2}) and Proposition \ref{prop:robust.sdp}, it follows that
	$$F(x_0,y)-\epsilon I_p \succeq -(S(x_0),G(x_0))_p=(S(x_0),-G(x_0))_p \succeq 0$$
	and so $F(x_0,y) \succ 0$. This implies that $y$ is feasible for (\ref{eq:robust.sdp2}) as $x_0$ was chosen arbitrarily, and so that $u_{opt} \leq v_{opt}$.
\end{proof}

Under a modest technical condition on $G(x)$ (a generalization of the so-called Archimedean condition to matrices), it can in fact be shown that $u_{opt}=v_{opt}$; see \cite[Theorem 1]{scherer2006matrix}. Though (\ref{eq:robust.sdp2}) is not tractable as is, it can easily be made tractable by fixing the degree of the polynomials that appear in the entries of $S(x)$. Indeed, it then becomes a semidefinite program. Under the previous assumption, and as the degree of the polynomials in $S$ grows, the optimal values of the approximations converge to $u_{opt}$. Once again, we refer the reader to \cite{scherer2006matrix} for more details.

\section{Game theory} \index{Game theory}

In a game, multiple agents (the \emph{players}) have to make decisions, each guided by his or her preferences, and each influenced by the decisions of the other players. Game theory is the mathematical theory that studies what behavior emerges in these situations; see, e.g., \cite{osborne1994course}. In this section, we consider \emph{strategic form games} which are games specified by three components: the number of players, the set of actions (or pure strategies) for each player, and their payoffs. Both of the games we consider have two players, Player 1 and Player 2, each having a set of actions $S_1$ and $S_2$, and a payoff function $P_i: S_1 \times S_2 \rightarrow \mathbb{R}$, $i=1,2$. The sets $S_i,~i=1,2$ can be finite or infinite. We further assume that the games in question are zero-sum games, i.e., $P_1=-P_2$. In strategic games, when player $i$ plays, (s)he can decide to play one action in a deterministic fashion---a \emph{pure} strategy---or (s)he can decide to pick a strategy that doesn't assign all mass to one strategy, but rather a probability distribution over the set of strategies $S_i$. The latter is called a \emph{mixed} strategy. We assume that each player randomizes their strategy independently of the other. We consider two types of games here: polynomial games (Section \ref{sec:poly.games}) and polynomial stochastic games (Section \ref{sec:stoch.poly.games}). For each, our goal is to compute---in an efficient manner---mixed strategies for each player, that are \emph{solutions} to the game. In both cases, the solution concept we consider is a Nash equilbrium. We will specify in each section exactly what is meant by this as the nature of the game changes.

\subsection{Polynomial games}\label{sec:poly.games} \index{Polynomial games}
We consider a game where $S_1=[-1,1]$, $S_2=[-1,1]$ and the payoff function of Player~1 is a polynomial in actions $x\in S_1$ and $y \in S_2$:
$$P_1(x,y)=\sum_{i=0}^n  \sum_{j=0}^m p_{ij}x^iy^j.$$
Similarly, the payoff function of Player~2 is a polynomial in actions $x 
\in S_1$ and $y \in S_2$.
As mentioned previously, our goal is to efficiently compute a pair of mixed strategies, also called a \emph{strategy profile}, that is a Nash equilibrium. We use $\Lambda(S)$ to denote the set of probability measures over $S$ and by $\mu \times \nu$ the product measure of two measures $\mu$ and $\nu$.
\begin{definition} \label{def:Nash}
Consider a two-player polynomial game with payoff functions $P_1(x,y),P_2(x,y)$. Let $\mu^*$ (resp. $\nu^*$) be a probability measure over $S_1$ (resp. $S_2$). A mixed strategy profile $(\mu^*, \nu^*)$ is a Nash equilibrium if
	$$E_{\mu^* \times \nu^*}[P_1(x,y)] \geq E_{\mu \times \nu^*}[P_1(x,y)], \forall \mu \in \Lambda(S_1)~~\text{ and }~~E_{\mu^* \times \nu^*}[P_2(x,y)] \geq E_{\mu^* \times \nu}[P_2(x,y)], \forall \nu \in \Lambda(S_2).$$	
\end{definition}
In layman's terms, this means that if one player changes his strategy while the strategy of the other player remains the same, his or her expected payoff cannot increase. Thus, there is no incentive for any one player to unilaterally change his or her strategy.

\begin{remark}
	In general, Nash equilibria do not necessarily exist, though there are a number of games for which they are known to. For example, Nash himself showed that there always exists a Nash equilibrium for games that involve a finite number of players and a finite number of pure strategies. Existence of these equilibria does not guarantee however that they are easy to compute: in fact, it is believed that computing Nash equilibria is a hard problem, even in the simple  2-player finite-strategy setting; see \cite{chen2006settling,daskalakis2009note}. There are a few specific cases where Nash equilbria computation can be done in polynomial time---when a 2-player finite-strategy game is zero-sum for example \cite{dantzig1951proof}. In the two cases considered below, namely polynomial games and polynomial stochastic games, we will see that not only do Nash equilibria exist, but they can be computed using semidefinite programming. 
\end{remark}

We further assume that our game is zero sum, i.e., $P_1=-P_2$. For ease of notation, we consequently drop the subscript and write $P$ instead of $P_1$ (and $-P$ instead of $P_2$). This is the simplest case of a polynomial game, first introduced in \cite{dresher2016polynomial}. From the zero-sum game assumption, the following equivalence is immediate.
\begin{proposition}
	Consider a zero-sum polynomial game with payoff function $P(x,y)$. Let $\mu^*$ (resp. $\nu^*$) be a probability measure over $S_1$ (resp. $S_2$) The strategy profile $(\mu^*,\nu^*)$ is a Nash equilibrium if and only if $(\mu^*,\nu^*)$ satisfies the saddle-point condition
	\begin{align}\label{eq:saddle.point}
	E_{\mu \times \nu^*}[P(x,y)] \leq E_{\mu^* \times \nu^*}[P(x,y)] \leq E_{\mu^* \times \nu}[P(x,y)],~~\forall \mu \in \Lambda(S_1), \forall \nu \in \Lambda(S_2).
	\end{align}
\end{proposition}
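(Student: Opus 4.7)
The plan is to unpack both directions of the equivalence directly from Definition \ref{def:Nash}, invoking only the zero-sum hypothesis $P_1 = -P_2 = P$ and linearity of expectation with respect to scalar multiplication. There is no real subtlety here — the content of the proposition is essentially just a rewriting.

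First I would prove the forward direction. Assume $(\mu^*, \nu^*)$ is a Nash equilibrium. By Definition \ref{def:Nash} applied to Player~1 (whose payoff is $P_1 = P$), we have $E_{\mu^* \times \nu^*}[P(x,y)] \geq E_{\mu \times \nu^*}[P(x,y)]$ for all $\mu \in \Lambda(S_1)$, which is precisely the left inequality in (\ref{eq:saddle.point}). For the right inequality, I would apply Definition \ref{def:Nash} to Player~2 (whose payoff is $P_2 = -P$) to obtain $E_{\mu^* \times \nu^*}[-P(x,y)] \geq E_{\mu^* \times \nu}[-P(x,y)]$ for all $\nu \in \Lambda(S_2)$, then multiply both sides by $-1$ to flip the inequality and recover $E_{\mu^* \times \nu^*}[P(x,y)] \leq E_{\mu^* \times \nu}[P(x,y)]$.

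Conversely, I would assume the saddle-point condition (\ref{eq:saddle.point}) holds. The left inequality gives Player~1's Nash condition verbatim. For Player~2, I would take the right inequality $E_{\mu^* \times \nu^*}[P(x,y)] \leq E_{\mu^* \times \nu}[P(x,y)]$, multiply through by $-1$ (using linearity of expectation in the integrand to pull the sign inside), and substitute $P_2 = -P$ to conclude $E_{\mu^* \times \nu^*}[P_2(x,y)] \geq E_{\mu^* \times \nu}[P_2(x,y)]$ for all $\nu \in \Lambda(S_2)$, matching Player~2's condition in Definition \ref{def:Nash}.

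There is no real obstacle, since every step is either a definitional substitution or an application of the identity $E[-P] = -E[P]$. The only thing worth flagging explicitly in the write-up is that multiplying an expectation by $-1$ reverses the inequality and that this is what interchanges the two inequalities in the saddle-point condition with the two unilateral-deviation inequalities in the Nash definition. The proof would fit comfortably in a short paragraph.
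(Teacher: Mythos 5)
Your proof is correct and is exactly the definitional unpacking that the paper has in mind — indeed, the paper omits the proof entirely, declaring the equivalence ``immediate'' from the zero-sum assumption, and your argument (substituting $P_1 = P$, $P_2 = -P$ into Definition~\ref{def:Nash} and flipping the second inequality via $E[-P] = -E[P]$) is precisely that immediate argument made explicit.
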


We denote by $\mathcal{M}_{1}([-1,1])$ the set of moments which have a representing measure over $[-1,1]$. This is similar to the notation employed in Section \ref{sec:moment.pb} for the global setting. The main theorems in \cite{Pablo_poly_games}, from which this section is inspired, state the following. 

\begin{theorem}\cite[Theorem 2.2-Theorem 2.7]{Pablo_poly_games}
	Consider a zero-sum polynomial game with payoff function $P(x,y)$. There exist mixed strategies $(\mu^*,\nu^*)$ verifying the saddle-point condition (\ref{eq:saddle.point}). Furthermore, these strategies can be obtained by solving the primal-dual pair of optimization problems:
	\begin{equation} \label{eq:primal.dual.pair}
	\begin{aligned}
	&\max_{\mu_0,\ldots,\mu_n, \lambda} &&\lambda \qquad &&&\min_{\nu_0,\ldots,\nu_m, \gamma} &&&& \gamma   \\
	&\text{s.t. } &&\sum_{j=0}^m (\sum_{i=0}^n p_{ij}\mu_i)y^j -\lambda \geq 0, \forall y \in [-1,1] \qquad \text{ and } &&&\text{s.t. } &&&&\gamma - \sum_{i=0}^n (\sum_{j=0}^m p_{ij}\nu_j)x^i \geq 0, \forall x \in [-1,1]\\
	& &&(\mu_0,\ldots,\mu_n) \in \mathcal{M}_{1}([-1,1]),~\mu_0=1 &&&   &&&&(\nu_0,\ldots,\nu_m) \in \mathcal{M}_{1}([-1,1]),~\nu_0=1
	\end{aligned}
	\end{equation}
	\end{theorem}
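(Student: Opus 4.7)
The plan is to exploit the polynomial structure of $P$ to convert an optimization over measures into one over their finite-dimensional moment sequences, then invoke LP duality between the cone of truncated moments on $[-1,1]$ and the dual cone of univariate polynomials nonnegative on $[-1,1]$. First I would observe that since $P(x,y)=\sum_{i,j} p_{ij} x^i y^j$, the independence of $\mu$ and $\nu$ gives
$$E_{\mu \times \nu}[P(x,y)] \;=\; \sum_{i=0}^n \sum_{j=0}^m p_{ij}\, \mu_i \nu_j,$$
where $\mu_i = E_\mu[x^i]$ and $\nu_j = E_\nu[y^j]$. The payoff therefore depends on $\mu,\nu$ only through their truncated moment sequences, so the problem is fully captured by optimizing over $\mathcal{M}_1([-1,1])$ with the normalization $\mu_0=\nu_0=1$.

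Next I would reduce the worst-case inner problem to pure strategies via a standard linearity argument: for any fixed $\mu$, the map $\nu \mapsto E_{\mu\times\nu}[P]$ is linear in $\nu$, so its infimum over $\Lambda([-1,1])$ is attained at a Dirac measure, yielding
$$\inf_{\nu \in \Lambda([-1,1])} E_{\mu\times\nu}[P] \;=\; \inf_{y \in [-1,1]} \sum_{j=0}^m\Bigl(\sum_{i=0}^n p_{ij}\mu_i\Bigr) y^j.$$
Thus Player~1's maxmin problem, after an epigraph reformulation with slack $\lambda$, becomes exactly the primal in (\ref{eq:primal.dual.pair}); the symmetric argument for Player~2 (whose payoff is $-P$) produces the dual side.

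Then I would verify that the two programs in (\ref{eq:primal.dual.pair}) really are conic duals of one another. The polynomial constraint on the left is membership in the cone of polynomials nonnegative on $[-1,1]$, whose dual cone (for the coefficient pairing) is precisely $\mathcal{M}_1([-1,1])$ by the univariate specialization of Corollary \ref{cor:dual.pos} extended to intervals, as discussed in Section \ref{sec:moment.pb}. Pairing the coefficients of $\sum_j(\sum_i p_{ij}\mu_i)y^j-\lambda$ against the vector $(\nu_j)$ and using $\mu_0=\nu_0=1$ collapses the cross-term $\sum_{i,j} p_{ij}\mu_i\nu_j$ and delivers weak duality $\lambda \le \gamma$ immediately.

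The hard part will be strong duality together with joint attainment, from which the Nash equilibrium is extracted. I would argue that $\mathcal{M}_1([-1,1])$, being the image under the moment map of the weak-$*$ compact set of probability measures on the compact interval $[-1,1]$, is itself compact; the objective is linear and continuous, so optimal moment sequences $\mu^*,\nu^*$ exist on both sides, and a standard LP separation/Hahn–Banach argument rules out a duality gap. The univariate truncated moment theorem (Theorem \ref{th:moment1} and its interval version) then lifts $\mu^*,\nu^*$ back to genuine probability measures on $[-1,1]$. Finally, combining primal and dual feasibility with strong duality yields $\gamma^* = \lambda^* = E_{\mu^*\times\nu^*}[P]$, and the two polynomial-nonnegativity constraints translate, upon integration against arbitrary $\nu$ and $\mu$, into the saddle-point inequalities (\ref{eq:saddle.point}), proving both existence of a Nash equilibrium and its computability via (\ref{eq:primal.dual.pair}).
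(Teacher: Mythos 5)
Your derivation follows the same route the paper itself sketches right after the theorem statement: express $E_{\mu\times\nu}[P]$ through truncated moments using independence, reduce the inner optimization over measures to the pointwise nonnegativity constraint on $[-1,1]$ (your Dirac-measure argument and the paper's dualization of the inner minimization are two phrasings of the same step), and pair the two programs via the moment/nonnegative-polynomial duality of Section \ref{sec:moment.pb}. The paper presents this only as intuition and defers the actual proof of existence and strong duality to the cited reference, so your compactness-of-the-normalized-moment-body and no-duality-gap argument is a sensible filling-in of precisely the part the paper omits, and the whole proposal is correct in approach.
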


\begin{remark}
In the previous optimization problems, $(\mu_0,\ldots,\mu_n)$ and $(\nu_0,\ldots,\nu_m)$ are moments of a measure over $[-1,1]$. As the measures considered are over intervals and the polynomials are univariate, the constraints of these problems have a tractable semidefinite representation. We refer the reader to \cite{Pablo_poly_games} for the exact semidefinite programming formulation. From the moments  $(\mu_0,\ldots,\mu_n)$ and $(\nu_0,\ldots,\nu_m)$ obtained, one can extract corresponding \emph{atomic} measures $\mu^*,\nu^*$ over $[-1,1]$, which are measures over a finite number of points. These measures constitute our desired strategies. Again, the extraction process is not covered here but can be found, e.g., in \cite[Section 3.3.5]{PabloGregRekha_BOOK}.
\end{remark}

This theorem gives us both the existence of a Nash equilibrium and the means to compute it efficiently. Before we proceed, we give some intuition as to how these optimization problems arise. It is straightforward to show that for the game at hand, $(\mu^*,\nu^*)$ satisfies the saddle-point condition (\ref{eq:saddle.point}) if and only if 
\begin{align} \label{eq:maximin.th}
\max_{\mu \in \Lambda(S_1)} \min_{\nu \in \Lambda(S_2)} E_{\mu \times \nu} [P(x,y)]= \min_{\nu \in \Lambda(S_2)} \max_{\mu \in \Lambda(S_1)} E_{\mu \times \nu} [P(x,y)].
\end{align}
and $$\mu^* \in \arg \max_{\mu \in \Lambda(S_1)} \min_{\nu \in \Lambda(S_2)} E_{\mu \times \nu} [P(x,y)],\quad \nu^* \in \arg \min_{\nu \in \Lambda(S_2)} \max_{\mu \in \Lambda(S_1)} E_{\mu \times \nu} [P(x,y)].$$
We call the common optimal value the \emph{value} of the game, $\mu^*$ a \emph{maximin} strategy and $\nu^*$ a \emph{minimax} strategy. These strategies are intuitive: as $P(x,y)$ is the payment from Player 2 to Player 1, it follows that Player 1 would try to maximize the payoff in the worst case scenario that Player 2 has played a strategy that minimizes it as much as possible, and conversely. How does this equivalent characterization help us obtain the optimization problems in (\ref{eq:primal.dual.pair})? It suffices to simply rewrite the problems that appear in (\ref{eq:maximin.th}). Consider, for example,
\begin{equation}\label{eq:maximin}
\begin{aligned}
\max_{\mu \in \mu(S_1)} \min_{\nu \in \mu(S_2)} E_{\mu \times \nu} [P(x,y)].
\end{aligned}
\end{equation}
We use the fact that $P(x,y)$ is a polynomial in $x$ and $y$ to express the expectation as a polynomial in its moments. This, in addition to the fact that the strategies are randomized independently, allows us to conclude
$$E_{\mu \times \nu}[P(x,y)]=\sum_{i=0}^n \sum_{j=0}^m p_{ij} E_{\mu \times \nu}[x^iy^j]=\sum_{i=0}^n \sum_{j=0}^m p_{ij} \mu_i \nu_j$$ 
where $$\mu_i=\int_{-1}^1 x^i d\mu(x) \text{ and }\nu_j=\int_{-1}^1 y^j d\nu(y).$$
As a consequence, (\ref{eq:maximin}) can be written as:
\begin{equation*}
\begin{aligned}
&\max_{(\mu_0,\ldots,\mu_n) \in \mathcal{M}_{1,n}([-1,1])} \begin{bmatrix}\min_{(\nu_0,\ldots,\nu_{m}) \in \mathcal{M}_{1,m}([-1,1])} \sum_{i=0}^n \sum_{j=0}^m p_{ij} \mu_i \nu_j \\ \text{s.t. } \nu_0=1 \end{bmatrix}\\
&\text{s.t. } \mu_0=1,
\end{aligned}
\end{equation*}
where $\mathcal{M}_{1,n}([-1,1])$ is once again defined as the set of (truncated) moment sequences that have a representative measure over $[-1,1]$. We now use duality to rewrite the inner problem: note that this problem is very similar to (\ref{eq:pop.primal}) and so its dual will be of a similar form to (\ref{eq:pop.dual}), the only difference being that the measures are over $[-1,1]$. Problem (\ref{eq:maximin}) then becomes:
\begin{equation*}
\begin{aligned}
&\max_{\mu_i, \lambda} &&\lambda\\
&\text{s.t. } &&\sum_{j=0}^m (\sum_{i=0}^n p_{ij}\mu_i)y^j -\lambda \geq 0, \forall y \in [-1,1]\\
& &&(\mu_0,\ldots,\mu_n) \in \mathcal{M}_{1,n}([-1,1]),~\mu_0=1,
\end{aligned}
\end{equation*}
which is the first of the two problems in (\ref{eq:primal.dual.pair}). One can proceed likewise for the second problem.

\begin{example}\label{ex:tinder}
Consider a couple, Danny and Sandy, who have just gone on their first date together. At the end of the date, they agree to message each other within the next two days, which we model by the interval $[-1,1]$. For both of them, the decision of when to send their message is governed by two principles: not appearing too keen but still signaling interest. As a consequence, some scenarios are preferable to others: it is better, e.g., to send a message after the other has sent his or hers rather than before; but too much time shouldn't go by before they contact each other (and if one of them has to send a message before the other, then it is better that it be closer to the end of the date so it can be construed as a polite thank-you message rather than over-keenness).

We model this situation by the following two-player zero-sum polynomial game. Let $x \in [-1,1]$ be the moment at which Danny sends his/her message and $y \in [-1,1]$ be the moment at which Sandy sends his/her message. The payoff function for Danny is given by
\begin{align*}
P(x,y)=-\frac{(x-y)^3}{3}+x-y.
\end{align*}
To observe the behavior described in the previous paragraph, consider Figure \ref{fig:trash}. We have plotted Danny's payoff for different strategies of Sandy's. For example, if Sandy messages Danny right after the date ($y=-1$), then Danny's payoff is maximum if he/she responds one day later. Likewise, if Sandy messages Danny a day after the date ($y=0$), Danny is best off responding two days later. Finally, if Sandy messages Danny a full two days after the date ($y=1$), then Danny is better off messaging Sandy immediately after the date so as to not let too much time ellapse between the date and the first message between them.

%
	
	\begin{figure}[h!]
		\centering
		\subfigure[$y=-1$]{\includegraphics[width = 0.32\textwidth]{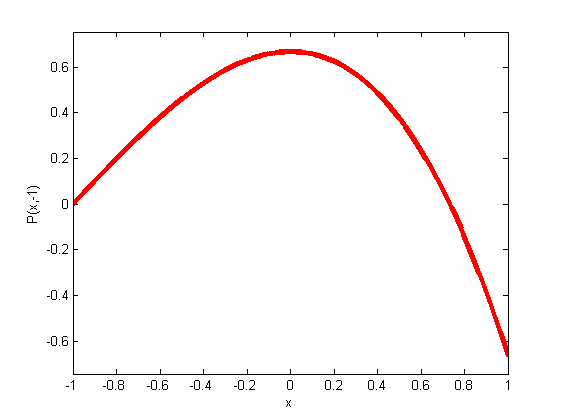}}
		\subfigure[$y=0$]{\includegraphics[width = 0.32\textwidth]{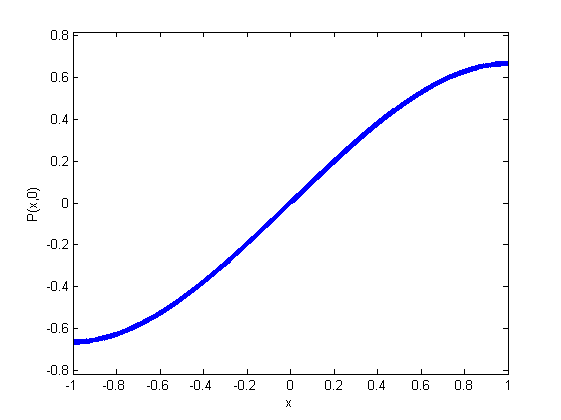}} 
			\subfigure[$y=1$]{\includegraphics[width = 0.32\textwidth]{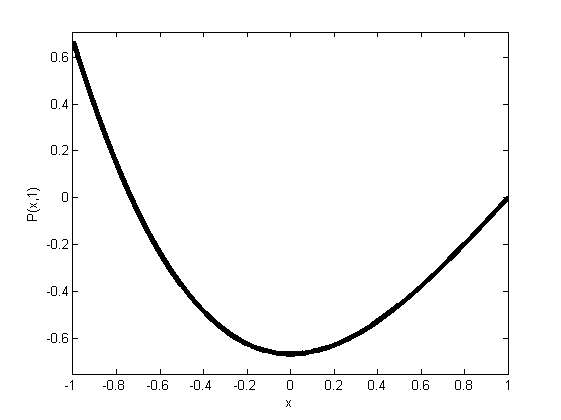}} 
		\caption{Plots of $P(x,y)$ for the game described in Example \ref{ex:tinder} for different values of $y$}
		\label{fig:trash}
	\end{figure}
	
	To find the solution strategies $(\mu^*,\nu^*)$ to this game, we solve the semidefinite programs given in (\ref{eq:primal.dual.pair}) using YALMIP \cite{yalmip} and MATLAB. For the moments of $\mu^*$, we obtain 
	$$(\mu_0^*,\mu_1^*,\mu_2^*,\mu_3^*)=(1,0.1547,0.6906,-0.0718).$$
	This gives rise to the following strategy for Danny:
	message Sandy immediately after the date $33.34\%$ of the time; or message him/her at time $x=0.7321$, $66.66\%$ of the time.
	Likewise, for the moments of $\nu^*$, we obtain
	$$(\nu_0^*,\nu_1^*,\nu_2^*,\nu_3^*)=(1,-0.223,0.7407,-0.3951).$$
	This gives rise to the following strategy for Sandy:
	message Danny immediately after the date $53.34\%$ of the time; or message him/her at time $x=2/3$, $46.66\%$ of the time.
	These two strategies are aligned with the incentives of both players. They either go for the strategy of messaging right after the date: this guarantees that their first communication is not too far from the end of their date but reduces the satisfaction of getting a message before sending one. Or, they message much closer to the end of the two days in the hope that the other person will have messaged them before, though possibly running the risk of seeming uninterested.
%

\end{example} 

To conclude, in the case of two-player zero-sum polynomial games, computing a Nash equilibrium can be done by solving a pair of primal-dual semidefinite programs. This is a nice generalization of two-player zero-sum finite games where computing a Nash equilibrium amounts to solving a pair of primal-dual linear programs. An extension of polynomial games to the multivariate setting can be found in \cite{laraki2012semidefinite}; other extensions to, e.g., different notions of equilibria such as correlated equilibria \cite{stein2011correlated} also exist.

\subsection{Polynomial stochastic games}\label{sec:stoch.poly.games} \index{Polynomial stochastic games}

We consider an interesting extension---called a \emph{polynomial stochastic game} \cite{shah2007polynomial}---of the polynomial game seen in the previous subsection. At a high level, a polynomial stochastic game can be viewed as a repetition of many polynomial games, where the transition from one game to another is dictated by the current game. More specifically, the game is again between two players and is as played as follows. We are given finitely many states $\mathcal{T}=\{1,2,\ldots,T\}$ and a set of actions $S_1$ and $S_2$ for each player. For simplicity, we assume that $S_1=S_2=[0,1]$ but the results in \cite{shah2007polynomial} can be extended to any finite union of intervals on the real line. The players start in an initial state $t$ and move to the next state depending on their current state and the actions of Player 1 \emph{only}. This deviates from the standard stochastic game where the actions of both players play a role in the choice of the next state: this assumption is called the \emph{single-controller assumption}. We define $\pi(t';t,x)$ to be the probability of transitioning from state $t$ to state $t'$ given that Player 1 has taken action $x$ within $S_1$. 

We assume that the game runs over an infinite horizon generating sequences of states $(t_1,\ldots,t_k,\ldots)$ and sequences of actions $(x_1,\ldots,x_k,\ldots)$ and $(y_1,\ldots,y_k,\ldots)$ from both players. In this context, the payoff of Player 1 for the whole game is given by:
$$\sum_{k=1}^\infty \beta^k P(t_k,x_k,y_k),$$
where $\beta<1$ is a discount factor and $P(t_k,x_k,y_k)$ is the payoff received by Player 1 when the players are in state $t_k$ and actions $x_k \in S_1$ and $y_k \in S_2$ are played. As the game is polynomial, we assume that the payoff of Player 1 is a polynomial in the actions $x \in S_1$ of Player 1 and $y \in S_2$ of Player 2, i.e.,
$$P(t,x,y)=\sum_{i=0}^n \sum_{j=0}^m p_{ij}(t) x^i y^j.$$
Here, $p_{ij}(t)$ is a polynomial in $t$, with $n$ being the degree of $P$ in $x$, and $m$ in $y$. As the game is zero sum, the payoff of Player 2 is simply the negative of the payoff of Player 1. We will also assume that the transition probability $\pi(t';t,x)$ is polynomial in $x$ for every fixed integers $t$ and $t'$:
$$\pi(t',t,x)=\sum_{i=0}^n \pi_{i,t',t} x^i.$$

The full set of strategies for this game is very cumbersome: indeed, the strategy that should be played at time $k$ depends a priori on the past sequence of states and actions as well as the current state. As a consequence, we will restrict ourselves to searching for optimal \emph{stationary} strategies, which are strategies that only depend on the players' current state. This means that instead of searching over an infinite set of probability distributions, we are able to search over a finite set of probability distributions. (We will see that this restriction doesn't lead to any loss as an optimal stationary distribution always exists within the solution concept that we define.) More specifically, we wish to find, for Player 1, a vector of (mixed) strategies $M\mathrel{\mathop{:}}=(\mu^1,\ldots,\mu^T)$, where $\mu^i$ is a probability distribution over $S_1$ for $i=1,\ldots,T$ and, for Player 2, a vector of (mixed) strategies $N\mathrel{\mathop{:}}=(\nu^1,\ldots,\nu^T)$, where $\nu^i$ is a probability distribution over $S_2$ for $i=1,\ldots,T$. Given the stationarity assumption, we can rewrite the payoff of Player 1 for the whole game in a simpler way. To do this, we introduce a $T \times T$ probability transition matrix which depends on Player's 1 strategy only as previously mentioned:
$$\Pi_{tt'}(M)=\int_{S_1} \pi(t',t,x) d\mu^t(x).$$
Furthermore, the average payoff for Player 1 obtained for being at state $t$ with Player 1 playing strategy $\mu^t$ and Player 2 playing strategy $\nu^t$ is given by
$$E_{\mu^t \times \nu^t}[P(s,x,y)]=\int_{S_1} \int_{S_2} P(t,x,y)d\mu^t(x) d \nu^t(y).$$
It follows then that the average overall payoff $v_{\beta}(t,M,N)$ for Player 1, for fixed strategies $M$, $N$, discount factor $\beta$, and starting position $t$ is the solution of the set of equations:
$$v_{\beta}(t,M,N)=E_{\mu^t \times \nu^t}[P(t,x,y)]+\beta \sum_{t' \in \mathcal{T}} \Pi_{tt'} \cdot v_{\beta}(t',M,N),~ t=1,\ldots,T.$$
The average overall payoff for Player 2 for a game starting at $t$ is given by $-v_{\beta}(t,M,N)$ as the game is zero-sum.
Once again, we would like to compute solutions $(M^*,N^*)$ to the game that are a Nash equilibrium. Our definition of a Nash equilibrium is slightly different to the one given above as we are considering vectors or probability distributions instead of one probability distribution. We use $S_1^T$ to denote the set $\underbrace{S_1 \times S_1 \times \ldots S_1}_{T \text{ times}}$ and similarly for $S_2$.

\begin{definition}
	Consider a two-player zero-sum polynomial stochastic game with states $(1,\ldots,T)$, sets of actions $S_1$ and $S_2$, discount factor $\beta$, transition probability $\pi(t',t',x)$, and payoff function $P(t,x,y)$. Let $M^*$ (resp. $N^*$) be vectors of probability measures of length $T$ over $S_1$ (resp. $S_2$). The pair $(M^*, N^*)$ is a Nash equilibrium if
	\begin{align}\label{eq:saddle.point.stoch}
	v_{\beta}(t,M,N^*) \leq v_{\beta}(t,M^*,N^*) \leq v_{\beta}(t,M^*,N), \forall t=1,\ldots,T,~\forall M \in \Lambda(S_1^T),N \in \Lambda(S_2^T).
	\end{align}
	\end{definition}
The characterization of a Nash equilibrium that we have given is in fact the saddle-point condition. This can be immediately translated to the condition that 
$$v_{\beta}(t,M,N^*) \leq v_{\beta}(t,M^*,N^*) \text{ and } -v_{\beta}(t,M^*,N^*) \geq -v_{\beta}(t,M^*,N)$$ which is of similar flavor to the definition of a Nash equilibrium that we gave in the previous subsection. The latter condition can be interpreted as: regardless of the state at which the game starts, there is no incentive for one player to deviate from his or her strategy if the other player does not.

We denote by $\nu_j^t$ (resp. $\mu_i^{t}$) the $j^{th}$ (resp. $i^{th}$) moment of $\nu^t$ (resp. $\mu^t$). The main results in \cite{shah2007polynomial} can be expressed as follows.

\begin{theorem}
Consider a two-player zero-sum polynomial stochastic game with states $(1,\ldots,T)$, sets of actions $S_1$ and $S_2$, discount factor $\beta$, transition probability $\pi(t',t',x)$, and payoff function $P(t,x,y)$. There exist optimal vectors of mixed strategies $(M^*,N^*)$ verifying the saddle-point condition (\ref{eq:saddle.point.stoch}). Furthermore, these optimal strategies can be obtained by solving the primal-dual pair of optimization problems:
	\begin{equation}\label{eq:stoch.primal}
	\begin{aligned}
	&\min_{v \in \mathbb{R}^T, \{(\nu^t_0,\ldots,\nu^t_m),t=1,\ldots,T\}} &&\sum_{t=1}^T v_t\\
	&\text{s.t. } && v_t \geq \sum_{i=0}^n \sum_{j=0}^m p_{ij}(t) \nu_j(t) x^i +\beta \sum_{t' \in \mathcal{T}} \left(\sum_{i=0}^n \pi_{i,t,t'} x^i\right)v_t', \forall x \in [0,1], \forall t=1,\ldots,T\\
	& &&(\nu_0^t,\ldots,\nu_m^t) \in \mathcal{M}_{1}([0,1]), \nu_0^t=1, \forall t=1,\ldots,T.
	\end{aligned}
	\end{equation}
		\begin{equation} \label{eq:stoch.dual}
	\begin{aligned}
	&\max_{\alpha \in \mathbb{R}^T, \{(\xi_0^t,\ldots,\xi_n^t),t=1,\ldots,T\}} &&\sum_{t=1}^T \alpha_t\\
	&\text{s.t. } && \sum_{i=0}^n \sum_{j=0}^m p_{ij}(t) \xi_i^t y^j \geq \alpha_t,\forall y \in [0,1], \forall t=1,\ldots,T,\\
	& &&(\xi_0^t,\ldots,\xi_n^t) \in \mathcal{M}_{1}([0,1]), \forall t=1,\ldots,T,\\
	& &&\sum_{t=1}^T \left( \xi_0^t-\beta \sum_{i=0}^n \pi_{i,t,t'} \xi_i^t \right) =1, \forall t'=1,\ldots,T.
	\end{aligned}
	\end{equation}
\end{theorem}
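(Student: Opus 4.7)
The plan is to embed the stochastic game into a Shapley-type dynamic programming framework and reduce each one-shot sub-game to a zero-sum polynomial game of the type treated in Section~\ref{sec:poly.games}. The key structural observation is that, since the transition kernel $\pi(t',t,x)$ depends only on Player~1's action and is polynomial in $x$, the one-step continuation payoff
\begin{equation*}
Q(t,x,y;v) \mathrel{\mathop{:}}= P(t,x,y) + \beta \sum_{t'=1}^T \pi(t',t,x)\,v_{t'}
\end{equation*}
is itself a polynomial in $(x,y)$ for any fixed $v \in \mathbb{R}^T$. Thus, for each $v$ and each $t$, the one-shot game with payoff $Q(t,\cdot,\cdot;v)$ falls exactly under the polynomial-game setting of Section~\ref{sec:poly.games}, so existence of a Nash equilibrium together with its moment-based primal-dual SDP characterization is already available.

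First, I would establish existence of a value vector $v^* \in \mathbb{R}^T$ satisfying Shapley's equation
\begin{equation*}
v_t^* = \min_{\nu \in \Lambda(S_2)} \max_{\mu \in \Lambda(S_1)} E_{\mu \times \nu}[Q(t,x,y;v^*)] = \max_{\mu \in \Lambda(S_1)} \min_{\nu \in \Lambda(S_2)} E_{\mu \times \nu}[Q(t,x,y;v^*)].
\end{equation*}
The middle equality, for each fixed $v$, is exactly the minimax theorem for polynomial games proved in Section~\ref{sec:poly.games} applied to the payoff $Q(\cdot;v)$. Existence of $v^*$ then follows from a Banach contraction argument: the Shapley operator defined by the right-hand side is a $\beta$-contraction in the sup-norm on $\mathbb{R}^T$, so it admits a unique fixed point.

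Next, I would extract stationary equilibrium strategies. For each $t$, let $(\mu^{*t},\nu^{*t})$ be a Nash equilibrium of the one-shot polynomial game with payoff $Q(t,\cdot,\cdot;v^*)$, which exists by Section~\ref{sec:poly.games}. Set $M^* = (\mu^{*1},\ldots,\mu^{*T})$ and $N^* = (\nu^{*1},\ldots,\nu^{*T})$. A direct verification using the Bellman-type recursion for $v_\beta(t,M,N)$ shows $v_\beta(t,M^*,N^*) = v_t^*$ and that (\ref{eq:saddle.point.stoch}) holds: iterating the one-shot saddle-point inequality at $v^*$ yields $v_\beta(t,M,N^*) \leq v_t^*$ for any $M$, with the other direction symmetric.

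Finally, I would identify (\ref{eq:stoch.primal}) and (\ref{eq:stoch.dual}) as the moment-based reformulations of the two sides of Shapley's equation. For the primal, the epigraph form of $\min_{\nu^t}\{v_t : v_t \geq \max_{x \in [0,1]} E_{\nu^t}[Q(t,x,y;v)]\}$ becomes, after expanding $E_{\nu^t}[P(t,x,y)]$ in terms of the moments $\nu_j^t$ exactly as in Section~\ref{sec:poly.games} and enforcing $\nu^t \in \mathcal{M}_{1}([0,1])$ with $\nu_0^t=1$, precisely the constraint in (\ref{eq:stoch.primal}); minimizing $\sum_t v_t$ selects the fixed point coordinate-wise. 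The dual (\ref{eq:stoch.dual}) arises from standard conic Lagrangian duality, with the coupling equation $\sum_t(\xi_0^t - \beta \sum_i \pi_{i,t,t'}\xi_i^t) = 1$ playing the role of the discounted occupation-measure equation induced by Player~1's strategy. The main obstacle will be strong duality. Weak duality follows by testing the primal inequality against a dual-feasible moment sequence, summing over $t$, and using the coupling equation so that the $\beta$-discounted transition terms telescope into $\sum_t \alpha_t \leq \sum_t v_t$. Zero gap is then certified by the pair $(M^*,N^*)$, whose moments are simultaneously primal- and dual-feasible and both produce objective value $\sum_t v_t^*$. Tractability as semidefinite programs follows from the SDP representations of $\mathcal{M}_{1}([0,1])$ and of univariate nonnegativity on $[0,1]$ applied to the resulting polynomial inequalities in $x$ and $y$.
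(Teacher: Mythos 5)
First, a point of comparison: the paper does not actually prove this theorem --- it states it and defers entirely to \cite{shah2007polynomial} (``We refer the reader to \cite{shah2007polynomial} for a proof\ldots''), so there is no in-text argument to measure yours against. On its own merits, your outline follows the route one would expect and that the cited reference takes: Shapley's fixed-point equation for the discounted value vector, with each one-shot game at a fixed continuation value $v$ handled by the zero-sum polynomial-game machinery of Section \ref{sec:poly.games}, and the resulting value/strategy characterization lifted to the pair (\ref{eq:stoch.primal})--(\ref{eq:stoch.dual}). One thing you should make explicit rather than leave implicit is the true role of the single-controller assumption. It is not merely that $Q(t,x,y;v)$ is polynomial --- that would hold anyway. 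It is that, because $\pi$ does not depend on $y$, the continuation term $\beta\sum_{t'}\pi(t'|t,x)v_{t'}$ contains no expectation against $\nu^t$, so the constraint of (\ref{eq:stoch.primal}) is \emph{jointly linear} in the decision variables $(v,\nu^1,\ldots,\nu^T)$ for each fixed $x$. Were $\pi$ to depend on $y$, products $\nu^t_j v_{t'}$ would appear and the formulation would cease to be a conic program. This is the entire reason a single convex program can capture the equilibrium, and it deserves a sentence.

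Two steps of your sketch need repair before it is a proof. First, ``minimizing $\sum_t v_t$ selects the fixed point coordinate-wise'' is not immediate, because the constraints couple the $v_t$ across states through the transition term: you must argue that feasibility, i.e.\ $v \geq \Phi_{\nu}(v)$ componentwise for the monotone $\beta$-contraction $\Phi_\nu(v)_t = \max_{x}\{E_{\nu^t}[P(t,x,\cdot)] + \beta\sum_{t'}\pi(t'|t,x)v_{t'}\}$, forces $v \geq \Phi_\nu^k(v) \to v(\nu)$ (the value vector when Player 2 commits to the stationary strategy $\nu$ and Player 1 best-responds), and then that $v(\nu) \geq v^*$ componentwise; only then does minimizing the \emph{sum} pin down $v^*$ in every coordinate simultaneously, with equality attained at $(\nu^*, v^*)$. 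Second, in your strong-duality step the dual certificate is \emph{not} ``the moments of $M^*$'': note that (\ref{eq:stoch.dual}) imposes no normalization $\xi_0^t = 1$, and the coupling constraint fixes the total mass of the $\xi^t$ at a $\beta$-dependent value. The correct dual-feasible point is the discounted occupation measure, $\xi^t = w_t\cdot(\text{moments of }\mu^{*t})$ with $w_t$ the expected discounted visitation weight of state $t$ under the Markov chain that $M^*$ induces --- this is exactly why the paper remarks, right after the theorem, that the $\xi^{t*}$ must be divided by $\xi_0^{t*}$ before a strategy can be read off. With those two points filled in, and the telescoping weak-duality computation carried out against the coupling constraint as you indicate, the argument is sound.
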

We refer the reader to \cite{shah2007polynomial} for a proof of why these optimization problems give rise to mixed strategies that verify the saddle point-condition. 
It is easy to see that both problems can be rewritten as semidefinite programs as the measures involved are over an interval and the polynomials involved are univariate. To obtain the optimal strategy $N^*$ from (\ref{eq:stoch.primal}), it suffices to recover the representing measure $\nu^{t*}$ from the optimal moments $(\nu^{t*}_0,\ldots, \nu^{t*}_m)$ for all $t$; see \cite[Section 3.3]{PabloGregRekha_BOOK}. For $M^*$, the moments $(\xi^{t*}_0,\ldots,\xi^{t*}_n)$ first need to be normalized so as to correspond to a probability measure: this just involves dividing by $\xi^{t*}_0$. Then the same method that is applied to recover $N^*$ can be used to recover $M^*$.

\part{Conclusion: a word on implementation challenges} \index{Parsers} \index{Solvers} \index{Scalability challenges}

A topic that is central to applications but that we have barely touched upon so far is how to implement these methods in practice. In particular, what software should we use to solve sum of squares programs? There are two components here to consider: which semidefinite programming solver to use and which parser to use which converts the sum of squares program to a semidefinite program. Indeed, in theory, one could manually convert the sum of squares program at hand into a semidefinite program, but this can be quite tedious. It is consequently much more convenient to use a parser whose role is to automate this process. Note however that not all parsers interface with all solvers, and that one need sometimes access parsers or solvers within other software or interfaces (e.g., MATLAB). 

We start by reviewing a few semidefinite programming solvers (the list is by no means meant to be exhaustive). Choosing a good-quality solver is a crucial step in coming up with robust solutions to the problem at hand in a reasonable amount of time. MOSEK \cite{mosek}, SDPT3 \cite{SDPT3}, Sedumi \cite{sedumi}, and SDPA \cite{yamashita2003implementation} are established solvers, with the first being a commercial solver (free with an academic license), and the latter three being free. SDPA interfaces with Python, MOSEK interfaces with C, Java, Python, and MATLAB, and Sedumi and SDPT3 interface with MATLAB. All of these solvers rely on interior point methods, which unfortunately do not always scale very well with the size of the problem. In light of this, new solvers have been developed which rely on augmented Lagrangian methods instead, such as SDPNAL/SDPNAL+ \cite{yang2015sdpnal}, CDCS \cite{zheng2017chordal}, and SCS \cite{o2016conic}. These are all free. The first uses Newton-Conjugate Gradient augmented Lagrangian, whereas the latter two use ADMM. Furthermore SDPNAL/SDPNAL+ and CDCS can be accessed from MATLAB, whereas SCS is written in C and can be used in other C, C++, Python environments, as well as in MATLAB, R and Julia. 

In terms of parsers, the most commonly used are perhaps YALMIP \cite{yalmip}, Julia \cite{bezanson2017julia}, SOSTOOLS \cite{sostools}, SPOT \cite{SPOT_Megretski}, Gloptipoly \cite{henrion2009gloptipoly}, and Macaulay2 \cite{M2}. YALMIP is a toolbox for modelling and optimization in MATLAB that has a special sos module. It can be interfaced with many solvers including MOSEK, SDPT3 and Sedumi. Julia is an open-source programming language for technical computing. Optimization is done via its modeling package JuMP \cite{dunning2017jump} and sum of squares problems can be tackled via one of its packages SumofSquares.jl \cite{brackston2018construction}. SOSTOOLS is a free MATLAB toolbox for sos programs. It can be interfaced with a number of solvers including SeDuMi, SDPT3, CSDP, SDPNAL, SDPNAL+, CDCS and SDPA. SPOT can be viewed as an alternative to SOSTOOLS as it is also a MATLAB toolbox for sos programs, but its focus is mainly towards control theory. GloptiPoly is a free MATLAB toolbox, which solves what is known as the \emph{generalized moment problem}. This includes the moment problem as described in Section \ref{sec:moment.pb}, and hence can be used to tackle polynomial optimization problems. It can be interfaced with many different solvers including Sedumi, SDPT3 and MOSEK. Finally, Macaulay2 is a free computer algebra system geared towards research in algebraic geometry. However, via a package, it can be used to solve sum of squares programs. 

As mentioned above, a direction currently taken in solver development involves replacing interior point methods by methods that can robustly solve very large problems, such as ADMM \cite{boyd2011distributed}. This is due to the fact that the semidefinite programs generated by sum of squares programs are very large: as an example, if the polynomials considered in an sos program are of degree $2d$ and in $n$ variables, then the associated semidefinite program will be of order $n^d$. This limited ability to solve very large sos programs has been one of the main impediments in further disseminating sum of squares techniques. Indeed, possible new applications often feature problems of large scale. This has consequently led to a flurry of research around the question: how can we make solving sos programs more scalable? One such step of course is to construct new solvers for semidefinite programs that rely on more scalable algorithms as we saw above. Another research direction, complementary to this one, is to leverage the structure of the semidefinite program at hand to reduce its size. Structures of interest can include e.g. symmetries in the problem or sparsity; see \cite{Symmetry_groups_Gatermann_Pablo,vallentin2009symmetry,waki2006sums} for some of these directions. A very different research direction involves replacing the semidefinite program at hand by cheaper conic programs with trade-offs in accuracy; see \cite{ahmadi2017dsos,weisser2018sparse} for some examples of this direction. The hope is that by combining these different research directions, one will be able to tackle large-scale sos programs and open up many new areas to the use of sos programming.

\section*{Acknowledgments} 
The author would like to thank Amir Ali Ahmadi for painstakingly going over an initial version of this manuscript to provide the author with feedback. His very relevant comments considerably improved this draft. The author would also like to thank Jeffrey Zhang for the example in equation (\ref{eq:CE.Jeff}).

\bibliographystyle{amsalpha}
\bibliography{ams_lecture_notes}
%
%
%
%

\end{document}